\newcommand{\rr}{\mathbb{R}}
\newcommand{\cc}{\mathbb{C}}
\newcommand{\nn}{\mathbb{N}}
\newcommand{\TT}[1]{\mathcal{T} \left( #1 \right)}
\newcommand{\overbar}[1]{\mkern 1.5mu\overline{\mkern-1.5mu#1\mkern-1.5mu}\mkern 1.5mu}
\newcommand{\norm}[1]{\left|\left| #1 \right|\right|}
\newcommand{\abs}[1]{\left| #1 \right|}
\newcommand{\setof}[1]{\left\{#1\right\}}
\newtheorem{prop}[theorem]{Proposition}
\begin{document}

\title{Homoclinic dynamics in a restricted four body problem
\thanks{The second author was partially supported by NSF grant 
DMS-1813501.
Both authors were partially supported by NSF grant DMS-1700154 
and by  the Alfred P. Sloan Foundation grant G-2016-7320}
}
%

\subtitle{transverse connections for the saddle-focus equilibrium solution set}


\author{Shane Kepley         \and
        J.D. Mireles James 
}


\institute{S. Kepley \at
              Rutgers University, Department of Mathematics \\
              \email{sk2011@math.rutgers.edu}           
           \and
           J.D. Mireles James \at
              Florida Atlantic University, Department of Mathematical Sciences \\
              \email{jmirelesjames@fau.edu}
}

\date{Received: date / Accepted: date}

\maketitle

\begin{abstract}
We describe a method for computing an atlas for the stable or unstable 
manifold attached to an equilibrium point, and implement the method
for the saddle-focus libration points of the planar equilateral restricted 
four body problem.  
We employ the method at the maximally symmetric case of equal masses,
where we compute atlases for both the stable and unstable manifolds. 
The resulting atlases are comprised of thousands of 
individual chart maps, with each chart represented by a two variable 
Taylor polynomial. 
Post-processing the atlas data yields approximate intersections of the 
invariant manifolds, which we refine via a shooting method for an appropriate 
two point boundary value problem.  Finally, we apply numerical continuation to 
some of the BVP problems.  
This breaks the symmetries and leads to connecting orbits  
for some non-equal values of the primary masses.  
\keywords{Gravitational $4$- body problem \and invariant manifolds
\and high-order Taylor methods \and automatic differentiation 
\and numerical continuation}
\PACS{45.50.Jf	 \and 45.50.Pk \and 45.10.-b \and 02.60.Lj \and 05.45.Ac}
\subclass{70K44 \and 34C45 \and 70F15}
\end{abstract}



%
%
%

\section{Introduction} \label{intro}
Illuminating studies by Darwin, Str\"{o}mgren, and Moulton in the first decades of the Twentieth 
Century established the importance 
of numerical calculations in the qualitative theory of Hamiltonian systems  
\cite{MR1554890,stromgrenRef,moultonBook}.  In particular their work 
gave new insights into the orbit structure of 
the circular restricted three body problem (CRTBP), 
a problem already immortalized by Poincar\'{e}.
Interest in the CRTBP was reinvigorated in the 1960's with the inauguration of the space race and 
a number of authors including Szebehely, Nacozy, and Flandern
\cite{szebehelyOnStromgren,szebehelyTriangularPoints} 
harnessed the newly available power of digital computing to 
settle some questions raised by Str\"{o}mgren.
The interested reader will find a delightful retelling of this story
with many additional references in the book of Szebhely \cite{theoryOfOrbits}.

Motivated by the works just mentioned,  in 1973 Henrard proved a theorem settling a 
conjecture of Str\"{o}mgren about the role of asymptotic orbits. 
 More precisely, Henrard showed that the existence of a transverse homoclinic for a
saddle-focus equilibrium in a two degrees of freedom Hamiltonian system implies the existence of a  
tube of periodic orbits parameterized by energy and accumulating to the homoclinic \cite{MR0365628}. 
In the same paper he showed that the period of the orbits in the family goes to infinity
and their stability changes infinitely often as they accumulate to the homoclinic.  
This phenomena was called the \textit{blue sky catastrophe} by Abraham
\cite{MR813508} and has been studied by a number of authors including 
L.P. Shilnikov, A.L. Shilnikov, and Turaev \cite{MR3253906}, Devaney
\cite{MR0431274}. 

In 1976 it was further shown by Devaney that such a transverse homoclinic --
again for a saddle-focus in a two degrees of freedom Hamiltonian system -- implies the existence 
of  chaotic dynamics in the energy level of the equilibrium \cite{MR0442990}.  
See also the works of Lerman \cite{MR998368,MR1135905}.
Such theorems should be thought of as Hamiltonian versions of the 
homoclinic bifurcations studied by Shil\'{n}ikov \cite{MR0259275,0025-5734-10-1-A07,MR0210987}.
Taken together the results cited so far paint a vivid picture of the rich dynamics 
near a transverse homoclinic connection in a two degree of freedom Hamiltonian system.

The present study concerns asymptotic orbits in the planar equilateral restricted four 
body problem, henceforth referred to as the circular restricted four body problem
(CRFBP).  The problem has a rich literature dating at least back to the work 
of Pedersen \cite{pedersen1,pedersen2}.
Detailed numerical studies of the equilibrium set, as well as
the planar and spatial Hill's regions are found in Sim\'{o} \cite{MR510556}, in 
 Baltagiannis and Papadakis \cite{MR2845212}, and in
\'{A}lvarez-Ram\'{i}erz and Vidal \cite{MR2596303}.
Mathematically rigorous theorems about the equilibrium set and 
its bifurcations are  proven by Leandro and Barros in 
\cite{MR2232439,MR2784870,MR3176322} (with computer assistance).
They show that for any value of the masses there are either 8, 9, 
or 10 equilibrium solutions with 6 outside the equilateral triangle 
formed by the primary bodies (see Figure \ref{rotatingframe}).

Fundamental families of periodic orbits are considered by 
Papadakis in \cite{MR3571218,MR3500916},
and by Burgos-Garc\'{i}a, Bengochea,  and Delgado in 
\cite{burgosTwoEqualMasses,MR3715396}.
A study by Burgos-Garc\'{i}a, Lessard, and Mireles James proves the existence 
of some spatial periodic orbits for the CRFBP \cite{jpJaimeAndMe}
(again with computer assistance).
An associated Hill's problem is derived and its periodic orbits are studied 
by Burgos-Garc\'{i}a and Gidea in \cite{MR3554377,MR3346723}.

Regularization of collisions are studied by \'{A}lvarez-Ram\'{i}rez, Delgado, and Vidal in 
\cite{MR3239345}.  Chaotic motions were studied numerically by Gidea and Burgos in \cite{MR2013214}, 
and by \'{A}lvarez-Ram\'{i}rez and Barrab\'{e}s in \cite{MR3304062}.
Perturbative proofs of the existence of chaotic motions are
found in the work of She, Cheng and Li
\cite{MR3626383,MR3158025,MR3038224},
and also in the work of 
Alvarez-Ram\'\i rez,  Garc\'i{a},  Palaci\'{a}n,  and Yanguas
\cite{chaosCRFBP}.
Blue sky catastrophes in the CRFBP were previously studied by 
Burgos-Garc\'{i}a and Delgado in \cite{MR3105958}, and by 
Kepley and Mireles James in \cite{shaneAndJay}. This last reference 
develops (computer assisted) methods of proof for 
verifying the hypotheses of the theorems of Hernard and Devaney.

The main goal of the present work is to study
orbits which are homoclinic to a saddle-focus equilibrium solution
in the equilateral restricted four body problem.
We apply the parameterization method of Cabr\'{e}, Fontich, and de la Llave
to compute a chart for the stable or unstable manifold in a neighborhood of the equilibrium
\cite{MR1976079,MR1976080,MR2177465}.
Then, we implement the analytic continuation scheme for local invariant manifolds
developed by Kalies, Kepley, and Mireles James in 
\cite{manifoldPaper1}, where it was applied to some two dimensional 
manifolds in the Lorenz system. 
We adapt this scheme for the CRFBP, and compute atlases for the
local stable/unstable manifolds attached to a saddle-focus equilibrium. 
By an atlas we mean a collection of analytic maps or {\em charts} of the form, $P \colon [-1, 1]^2 \to \mathbb{R}^4$, where the image of $P$ lies in the stable or unstable manifold. The union of these charts is a piecewise approximation for a large portion of the manifold away from the equilibrium. For a more formal definition see any standard text on differential geometry. The charts are computed using high order polynomial approximations with algorithms that
exploit automatic manipulations of formal series.  

After computing the stable/unstable manifold atlases we post-process 
to find approximate intersections.
Once a potential intersection is 
located we refine the approximation using a Newton scheme for a
two point boundary value problem as in the classical work of 
Doedel, Friedman, and Kunin \cite{MR1007358,MR1456497}. In the case of the CRFBP, our algorithm identifies a large collection of connecting orbits which are naturally ordered by connection time. We focus on the maximally symmetric case of equal masses, which we refer to as \textit{the triple Copenhagen problem}. We prove that a rotational symmetry in this case reduces the complexity of the atlas computations by a factor of $3$.

The algorithm for producing the atlases utilizes an adaptive subdivision routine to carefully control errors. 
This results in a large number of charts, on the order of tens of thousands, in only a few minutes of computation time. 
These computations are expensive in terms of memory usage, and it is impractical to recompute the atlases for a large number of parameter values, at least given the resources of the present study; namely laptop/desktop computers running single threads. 
Instead, after computing an ensemble of connecting orbits
for the triple Copenhagen problem, we apply numerical continuation 
to the boundary value problem describing the homoclinics.  
That is, we use the connections 
found for the equal mass case as a jumping off point for 
exploring nearby -- but non-symmetric -- mass parameters. 
Continuation of the connecting orbits is much more efficient than continuing the 
entire invariant manifold atlas.

As is well known, the bifurcation structure of the homoclinic continuation problem 
in the Hamiltonian setting is rich. We do not attempt 
automatic tracking of new branches, nor do we follow folds.
A more systematic study of the branching would make an excellent topic for future 
study, perhaps by combining our invariant manifold atlas data with powerful continuation software such as AUTO \cite{MR1404122}.

We emphasize that our restriction to the equal masses case is due to convenience and is not a technical restriction 
on the method itself.  Our atlas algorithm applies to any choice of parameters or even to other Hamiltonian systems.  Thus, even though we abandon the branch whenever the homoclinic continuation algorithm fails, we always have the ability to dig deeper into the cause of failure by running the full atlas computation from scratch.

We remark that our method is deployed in the full phase space,
and does not require choosing a fixed surface of section 
in which to study intersections of the invariant manifolds.
This is advantageous as many problems do not admit
a single section for which the return map
is topologically conjugate to the true dynamics.
Considering the intersections of the stable/unstable manifolds
in a particular section may not reveal all the connecting orbits.
Moreover, the first intersections to appear in phase space may 
not be the first to appear in a given section.  Indeed, projecting to a 
section can introduce discontinuities which make it impossible to 
precisely formulate notions like ``first intersection''.  
The great virtue of a surface of section (restricted to an 
energy level) is that it leads -- at least in the case of a two degree of freedom 
Hamiltonian -- to a two dimensional representation of the dynamics.
We remark that the methods of the present work generalize to 
systems with three or more degrees of freedom, where considering
surfaces of section is less fruitful.

\begin{figure}[!t]
\centering
\includegraphics[width=3.5in]{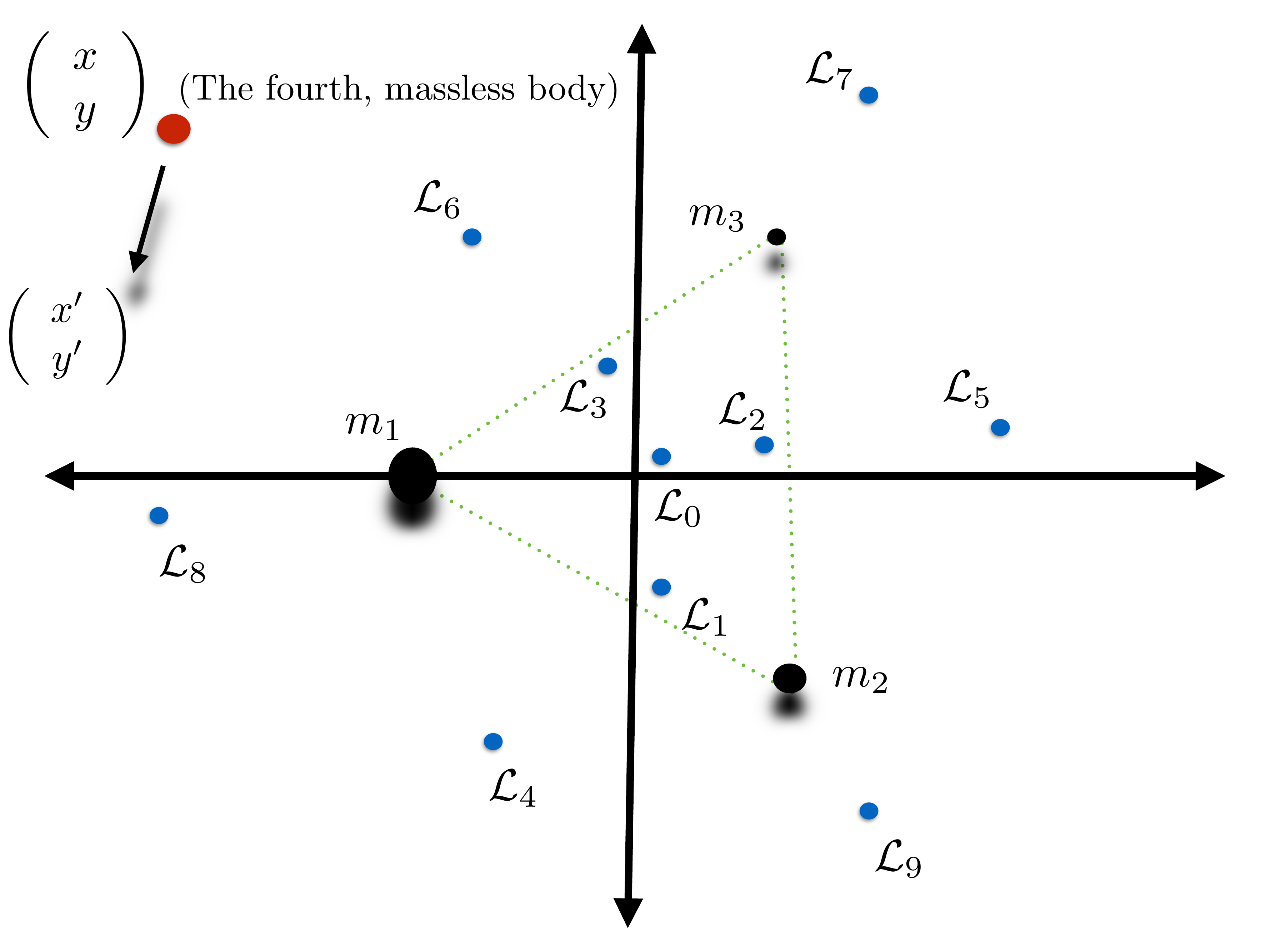}
\caption{Configuration space for the CRFBP: The three 
primary bodies with masses $m_1,m_2,$ and $m_3$ are 
arranged in an equilateral triangle configuration of 
Lagrange, which is  a relative equilibrium solution of the 
three body problem.    After transforming to a
co-rotating frame, we consider the motion of a fourth
massless body.  The equations of motion have 
$8$, $9$, or $10$ equilibrium solutions
(libration points) denoted by $\mathcal{L}_j$
for $0 \leq j \leq 9$.  The number of libration points,
and their stability, varies depending on $m_1$, $m_2$, and $m_3$. In this work we study the points, $\mathcal{L}_{0, 4,5,6}$, which are the only libration points which can have saddle-focus stability.
}\label{rotatingframe}
\end{figure}

\begin{figure}[!t]
\centering
\includegraphics[width=6in]{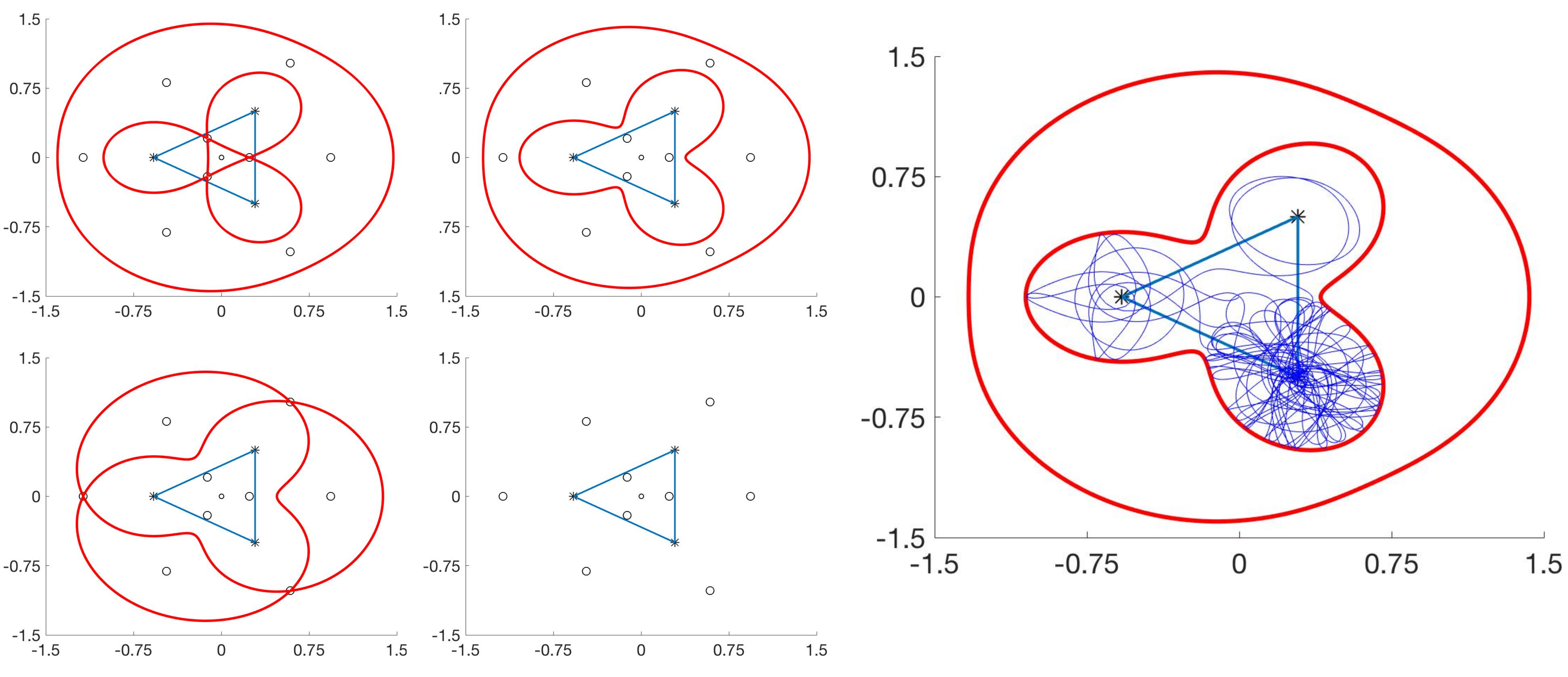}
\caption{Zero velocity curves for the triple Copenhagen problem:
Fixing a value of the Jacobi constant and setting velocity equal in 
Equation  \eqref{eq:CRFB_energy} implicitly defines the
zero velocity curves in the phase space of the CRFBP.  
An orbit which reaches one of these curves arrives with zero velocity, 
and hence turns around immediately.  These define 
natural boundaries which which orbits at a given energy level may not
cross.  Left: the zero velocity curves associated with the energy levels
of $\mathcal{L}_{1,2,3}$ (top left) $\mathcal{L}_0$ (top right), 
$\mathcal{L}_{4,5,6}$ (bottom left), and $\mathcal{L}_{7,8,9}$.
Right: a typical orbit in the $\mathcal{L}_0$ energy 
level confined by the zero velocity curves.  
}\label{fig:zvCurves}
\end{figure}

\begin{figure}[!t]
\centering
\includegraphics[width=5.5in]{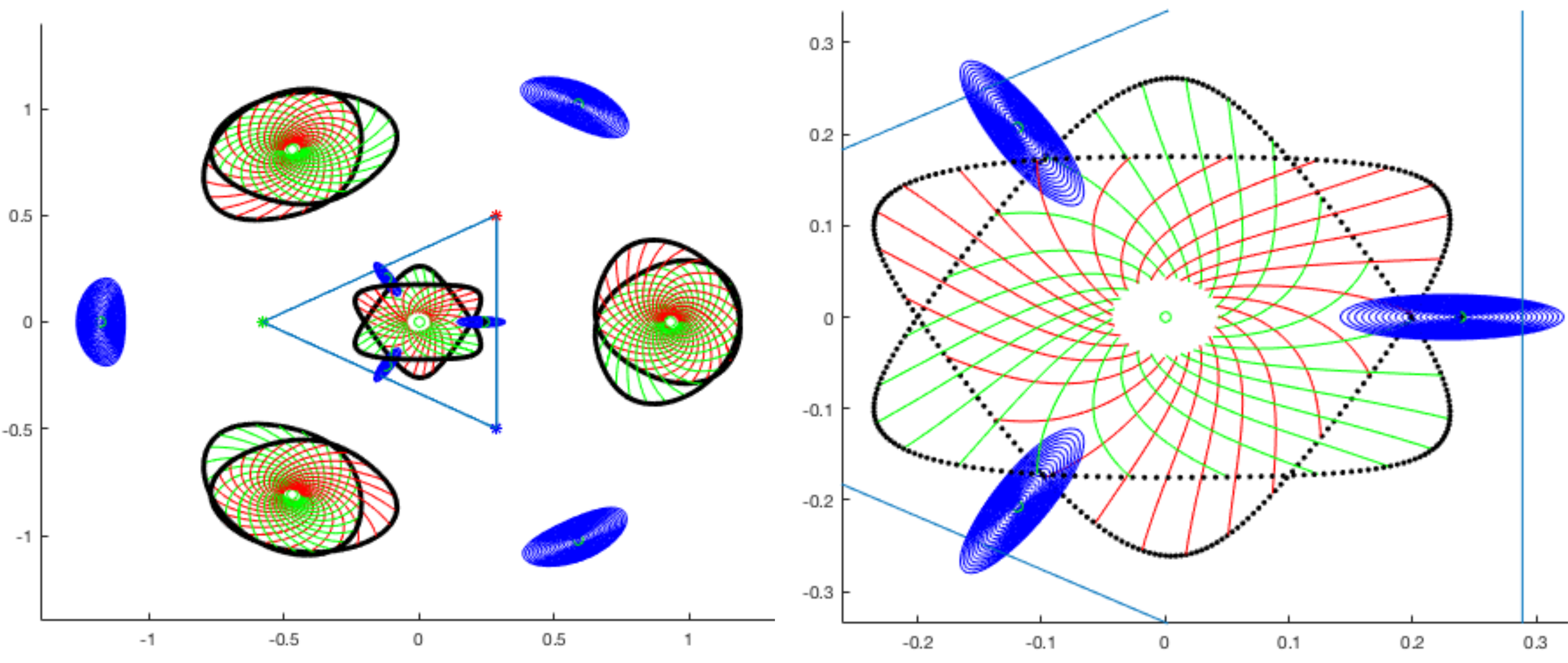}
\caption{Two dimensional local invariant manifolds in the triple Copenhagen problem
(CRFBP with equal masses):
Left: all two dimensional attached invariant manifolds for libration points in the 
equal mass case (one dimensional manifolds not shown).  
In the case of equal masses, the libration points 
$\mathcal{L}_{0,4,5,6}$ have saddle-focus stability.  
Orbits are shown accumulating to the libration points in forward/backward 
time (green/red respectively).  The libration points $\mathcal{L}_{4,5,6,7,8,9}$ 
on the other hand have saddle $\times$
center stability.  In this case each libration point has an attached center manifold
foliated by periodic orbits -- the so called planar Lyapunov orbits.  
We make no systematic study the Lyapunov orbits in the present 
work, and only remark that they appear to organize some of the  
homoclinic orbits in the discussion to follow.    
Right: closeup on the inner libration points and their invariant manifolds.
All references to color refer to the online version.
}\label{fig:localManifolds}
\end{figure}


\section{Saddle-focus equilibrium solutions of the equilateral CRFBP} \label{sec:saddleFocii}
In this section we review well known results about the set of equilibrium solutions in the 
CRFBP, focusing on material which informs the calculations carried out 
in the remainder of the work.  We are especially interested in 
the number and location of saddle-foci, and in how these depend on the mass ratios.   
First we recall the mathematical formulation of the problem, and some of its 
elementary properties.


\subsection{The planar equilateral circular restricted four body problem} \label{sec:RTBP}

Consider three particles with masses $0 < m_3 \leq m_2 \leq m_1 < 1$,
normalized so that 
\[
m_1 + m_2 + m_3 = 1.
\]
These massive particles are referred to as the ``primaries''.
Suppose that the primaries are located at the vertices of a planar equilateral triangle, 
rotating with constant angular velocity.  That is, we assume that the three massive 
bodies are in the triangular configuration of Lagrange.   
We choose a co-rotating coordinate frame which puts the triangle in the $xy$-plane and 
fixes the center of mass at the origin.  We orient the triangle so that 
the first primary is on the negative $x$-axis, the second 
body is in the lower right quadrant, and the smallest body is in the upper right quadrant.
Once in co-rotating coordinates, we 
are interested in the dynamics of a fourth, massless particle with coordinates $(x,y)$, moving in the gravitational field of the primaries.   The situation is illustrated in Figure \ref{rotatingframe}.

We write $(x_1, y_1)$, $(x_2, y_2)$ and $(x_3, y_3)$
to denote the 
locations of the primary masses.  
Let
\[
K = m_2(m_3 - m_2) + m_1(m_2 + 2 m_3).
\]
Taking into account the normalizations discussed above, 
the precise positions of the primary bodies are given by 
the formulas    
\[
x_1 =   \frac{-|K| \sqrt{m_2^2 + m_2 m_3 + m_3^2}}{K},  
\quad \quad \quad \quad 
y_1 =   0,  
\]
\[
x_2 =  \frac{|K|\left[(m_2 - m_3) m_3 + m_1 (2 m_2 + m_3)  \right]}{
2 K \sqrt{m_2^2 + m_2 m_3 + m_3^2} }  
\quad \quad 
y_2  =  \frac{-\sqrt{3} m_3}{2 m_2^{3/2}} \sqrt{\frac{m_2^3}{m_2^2 + m_2 m_3 + m_3^2}}
\]
\[
x_3 =  \frac{|K|}{2 \sqrt{m_2^2 + m_2 m_3 + m_3^2}},
\quad \quad  \quad 
y_3 =  \frac{\sqrt{3}}{2 \sqrt{m_2}} \sqrt{\frac{m_2^3}{m_2^2 + m_2 m_3 + m_3^2}}.
\]
Define the potential function 
\begin{equation} \label{eq:CRFBP_potential}
\Omega(x,y) :=
\frac{1}{2} (x^2 + y^2) + \frac{m_1}{r_1(x,y)} + \frac{m_2}{r_2(x,y)} + \frac{m_3}{r_3(x,y)}, 
\end{equation}
where 
\begin{equation} \label{eq:def_r}
r_j(x,y) := \sqrt{(x-x_j)^2 + (y-y_j)^2},  \quad \quad \quad j = 1,2,3,
\end{equation}
and let $\mathbf{x} = (x, \dot x, y, \dot y) \in \mathbb{R}^4$ denote the
state of the system.   
The equations of motion in the rotating frame are 
\[
\mathbf{x}' = f(\mathbf{x}),
\]
where
\begin{equation}\label{eq:SCRFBP}
 f(x, \dot x, y, \dot y) := 
\left(
\begin{array}{c}
\dot x \\
2 \dot y + \Omega_x(x, y) \\
\dot y \\
-2 \dot x + \Omega_y(x, y) \\
\end{array}
\right).
\end{equation}
The system conserves the quantity   
\begin{eqnarray} \label{eq:CRFB_energy}
E(x, \dot x, y, \dot y) &= 
-\left( {\dot x}^2 + {\dot y}^2 \right) + 2\Omega(x,y),
 \label{eq:energy}
\end{eqnarray}
which is called the \textit{Jacobi integral}.   
Note that $E$ is smooth  -- in fact real analytic -- away from the 
primaries.  The zero velocity curves are defined by fixing a 
value of the energy and setting $\dot x, \dot y$ to zero.
These curves are useful for understanding the structure of the 
phase space and are illustrated in Figure \ref{fig:zvCurves}.

As mentioned in the introduction,
the CRFBP has exactly $8$, $9$ or $10$ equilibrium solutions, depending 
on the values of the mass parameters $m_1, m_2,$ and $m_3$.   The
equilibria are referred to as \textit{libration points} in the dynamical astronomy
literature, and we denote them
by $\mathcal{L}_j$ for $0 \leq j \leq 9$.  A typical 
configuration of these libration points is illustrated in 
 Figure \ref{rotatingframe}, which also illustrates out naming 
 convention.  In the present work we are interested in the linear
 stability of the libration points. We are especially interested in 
determining the mass ratios where $\mathcal{L}_j$ 
 with $j = 0, 4, 5, 6$ are saddle-focus -- as opposed to 
 real saddle or center $\times$ center -- equilibria.
 This question is considered from a numerical point of view
in Section \ref{sec:whereAreTheSaddleFoci?}.
  
We note that for all values of the masses, $\mathcal{L}_j$
with $j = 1, 2, 3, 7, 8, 9$ have either saddle $\times$ center,
or center $\times$ center stability depending on the values of 
the masses
The local two dimensional invariant manifolds attached to all ten libration points 
are illustrated in Figure \ref{fig:localManifolds}, for the case 
of equal masses.

\begin{figure}[!t]
\centering
\includegraphics[width=4.5in,height=4.5in]{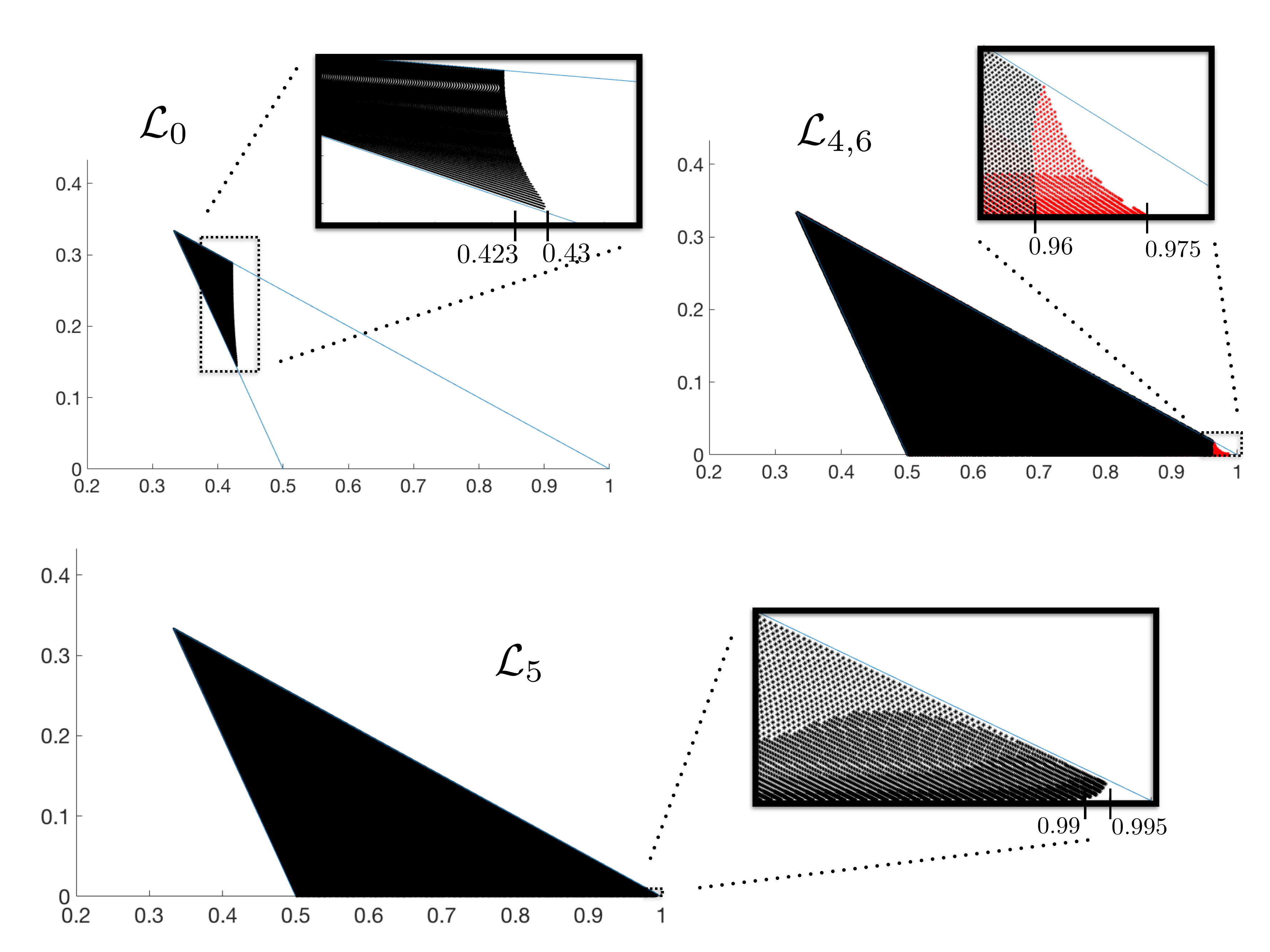}
\caption{Mass values and saddle-focus stability: 
results of a numerical search of the parameter space. Values of $m_1$ are on the 
horizontal axis and values of $m_3$ are on the vertical axis.  These determine 
the remaining mass parameter through the relation $m_2 = 1 - m_1 - m_3$.
In each frame a parameter pair is marked with a black or red dot
 if the libration point $\mathcal{L}_{0,4,5,6}$
has saddle-focus stability.  The top left figure reports the results for $\mathcal{L}_0$, 
the top right for $\mathcal{L}_{4,6}$ and the bottom frame is $\mathcal{L}_5$.
In each case the inlay zooms in on the 
Routh-Gascheau bifurcation curve.
Note that these bifurcation curves are nonlinear, and that 
in the top right results for $\mathcal{L}_4$  are black and results for 
$\mathcal{L}_6$ are red. We remark that the changes in the dot pattern in the bottom right inlay
is due to the use of an adaptive step size in our continuation algorithm.}\label{fig:sadFocParms}
\end{figure}

\subsection{Saddle foci in parameter space} \label{sec:whereAreTheSaddleFoci?}
The CRFBP admits as many as four and 
as few as zero saddle-focus equilibrium points, depending on the 
mass ratios.  We now consider briefly what happens in between these extremes
as the masses are varied.   
The problem is normalized so that $m_1 + m_2 + m_3 = 1$, 
with $m_3 \leq m_2 \leq m_1$, so we have that 
$m_1 \in [1/3, 1]$, $m_2 \in [0, 1/2]$ and $m_3 \in [0, 1/3]$.
Considering the 2-simplex in $\mathbb{R}^3$ satisfying these
constraints, we see that when 
$m_1 \in [1/3, 1/2]$ we have 
\[
m_3 \in \left[-2m_1 + 1, \frac{-1}{2}m_1 +\frac{1}{2} \right],
\]
while for $m_1 \in [1/2, 1]$ we have 
\[
m_3 \in \left[0, \frac{-1}{2}m_1 +\frac{1}{2} \right].
\]
In either case, once we choose $m_1$ and $m_3$, the value of $m_2$ is 
determined by   
\[
m_2 = 1 - m_1 - m_3.
\]

The question is, how does the stability of the libration points depend on the 
mass ratios?  We address the question for each of the points, $\mathcal{L}_{0,4,5,6}$, as follows. Beginning with 
the case of equal masses, $m_1 = m_2 = m_3 = 1/3$, 
we numerically continue each equilibrium to the 
opposite boundary of the parameter simplex at $m_3 = 0$.
Throughout the computation we track the stability of each libration point and label
a parameter point with a black dot whenever the 
stability is of saddle-focus type. The results 
are summarized in Figure \ref{fig:sadFocParms}.
We refer to the curve in the parameter simplex where the stability 
changes as the Routh-Gascheau curve.

Roughly speaking,  we see that when $\nicefrac{1}{3} \leq m_1 \leq 0.42$ the libration point
$\mathcal{L}_0$ is a saddle-focus for all allowable values of $m_2$, $m_3$.  
When $m_1 > 0.43$, the libration point $\mathcal{L}_0$ is no
longer a saddle, no matter the values of $m_2$, $m_3$. 
The points $\mathcal{L}_{4, 6}$ on the other hand 
have saddle-focus stability for most parameter values, and only bifurcate 
after $m_1 > 0.95$ (with $\mathcal{L}_6$ a little more robust than 
$\mathcal{L}_4$ except when $m_2 = m_3$).  The libration point
$\mathcal{L}_5$ is the most robust.  It maintains saddle-focus stability 
until $m_1 \approx 0.99$.  For $m_1 > 0.995$ there are no more saddle 
foci at all.  By reading parameter values off of the frames in Figure 
\ref{fig:sadFocParms} we can arrange that the CRFBP has
 $1, 2, 3$ or $4$ saddle-focus equilibria.  In the sequel we are interested 
 in homoclinic connections for such parameters.


\subsection{Two ways to formulate a connecting orbit: phase space geometry and 
boundary value problems} \label{sec:twoViewsOfConnections}
There are two standard ways to think about connecting 
orbits and -- while they are completely equivalent from a mathematical point of view --
in practice they have different advantages and disadvantages.  
In the following let $f \colon \mathbb{R}^n \to \mathbb{R}^n$ denote
a smooth vector field and let $\mathbf{x}_0 \in \mathbb{R}^n$ be an equilibrium 
solution for $f$.  We write $W^s(\mathbf{x}_0)$ and $W^u(\mathbf{x}_0)$ to denote respectively 
the stable and unstable manifolds attached to $\mathbf{x}_0$.
\begin{itemize}
\item \textbf{Analytic definition:} If $\mathbf{x} \colon \rr \to \mathbb{R}^n$
satisfies
\[
\frac{d}{dt} \mathbf{x}(t) = f(\mathbf{x}(t)),
\]
for all $t \in \mathbb{R}$, and satisfies the asymptotic boundary conditions 
\[
\lim_{t \to \pm \infty} \mathbf{x}(t) = \mathbf{x}_0,
\]
then we say that $\mathbf{x}$ is a homoclinic connecting orbit
for $\mathbf{x}_0$.  
\item \textbf{Geometric definition:} If 
\[
\hat x \in W^s(\mathbf{x}_0) \cap W^u(\mathbf{x}_0),
\]
and  $\mathbf{x} = \mbox{orbit}(\hat x)$ denotes the orbit which passes through $\hat x$, 
then $\mathbf x$ is a homoclinic connecting orbit for $\mathbf{x}_0$.  If the intersection 
of the manifolds is transverse then 
we say that $\mathbf{x}$ is a transverse homoclinic connection.
\end{itemize}

The analytic definition is recast as a finite time boundary value problem 
by projecting the boundary conditions onto local stable/unstable manifolds.
If $P, Q$ are parameterizations of the local unstable and stable manifolds 
respectively, then we look for $T > 0$ and $\mathbf{x} \colon [0, T] \to \mathbb{R}^n$,
so that $\mathbf{x}$ solves the differential equation subject to the boundary 
conditions 
\[
\mathbf{x}(0) \in \mbox{image}(P), 
\qquad \mbox{and} \qquad 
\mathbf{x}(T) \in \mbox{image}(Q).
\] 
In applications one frequently replaces $P$ and $Q$ by their linear approximations.
In Section \ref{sec:invariantManifolds} we review an approach called 
\textit{the parameterization method} for computing high order 
polynomial approximations of the local charts $P, Q$. 

%
%

\begin{remark}[Relative strengths and weaknesses] \label{rem:connectionFormulations}
One great advantage of the analytic formulation is that, since it is equivalent to a two 
point boundary value problem, we can utilize the Newton-Method to find very 
accurate solutions -- often on the order of machine precision.  
The formulation as a boundary value problem also lends itself to numerical continuation
schemes, which are very useful for exploring the parameter space.
The disadvantages are twofold.  First, in this formulation it is necessary to begin the Newton 
iteration with a fairly good approximate solution and  this raises the question: \textit{Where do the approximate
solutions come from?} Second, it is difficult to rule out solutions using the BVP approach.

In the geometric approach there is no need to make a guess.  Instead, one 
moves along the stable and unstable manifolds, and identifies connections by locating intersections in phase space.  At the same time, the geometric approach allows one to rule out connecting orbits by showing that a particular region of phase space does not contain any intersections.  The difficulty with the geometric perspective is that it provides information only as good as 
our knowledge of the embeddings of the stable/unstable manifolds.  Computing embeddings
of invariant manifolds is challenging, and methods tend to decrease in accuracy the farther from the 
equilibrium they are applied.

The important point, from the perspective of the present work, is that 
these two approaches complement one another.  The geometric formulation 
is good for locating and ruling out connections while the analytic formulation is 
good for refining approximations and for continuation with respect to
parameters.  This suggests the approach of the present work: namely that we use
the two formulations in concert, playing the strengths of one against the 
weaknesses of the other as appropriate. 

We remark that in many applications it is convenient to examine the intersections of
the invariant manifolds in an intermediate surface of section. 
This is especially true for two degree of freedom systems as the section intersected with 
the energy level leads to a two dimensional image which is easy to visualize.  
Often an appropriate section is suggested by the geometry of the problem, or by 
the goals of a particular space mission.  We refer the interested reader to the works
\cite{MR1765636,MR2163533,MR2557453} for examples and fuller discussion.
\end{remark}

%
%

\section{Numerical computation of the stable/unstable manifolds} 
\label{sec:invariantManifolds}
The results of Section \ref{sec:whereAreTheSaddleFoci?} show that 
\textit{for most parameter values}, the CRFBP has either three or four 
saddle-focus equilibria -- though for some parameters it may have only two,
or one, or none.
For a given saddle-focus equilibrium with fixed values of the mass parameters, 
we compute the invariant manifolds in two steps.  
First we find a high order expansion of an initial local chart 
containing the equilibrium solution.  Then we use a high 
order Taylor integration scheme to advect 
the boundary of the initial chart one sub-arc at a time. 
The second step is repeated until a certain integration time has been reached, 
or until some error tolerance has been exceeded.  
Along the way it is sometimes necessary to subdivide boundary 
arcs in order to manage the truncation errors.  

Our computation of the initial chart employs the parameterization method, 
which is reviewed in Section \ref{sec:parmMethod}.
Advection of the boundary uses a Taylor integration scheme 
similar to the one developed in \cite{manifoldPaper1},
but adapted to the problem at hand.  
Both procedures exploit differential-algebraic manipulations 
of formal power series, and these manipulations
are delicate due to the presence of the 
minus two thirds power in the nonlinearity of the CRFBP vector field.

One technique for manipulating power series of several complex variables 
involves \textit{automatic differentiation} combined with the radial-gradient.
This procedure is developed in 
 \cite{mamotreto}, and is reviewed in Appendix \ref{sec:formalSeries}.
 Another technique involves appending additional variables and equations to 
 the problem, so that the enlarged field is polynomial and equivalent to 
 the original CRFBP on a certain sub-manifold.  
 This option is discussed at length for the CRFBP in \cite{shaneAndJay} which also includes a more precise definition of what ``equivalent'' means here.
 See also  \cite{fourierAutomaticDiff}, and \cite{MR0135589}.

\subsection{Parameterization method for the local invariant manifold} \label{sec:parmMethod}
We now review the parameterization method adapted to the 
needs of the present work, namely for a stable/unstable manifold 
attached to  a saddle-focus equilibrium in $\mathbb{R}^4$.  
Much more general treatment of the parameterization method is found in 
\cite{MR1976079,MR1976080,MR2177465}.
See also the book on this topic \cite{mamotreto}.

Let $\mathbf{x}_0 \in \mathbb{R}^4$ denote a saddle-focus equilibrium point. Specifically, we suppose $f(\mathbf{x}_0) = 0$, 
\[
\lambda_{1,2} = -\alpha \pm i \beta, 
\]
with $\alpha, \beta > 0$ denotes the stable eigenvalues for $Df(\mathbf{x}_0)$, 
and $\xi_{1,2} \in \mathbb{C}^4$ denotes a choice of associated
complex conjugate eigenvectors.   

Since the eigenvalues are complex, it is convenient to 
look for a complex parameterization of a local stable manifold.  
Let 
\[
D^2 = \left\{ (z_1, z_2) \in \mathbb{C}^2 \, : \, |z_j| < 1, \ j = 1,2 \right\}
\]
denote the unit complex polydisc. 
We look for a parameterization $P \colon D^2 \to \mathbb{C}^4$
satisfying the infinitesimal 
conjugacy given by 
\begin{equation} \label{eq:invEq}
D P(\mathbf{z}) \Lambda \mathbf{z} = f(P(\mathbf{z})), 
\end{equation}
where $\mathbf{z} = (z_1, z_2)^{\mbox{T}}$, and 
\[
\Lambda = \left(
\begin{array}{cc}
\lambda_1 & 0 \\
0 & \lambda_2
\end{array}
\right).
\]
Equation 
\eqref{eq:invEq} is 
subject to the first order constraints 
\begin{equation} \label{eq:constraints}
P(0, 0) = \mathbf{x}_0, 
\quad \quad \quad \mbox{and} \quad \quad \quad 
\frac{\partial}{\partial z_{1,2}} P(0, 0) = \xi_{1,2}.
\end{equation}
Note that 
\[
DP(\mathbf{z}) \Lambda \mathbf{z} = 
\lambda_1 z_1 \frac{\partial}{\partial z_1} P(z_1, z_2) +
\lambda_2 z_2 \frac{\partial}{\partial z_2} P(z_1, z_2),
\]
is the push forward of the linear vector field by $P$.
The geometric meaning of Equation \eqref{eq:invEq} is illustrated in 
Figure \ref{fig:parmSchematic}.


Let 
$\Phi$ denote the flow generated by $f$.
Any $P$ satisfying Equation \eqref{eq:invEq}
on $D^2$ also satisfies the flow conjugacy
\begin{equation} \label{eq:flowConj}
\Phi(P(z_1, z_2), t) = P(e^{\lambda_1 t} z_1, e^{\lambda_2 t} z_2), 
\quad \quad \quad (z_1, z_2) \in D^2.
\end{equation}
In particular, if $P$ satisfies both Equation \eqref{eq:invEq}
and the constraints of Equation \eqref{eq:constraints}, then 
for any $(z_1, z_2) \in D^2$ it follows that 
\begin{align*}
\lim_{t \to \infty} \Phi(P(z_1, z_2), t)  &=  \lim_{t \to \infty} P(e^{\lambda_1t} z_1, e^{\lambda_2 t} z_2) \\
&= P(0, 0) \\
&= \mathbf{x}_0,
\end{align*}
so that $P(D^2) \subset W^s(\mathbf{x}_0)$.  
Combining this with the fact that the image of $P$ contains 
$\mathbf{x}_0$ and is tangent to the stable eigenspace at $\mathbf{x}_0$ we 
see that $P$ parameterizes a local stable manifold for $\mathbf{x}_0$.  
Moreover we recover the dynamics on the manifold through the conjugacy.  

When the vector field $f$ is analytic near $\mathbf{x}_0$, then $W^{s}(\mathbf{x}_0)$ is an analytic 
manifold, and it makes sense to look for an analytic chart of the form
\[
P(z_1, z_2) = \sum_{m=0}^\infty \sum_{n=0}^\infty p_{m,n} z_1^m z_2^n,
\]
with $p_{m,n} \in \mathbb{C}^4$ for all $m,n \in \mathbb{N}$.
Since we are interested in the real image of the chart
we look for a solution of Equation 
\eqref{eq:invEq}  with 
\[
P(z, \bar z) \in \mathbb{R}^4,
\]
for all $|z| < 1$.  This is achieved whenever the power series 
coefficients of the solution satisfy
\begin{equation} \label{eq:complexConj}
p_{n,m} = \overline{p_{m,n}},
\end{equation}
for all $(m,n) \in \mathbb{N}^2$.  
The real parameterization $\tilde{P} \colon B \to \mathbb{R}^4$ is 
recovered using complex conjugate variables 
\[
\tilde{P}(\sigma_1, \sigma_2) = P(\sigma_1 + i \sigma_2, \sigma_1 - i \sigma_2).
\]
Elementary proofs of the facts discussed in this 
section are found for example in \cite{shaneAndJay}.

\begin{figure}[!t]
\centering
\includegraphics[width=4in]{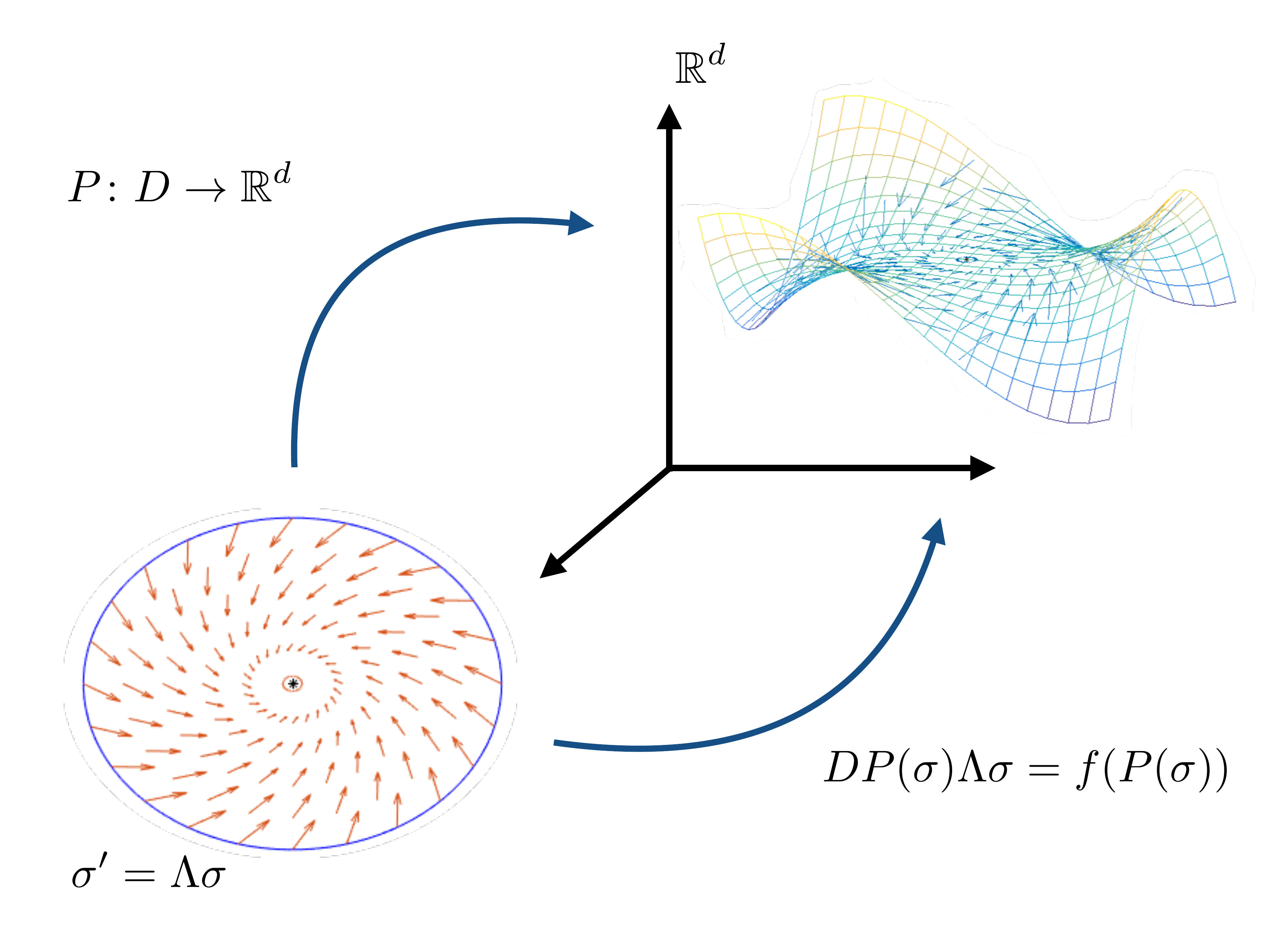}
\caption{\textbf{Geometric interpretation of the parameterization method for differential equations:} 
Equation \eqref{eq:invEq} requires that the push forward of 
the vector field $\Lambda$ by $P$ matches the vector field $f$ 
on the image of $P$. A function satisfying this equation is a 
parameterization of a local stable manifold.
}\label{fig:parmSchematic}
\end{figure}

\subsection{Power series solution of Equation \eqref{eq:invEq}} \label{sec:solveInvEq}
We describe three methods for computing the power series 
coefficients of an analytic solution of the invariance equation
given in Section \ref{sec:parmMethod}.  
Combining these methods leads to very efficient numerical methods.  

\subsubsection{Solution by power matching} \label{sec:powerMatch} Plugging the unknown power series 
expansion for $P$ into Equation \eqref{eq:invEq} leads to 
\[
\sum_{m=0}^\infty \sum_{n=0}^\infty (m \lambda_1 + n \lambda_2) p_{m,n} z_1^m z_2^n
= \sum_{m=0}^\infty \sum_{n=0}^\infty [f \circ P]_{m,n}  z_1^m z_2^n.
\]
It is shown in \cite{MR1976079} (see also the discussion in \cite{mamotreto})
that when we match like powers and isolate $p_{m,n}$ we
are led to an expression of the form 
\begin{align*}
(m \lambda_1 + n \lambda_2) p_{m,n} &= [f \circ P]_{m,n} \\
&= Df(p_{0,0}) p_{m,n} + R(P)_{m,n},
\end{align*}
where  $R(P)_{m,n}$ depends in a nonlinear way on   
coefficients $p_{j,k}$ with $ 0  \leq j+k < m+n$.
Isolating the variable $p_{m,n}$ on the left 
leads to the \textit{homological equations} 
\begin{equation}\label{eq:homEq}
\left[ Df(\mathbf{x}_0) - (m \lambda_1 + n \lambda_2) \mbox{Id} \right] p_{m,n} = -R(P)_{m,n}.
\end{equation}

\begin{remark}[The formal solution is well defined] \label{rem:wellDefined}. 
Observe that Equation \eqref{eq:homEq} is linear in $p_{m,n}$ and has a 
unique solution as long as $m \lambda_1 + n \lambda_2$ is not an 
eigenvalue of $Df(\mathbf{x}_0)$.  But $\lambda_2 = \overline{\lambda_1}$, and since any 
remaining eigenvalues are assumed to be unstable, 
we have that $m \lambda_1 + n \lambda_2$ is never an 
eigenvalue of $Df(\mathbf{x}_0)$.  Hence
 the matrix on the left hand side of the homological equation \eqref{eq:homEq}
is invertible for all $m + n \geq 2$.  

Given any first order data
as in the constraint Equations \eqref{eq:constraints},
the homological equations are uniquely solvable to all orders and
the corresponding formal series solution of Equation \eqref{eq:invEq} is well defined.
Since each Taylor coefficient is uniquely 
determined by the homological equations \eqref{eq:homEq}, 
it follows that   
the formal series solution is unique up to the choice of the scalings of the eigenvectors
in Equation \eqref{eq:constraints}.  
Solving the homological equations recursively 
to order $N \geq 2$ provides a polynomial chart $P^N$ which 
approximately parameterizes the local stable manifold.
\end{remark}    
\begin{remark}[Reality of the parameterization]  \label{rem:realityOfParm}
Taking complex conjugates in  
the homological equations \eqref{eq:homEq} shows that
the coefficients $p_{m,n}$ have the  
symmetry of Equation \eqref{eq:complexConj}.
\end{remark}

\subsubsection{A Newton scheme} \label{sec:ParmNewton} 
A quadratic convergence scheme for Equation \eqref{eq:invEq}
is obtained as follows. Define the nonlinear operator 
\[
\Psi[P](\sigma) = DP(\sigma) \Lambda \sigma - f(P(\sigma)),
\]
where $f$ is the CRFBP vector field, and note that a zero of $\Psi$
is a solution of Equation \eqref{eq:invEq}. Moreover we note
that, at least formally, the Fr\'{e}chet derivative is given by  
\[
D\Psi[P] H (\sigma) = DH(\sigma) \Lambda \sigma - Df(P(\sigma)) H(\sigma). 
\]
In fact this is the correct Fr\'{e}chet derivative of $\Psi$ when for example
we consider $\Psi$ defined on a Banach space of analytic functions, see
\cite{MR1976079,MR2177465,MR2966749}.

Choose $P_0$ an approximate zero of $\Psi$, and define the sequence
\[
P_{n+1} = P_n + \Delta_n,
\]
where $\Delta_n$ is the formal series solution of the linear equation 
\begin{equation} \label{eq:linear}
D\Psi[P] \Delta = - \Psi[P].
\end{equation}
If $P_0$ is a good enough approximate solution of Equation \eqref{eq:invEq}
we expect $P_n$ to converge quadratically to a zero of $\Psi$.
The linear operator $D \Psi[P]$ non-constant coefficient, 
and Equation \eqref{eq:linear} may be solved recursively via the following power matching 
scheme.
Define
\[
\Delta(\sigma_1, \sigma_2) = \sum_{m=0}^\infty \sum_{n=0}^\infty \Delta_{m,n} \sigma_1^m \sigma_2^n, 
\]
\[
Df(P(\sigma)) = \sum_{m=0}^\infty \sum_{n=0}^\infty A_{m,n} \sigma_1^m \sigma_2^n,
\]
and 
\[
-\Psi(P(\sigma)) =  \sum_{m=0}^\infty \sum_{n=0}^\infty q_{m,n} \sigma_1^m \sigma_2^n.
\]
Here $\Delta_{m,n}, q_{m,n} \in \mathbb{C}^4$, and $A_{mn}$ are $4 \times 4$ complex valued 
matrices for all $(m,n) \in \mathbb{N}^2$.
Plugging these series expansions into Equation \eqref{eq:linear} leads to 
\[
\sum_{m+n \geq 2} \left( (m \lambda_1 + n \lambda_2) \Delta_{m,n} -
 \sum_{j=0}^m \sum_{k=0}^n A_{m-j, n-k} \Delta_{j,k} \right) \sigma_1^m \sigma_2^n = 
 \sum_{m+n \geq 2} q_{m,n} \sigma_1^m \sigma_2^n,
\]
or, upon matching like powers, 
\[
(m \lambda_1 + n \lambda_2) \Delta_{m,n} -
 \sum_{j=0}^m \sum_{k=0}^n A_{m-j, n-k} \Delta_{j,k} = q_{m,n},
\]
for all $m+n \geq 2$.
We note that the sum contains one term of order $\Delta_{mn}$, 
appearing when $j = m$ and $k = n$.  That is
\[
 \sum_{j=0}^m \sum_{k=0}^n A_{m-j, n-k} \Delta_{j,k} = 
 A_{00} \Delta_{mn} + \mbox{``lower order terms of } \Delta \mbox{''}.
\]
Let 
\[
\tilde{\delta}_{j,k}^{m,n}  = \begin{cases}
1 & j < m \mbox{ or } k < n \\
0& j = m \mbox{ and } k = n
\end{cases}.
\]
Then we use $\tilde{\delta}_{j,k}^{m,n}$ to extract terms of order $(m,n)$ from the 
sum and write the equation for $\Delta_{mn}$ as 
\[
(m \lambda_1 + n \lambda_2) \Delta_{m,n} -
A_{0,0} \Delta_{m,n} -  \sum_{j=0}^m \sum_{k=0}^n \tilde \delta_{j,k}^{m,n}
A_{m-j, n-k} \Delta_{j,k} = q_{m,n}.
\]
Recall that $A_{0,0} = Dg(0) = Df(\mathbf{x}_0)$, so that rearranging terms leads to
the linear equations 
\begin{equation}\label{eq:homEq2}
\left(Df(\mathbf{x}_0) - (m \lambda_1 + n \lambda_2)\mbox{Id} \right) \Delta_{m,n} = 
- q_{m,n} - \sum_{j=0}^m \sum_{k=0}^n \tilde \delta_{j,k}^{m,n}
A_{m-j, n-k} \Delta_{j,k},
\end{equation}
for $m + n \geq 2$. Since the right hand side of  
Equation \eqref{eq:homEq2} is exactly the right hand side appearing in the homological 
equations \eqref{eq:homEq} of Section \ref{sec:powerMatch},
arguing as in Remarks \ref{rem:wellDefined} and \ref{rem:realityOfParm}
shows that the Equations of  \eqref{eq:homEq2} are uniquely solvable 
for all $m + n \geq 2$ just as before, 
and that the resulting power series coefficients have the desired symmetry.
Then this Newton scheme 
is well defined on the space of formal power series.

\subsubsection{A pseudo-Newton scheme} \label{sec:pseudoNewton} 
While the Newton scheme of the previous section converges rapidly (in the 
sense of the number of necessary iterations), 
solving the required non-constant coefficient linear equations are expensive.
In this case the overall computation may be slow just because of the cost
of computing the individual corrections.  
The iterations can be speed up as follows. 

First, we note that 
\[
D\Psi[P] \Delta (\sigma) = D \Delta (\sigma) \Lambda \sigma 
- Df(\mathbf{x}_0) \Delta(\sigma) + \mbox{``higher order terms''},
\]
and we define a new iterative scheme 
\[
P_{k+1}(\sigma) = P_k(\sigma) + \tilde{\Delta}_k(\sigma),
\]
where $ \tilde{\Delta}_k$ is a solution of the \textit{constant coefficient linear
equation} 
\[
D  \tilde{\Delta}_k(\sigma) \Lambda \sigma -  Df(\mathbf{x}_0)  \tilde{\Delta}_k(\sigma) 
= - \Psi(H_k).  
\]
On the level of power series, this equation becomes
\[
\sum_{m=0}^\infty \sum_{n=0}^\infty \left[(m \lambda_1 + n \lambda_2) \mbox{Id} - Df(\mathbf{x}_0)\right]
 \tilde{\Delta}_{m,n} \sigma_1^m \sigma_2^n = \sum_{m=0}^\infty \sum_{n=0}^\infty q_{m,n} \sigma_1^m \sigma_2^n,
\]
and matching like powers yields the linear equations 
\[
\left[Df(\mathbf{x}_0) - (m \lambda_1 + n \lambda_2) \mbox{Id} \right]
 \tilde{\Delta}_{m,n} =  -q_{m,n}.
\]
These homological equations uniquely determines the coefficients $\tilde{\Delta}_{m,n}$,
and have the virtue of being ``diagonal'' in Taylor coefficient space.  In practice we find that the pseudo-Newton scheme requires more iterates than the Newton method to converge. However, a single iteration step is much faster and for 
reasonable values of $N$ the pseudo-Newton method is faster overall. We discuss this further below.

\begin{remark}
In practice the linear approximation of $P$ by the eigenvectors provides 
a good initial guess for the Newton and pseudo-Newton schemes, especially 
when computations are started ``from scratch''.  However, within the context of 
calculations based on parameter continuation, 
we will take $P_0$ as the high order parameterization from the previous mass values.

Indeed, it seems that the best 
results are obtained by a ``hybrid'' approach.  That is, we compute an initial guess 
$P_0$ by recursively solving Equation \eqref{eq:homEq}
to some fixed order, $N_0$. Then, we refine this approximation via the Newton or pseudo-Newton scheme to obtain a polynomial approximation to order, $N > N_0$.  The runtime performance for this hybrid approach is recorded in Table \ref{tab:2}.
\end{remark}

\begin{remark}[Quantifying the errors] \label{rem:errors}
Suppose that the polynomial
\[
P^N(z_1, z_2) = \sum_{0 \leq m+n \leq N} p_{m,n} z_1^m z_2^n, 
\]
is an approximation solution of Equation \eqref{eq:invEq}.  One way to measure the 
quality of the approximation is to measure the {\em defect} associated with $P^N$ defined by the quantity 
\[
\mbox{defect}(P^N) = 
\sup_{\mathbf{z} \in D^2} \norm{DP^N(\mathbf{z}) \Lambda \mathbf{z} - f(P(\mathbf{z}))}_{\mathbb{C}^4}.
\]
This quantity could be approximated by evaluating on a mesh of points in $D$.
On the other hand, we can use the fact that for power series 
on the unit disk we have the bound
\[
\sup_{\mathbf{z} \in D^2} \| g(\mathbf{z}) \|_{\mathbb{C}^4} \leq 
\sum_{m+n = 0}^\infty \| a_{m,n} \|_{\mathbb{C}^4},
\]
where the infinite sum can be approximated by a finite sum.
Then another useful a-posteriori indicator is obtained by choosing an 
$N' > N$ and computing the quantity 
\[
\varepsilon_{\mbox{a-posteriori}} = 
\sum_{0 \leq m+n \leq N'} 
\left\| (m \lambda_1 + n \lambda_2) p^N_{m,n} - [f \circ P^N]_{m,n} \right\|_{\mathbb{C}^4},
\]
where $p_{m,n}^N$ are the power series coefficients of $P^N$, 
and $[f \circ P^N]_{m,n}$ are the coefficients of $f(P^N(\mathbf{z}))$.
Of course this bounds also the real image of $P^N$.

If $f$ is a polynomial of order $K$ then we take $N' = K N$.
If $f$ is not a polynomial, then the power series for 
$f\circ P^N$ has infinitely many terms even though $P^N$
is polynomial.  Then we choose $N' > N$ somewhat arbitrarily.
Note that $p_{m,n}^N$ are zero when $m + n > N$, so that eventually the sum
involves only the coefficients of the composition.   

Yet another useful error indicator is obtained by considering the dynamical 
conjugacy of Equation \eqref{eq:flowConj}.  Since the true solution 
satisfies the dynamical conjugacy exactly we consider also the quantity
defined by 
\[
\mbox{conjugacyDefect}(P^N) = 
\sup_{\mathbf{z} \in D^2} \sup_{t > 0} \norm{
\Phi(P(z_1, z_2), t) - P(e^{\lambda_1 t} z_1, e^{\lambda_2 t} z_2)}_{\mathbb{C}^4}.
\]
To approximate this quantity we fix $\tau > 0$ and 
let $\Phi_{\mbox{num}}$ denote a numerical 
integrator and $z_{k}$, $1 \leq k \leq K$
be a mesh of the complex circle so $|z_k| = 1$.
Define the indicator
\[
\varepsilon_{\mbox{conjugacy}} = \max_{1 \leq k \leq K}
\left\| \Phi_{\mbox{num}}(P^N(z_k, \overline{z_k}), \tau) - 
P^N(e^{\lambda_1 \tau} z_k, e^{\lambda_2 \tau} \overline{z_k})
\right\|_{\mathbb{C}^4}.
\]
Error bounds for a number of example computations are recorded in 
Table \ref{tab:1}.
\end{remark}

\begin{table}[t!]
\caption{\textbf{Taylor order, scaling, and error bounds for the 
parameterization method:} table reports the numerical defect and numerical 
conjugacy error associated with the local stable/unstable manifold parameterization
for a number of example computations, 
as functions of the polynomial order and eigenvector scalings.
The first column records the polynomial order $N$ of the approximation $P^N$.
The second column is the magnitude $\|\xi\|$ of the eigenvector (the scaling of the 
parameterization).  These are the inputs which must be specified by the 
user in any computation using the parameterization method.  
The third column reports the corresponding 
bound on magnitude of the highest order Taylor coefficients.   
The fourth and fifth columns record the numerical 
defect and conjugacy error respectively.  The sixth column 
reports the worst of these two quantities measured in multiples of machine epsilon.}
\label{tab:1}       
\begin{center}
\begin{tabular}{llllll}
\hline\noalign{\smallskip}
$N$ & $\|\xi\|$ &   $\max_{N} \|p_{m,n}\|$ & defect &  conj error  & $\#$ machine eps \\
\noalign{\smallskip}\hline\noalign{\smallskip}
 $1$ & $10^{-8}$ & $10^{-8}$ & $2.3 \times 10^{-15}$& $3.9 \times 10^{-15}$ & 18\\
 $1$ & $10^{-6}$  & $10^{-6}$ & $1.4 \times 10^{-11}$& $1.3 \times 10^{-11}$ &5,855 \\
  $1$ & $10^{-4}$  & $10^{-4}$ & $1.39 \times 10^{-7}$ & $1.3 \times 10^{-7}$ & $5.9\times 10^8$ \\
   $1$ & $1$ & $1$ & $13.9$& $4.1$ & $2.3 \times 10^{16}$ \\
$2$ & $10^{-6}$  & $1.4 \times 10^{-12}$ & $9.2 \times 10^{-16}$& $1.5 \times 10^{-14}$ & 68 \\
$3$ & $10^{-4}$  & $2.9 \times 10^{-12}$ & $6.5 \times 10^{-15}$& $1.6 \times 10^{-14}$ & 73 \\
$4$ & $10^{-3}$ &  $3.9 \times 10^{-12}$ & $7.9 \times 10^{-14}$& $2.8 \times 10^{-14}$ & 356 \\
$5$ & $10^{-3}$ & $2.5 \times 10^{-15}$ & $1.3 \times 10^{-15}$& $1.6 \times 10^{-14}$ & 73 \\
$7$ & $10^{-2}$  & $3.6 \times 10^{-13}$ & $2.9 \times 10^{-13}$& $8.6 \times 10^{-14}$ & 1036 \\
$10$ & $10^{-2}$ &  $1.2 \times 10^{-18}$ & $9.3 \times 10^{-16}$& $1.6 \times 10^{-14}$ & 73 \\
$15$ & $10^{-1}$  & $6.2 \times 10^{-12}$ & $3.9 \times 10^{-10}$& $5.3 \times 10^{-11}$ & $1.7 \times 10^6$ \\
$20$ & $10^{-1}$ &  $4.1 \times 10^{-15}$ & $3.9 \times 10^{-13}$& $4.9 \times 10^{-14}$ & $1,756$ \\
$25$ & $10^{-1}$ &  $2.9 \times 10^{-18}$ & $1.6 \times 10^{-15}$& $5.6 \times 10^{-14}$ & $253$ \\
$35$ & $1.5 \times 10^{-1}$  & $2.2 \times 10^{-18}$ & $2.4 \times 10^{-15}$& $1.1 \times 10^{-13}$ & $495$ \\
$45$ & $2 \times 10^{-1}$   & $3.1 \times 10^{-17}$ & $3.3 \times 10^{-14}$& $1.0 \times 10^{-13}$ & $450$ \\
$65$ & $2.5 \times 10^{-1}$ & $2.6 \times 10^{-17}$ & $7.7 \times 10^{-14}$& $1.1 \times 10^{-13}$ & $495$ \\
\noalign{\smallskip}\hline
\end{tabular}
\end{center}
\end{table}

\begin{remark}[Eigenvector scaling and coefficient decay] \label{rem:eigenvector_scaling}
Solutions of Equation \eqref{eq:invEq} are only unique up to the choice of the 
scalings of the eigenvectors and this freedom is exploited in our numerical algorithms.
Indeed, this is the reason we can always take our domain to be the unit disk.
The results in Table \ref{tab:1} describe the dependence of the  
numerical errors on the approximation order and the eigenvector 
scalings.
These numerical experiments lead to the following heuristic. If we scale the eigenvectors
so that the final coefficients -- that is the $N$-th order coefficients of $P^N$ -- 
are on the order of machine epsilon, then we obtain  
a-posteriori errors on the order of machine epsilon.  
\end{remark}

\begin{table}
\caption{\textbf{Runtime data for the parameterization method:} here the manifolds are first 
computed to order $N_0$ in order to measure the exponential decay rate associated with 
the Taylor coefficients. This data is used to determine the optimal eigenvector scaling, 
and then the coefficients are computed to order $N$ in a ``production run''.  The initial computation 
is always computed to order $N_0$ by recursion.  Then the production run is computed either by 
recursion, by Newton, or by the pseudo-Newton method. The computations were performed on a
MacBook Air with a $1.8GHz$ Intel Core i5 processor and $8$GB of 1600 MHz ram
running the version 10.12.6 of the Sierra  operating system with MATLAB version 
$2017b$. The same computations run about twenty percent faster on a 
Mac Pro desktop with a $3.7$GHz quad-core Intel Xenon E5 processor
and the same version of MATLAB.}
\label{tab:2}       

\begin{center}
\begin{tabular}{lllllll}
\hline\noalign{\smallskip}
$N$ & $N_0$ & $\tau$ & $\max_{m+n = N} \|p_{m,n}\|$  & recursion & Newton & pseudo-Newton  \\
\noalign{\smallskip}\hline\noalign{\smallskip}
 $10$ & $5$ & $0.024$ &  $8.4 \times 10^{-15}$ & $3.1$(sec) & $0.49$(sec) & $0.45$ (sec) \\
$20$ & $5$ & $0.13$ &  $2.9 \times 10^{-12}$ & $3.3$(sec) & $0.94$(sec) & $0.73$ (sec) \\
$20$ & $10$ & $0.09$ &  $9.9 \times 10^{-16}$ & $3$(sec) & $0.7$(sec) & $0.62$ (sec) \\
$30$ & $10$ & $0.15$ &  $2.9 \times 10^{-15}$ & $3.5$(sec) & $2.1$(sec) & $1.2$ (sec) \\
$30$ & $15$ & $0.15$ &  $1.2 \times 10^{-15}$ & $3.8$(sec) & $1.6$(sec) & $1.1$ (sec) \\
$30$ & $20$ & $0.15$ &  $1.2 \times 10^{-15}$ & $4.4$(sec) & $1.7$(sec) & $0.93$ (sec) \\
$40$ & $20$ & $0.21$ &  $4.5 \times 10^{-15}$ & $4.6$(sec) & $4.0$(sec) & $2.01$ (sec) \\
$70$ & $30$ & $0.27$ &  $4.1 \times 10^{-15}$ & $9.9$(sec) & $28.1$(sec) & $14.9$ (sec) \\
$70$ & $50$ & $0.27$ &  $1.2 \times 10^{-15}$ & $10.9$(sec) & $30.8$(sec) & $12.2$ (sec) \\
\noalign{\smallskip}\hline
\end{tabular}
\end{center}
\end{table}

\subsection{Integration of analytic arcs} \label{sec:advection}
In Section \ref{sec:atlas} we present a scheme for computing an atlas for the stable/unstable manifolds 
which relies on integrating analytic arcs of initial conditions by the flow generated by $f$. We describe 
this integrator in terms of power series expansions. Let us assume that 
$\gamma \colon (-1, 1) \to \mathbb{R}^4$ is an analytic arc with power series expansion 
\[
\gamma(s) = \sum_{n = 0}^\infty \gamma_n s^n \qquad \gamma_n \in \rr^4.
\]
Denote the formal series expansion 
\[
\Gamma(s, t) = \Phi(\gamma(s), t) = \sum_{m=0}^\infty \sum_{n=0}^\infty a_{m,n} s^n t^m \qquad a_{m,n} \in \rr^4.
\] 
Here, we use the variables $(s,t)$ in place of $(z_1,z_2)$ to emphasize the intuition that $s$ corresponds to the ``spatial'' parameterization along the initial data, and $t$ corresponds to the ``time'' parameterization along the flow. In other words, we consider $\Gamma$ as the solution of the parameterized 
family of initial value problems 
\[
\frac{d}{dt} \Gamma(s, t) = f(\Gamma(s, t)), 
\qquad \Gamma(s, 0) = \gamma(s), \qquad s \in (-1,1).
\]
Substituting the formal series into this IVP and matching like powers leads to the recursion relations
\[
a_{m+1, n} = \frac{1}{m+1} [f \circ \Gamma]_{m,n},
\qquad  a_{0, n} = \gamma_n,
\]
which allow us to compute the coefficients of $\Gamma$ to arbitrary order using the same methods described in Section \ref{sec:solveInvEq}. 
We also note that the precision of these formal series computations depend on convergence and domain decomposition of these series expansions which has not been addressed and will also be taken up in the following section.

\section{Building an atlas for the local stable/unstable manifold} \label{sec:atlas}
In this section let $W^*(\mathbf{x}_0)$ denote an invariant stable/unstable manifold for a saddle-focus equilibrium, $\mathbf{x}_0$. Our goal is to describe an algorithm for producing an atlas of chart maps which parameterizes a large portion of the invariant manifold. 
The union of the images of these maps is a piecewise parameterization 
of a $2$-dimensional subset of $W^*(\mathbf{x_0})$. 
Our procedure is iterative and at each step outputs a 
(strictly) larger piecewise parameterization. 

It is important to emphasize that our computations are carried out only to finite order. In particular, the charts described in this section are analytic functions of two complex variables. However, in practice we fix $(M,N) \in \nn^2$, and for each chart we compute a finite polynomial approximation of order $(M,N)$. Nevertheless, throughout this section we denote these analytic charts and their polynomial approximations using the same notation.
We end this section by outlining methods for reliably, efficiently, and automatically computing these atlases. This includes algorithms for estimating and controlling truncation errors, identifying Taylor series blowup, domain decomposition, and stiffness.

\begin{figure}[!h]\label{fig:subdivision_schematic}
	\centering
\includegraphics[width = .6\textwidth, keepaspectratio=true]{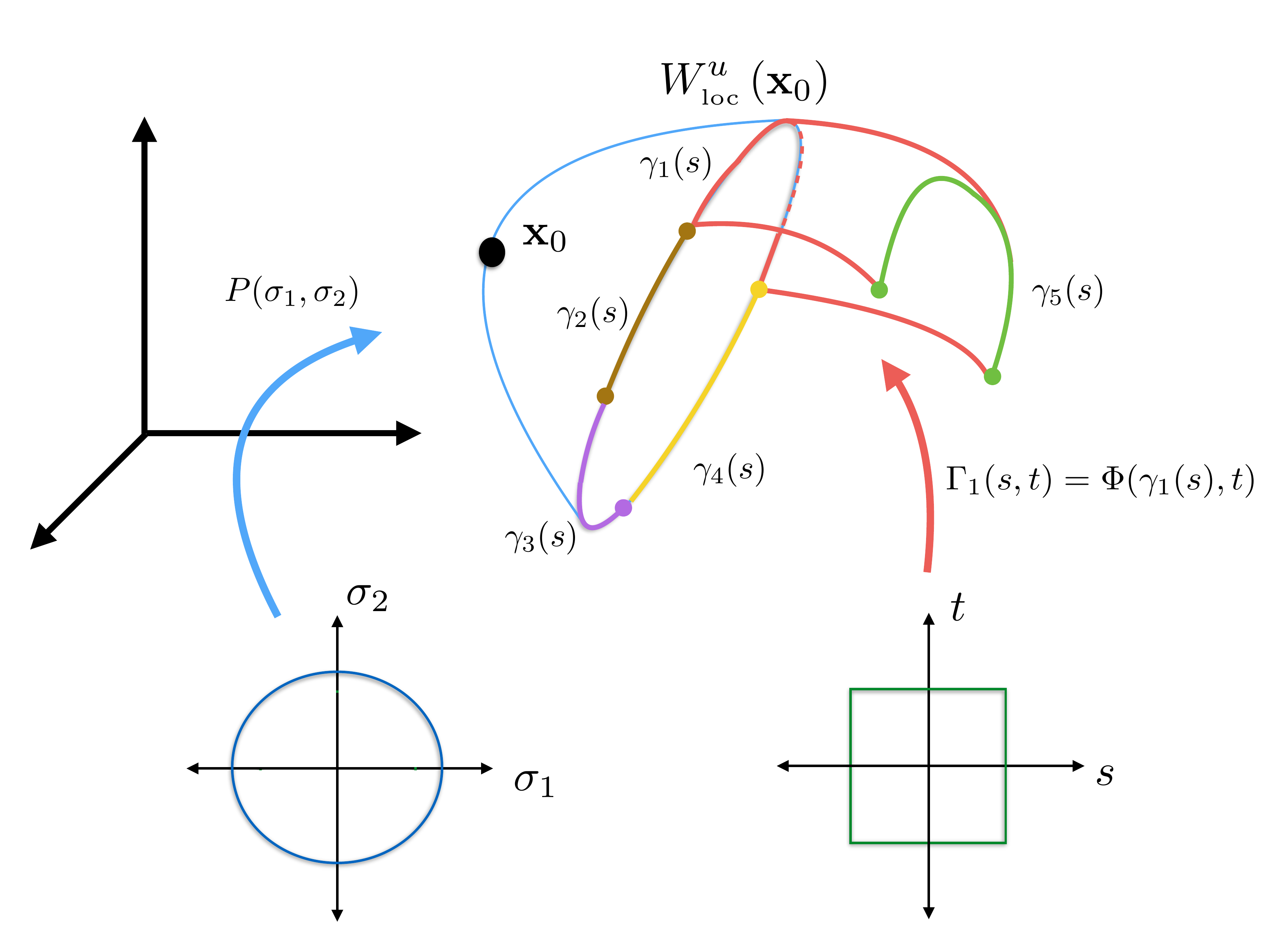}	
	\caption{\textbf{Building an atlas:}  Here $P$ is a chart for a neighborhood of the equilibrium 
	$\mathbf{x}_0$ computed using the parameterization method.  To grow the atlas we 
	mesh the boundary of the $P$ using a collection of analytic arcs $\gamma_j(s)$.  
	Each of these arcs is advected under the flow $\Phi$ to 
	produce a new chart $\Gamma_j(s, t) = \Phi(\gamma_j(s), t)$.  
	The union of $P$ with all the $\Gamma_j$ is an atlas for a larger local stable/unstable 
	manifold.  }
\end{figure}

\subsection{Iterative method for computing charts}
Before elaborating on the technical details of our method we briefly describe the overall strategy.
Starting from the parameterized local invariant manifolds obtained via methods described in 
Section \ref{sec:parmMethod}, we want to build an even larger representation 
of the manifold.  There are many ways to grow such a representation.
We could for example simply integrate a collection of initial 
conditions meshing the boundary of the parameterization.  
However, as is well known, the exponential separation 
of initial conditions will force these orbits apart and eventually 
degrade the description of the manifold.   Instead we mesh the boundary into a collection of 
one dimensional arcs and advect each of these under the flow.  
Propagating these arcs maintains the fidelity of the representation, and 
leads to new ``patches'' of the manifold.  

Since the initial chart is parameterized by a high order polynomial, 
we would like the same representation for new charts.  
To this end we develop a high order Taylor integration scheme which applies
to analytic arcs of initial conditions.  This results in a power series representation of the flow of 
a boundary arc, and we take this as our next chart.  After advecting each 
one of the boundary arcs we have a new and strictly larger representation of
local stable/unstable manifold.
The idea is illustrated in Figure \ref{fig:subdivision_schematic}.

After one step of this procedure we have moved the boundary of the local 
invariant manifold. In some cases, the image of the advected arc undergoes excessive stretching due to the exponential separation of initial conditions. This stretching in phase space is matched by a corresponding blow up in coefficients of the Taylor expansion, and the computations become numerically unstable.  

This problem is overcome by occasionally remeshing the boundary of the atlas. This comes at a cost of increasing the number of charts in the next step of the algorithm. Hence, efficiently computing large atlases while controlling numerical error requires automatic algorithms for managing the growth of the power series coefficients, deciding how long to integrate each individual arc, and deciding when 
and how to subdivide the new boundaries. These topics account for much of the technical details which follow.   

\subsubsection{The initial local manifold}
The first step in our algorithm is to compute a polynomial approximation of the 
local parameterization, either by directly solving the 
homological equations or by iterating the Newton or pseudo-Newton schemes described in 
Section \ref{sec:parmMethod}. Let $\Gamma_0$ be a solution of Equation \eqref{eq:invEq}, 
and $D^2$ denote the unit 
    polydisc in $\cc^2$. 
    Recall that $\Gamma_0: D^2 \to W^*_{\text{loc}}(\mathbf{x}_0)$ is analytic, and 
    that $\Gamma_0(\partial D^2)$ is flow transverse. In particular, $\Gamma_0$ serves 
    as our initial local parameterization, and we refer to it as a $0^{\rm th}$ 
    generation {\em interior} chart and we write 
    $\Gamma_0(D^2) = W^*_0(\mathbf{x}_0)$. 

In practice, we compute $\Gamma_0$ to order $(N,N)$ with $N \in \nn$ chosen by applying the heuristic methods discussed in Section \ref{sec:solveInvEq}.  This chart is represented in the computer as a polynomial in two complex variables of total degree $\deg(\Gamma_0) = (N-1)^2$. The truncation error of this approximation is controlled directly by choosing the eigenvector scaling as described in Remark \ref{rem:eigenvector_scaling}, and in practice, is on the order of machine epsilon.  

\subsubsection{The initial manifold boundary}
With $\Gamma_0$ in hand, we fix $K_0 \in \nn$ and subdivide $\partial D$ into $K_0$-many analytic segments, each of which has the form, $c_j: [-1,1] \to \partial D$, for $1 \leq j \leq K_0$. We parameterize $\partial W^*_0(\mathbf{x}_0)$ by defining $\gamma_j(s)  = \Gamma_0 \circ c_j(s)$ and we refer to $\gamma_j$ as a {\em lifted boundary}. Note that for each $1\leq j\leq K_0$,  $\gamma_j: [-1,1] \to \partial W^*_{\text{loc}}(\mathbf{x}_0)$ and $\gamma_j([-1,1])$ is a flow transverse arc since $\Gamma_0$ is a dynamical conjugacy and the image of  $c_j$ is transverse to the linear flow. Now, we define the $0^{\rm th}$ generation boundary to be 
\[
\partial W^*_0(\mathbf{x}_0) = \bigcup_{j=1}^{K_0} \gamma_j([-1,1]), 
\]
and refer to each $\gamma_j$ as a $0^{\rm th}$ generation {\em boundary} chart.

\subsubsection{The next generation}
Now, we apply the high-order Taylor advection described in Section \ref{sec:advection} to grow a larger local manifold denoted by $W^*_1(\mathbf{x}_0)$. Specifically, for  $1 \leq j \leq K_0$, we choose $|\tau_j| > 0$, and our advection algorithm takes $\gamma_j,\tau_j$ as input and produces a chart, $\Gamma_{1,j}: D \to W^*(\mathbf{x}_0)$ which satisfies
\[
\Gamma_{1,j}(s,t) = \Phi \left( \gamma_j(s),\frac{t}{\tau_j} \right) \qquad \text{for} \quad (s,t) \in [-1,1]^2.
\]
In other words, $\Gamma_{1,j}$ parameterizes the advected image of $\gamma_j$ under the flow over the time interval $[0,\tau_j]$. These new charts are referred to as $1^{\rm st}$ generation interior charts which we add to our atlas to obtain the first generation local parameterization
\[
W^*_1(\mathbf{x}_0) = W^*_0(\mathbf{x}_0) \cup \bigcup_{j=1}^{K_0} \Gamma_{1,j}(D). 
\]
Note that $\tau_j \neq 0$ and since $\gamma_j$ is flow transverse, we have $W^*_0(\mathbf{x}_0) \subsetneq W^*_1(\mathbf{x}_1)$ is a strict subset. In fact,
transversality of $\gamma_j$ implies the stronger condition that $\partial W^*_0(\mathbf{x}_0) \subset \text{Int}(W^*_1(\mathbf{x}_0))$ i.e.\ the manifold has grown through {\em every} point on the previous boundary. 

\begin{remark}[Time rescaling]
	\label{rem:time_rescaling}
In this description, $\tau_j$ serves as a time-rescaling of the flow. This allows direct control over the truncation error (in the time direction) and is analogous to the eigenvector scaling for the initial parameterization described in Remark \ref{rem:eigenvector_scaling}. However, choosing this time-rescaling is typically more difficult than choosing the eigenvector scaling and we postpone the discussion of this problem to Section \ref{sec:time_scaling}.
\end{remark}

Once the $1^{\rm st}$ generation interior charts are computed by advection, the $1^{\rm st}$ generation boundary arcs are now obtained by evaluation of the time variable. In particular, for $1 \leq j \leq K_0$, the evaluation, $\Gamma_{1,j}([-1,1],1) \subset \partial W^*_1(\mathbf{x}_0)$ is a flow transverse arc segment. We perform spatial rescaling as needed (see Remark \ref{rem:spatial_rescaling} below) to obtain the next generation boundary arcs, $\gamma_{1,j} : [-1,1] \to \partial W^*_1(\mathbf{x}_0)$ where $1 \leq j \leq K_1$ for some $K_1 \geq K_0$ and
\[
\gamma_{1,j}([-1,1]) \subset \Gamma_{1,j'}([-1,1],1) \qquad \text{for some} \quad 1\leq j' \leq K_0 
\]
is flow transverse. The advection and evaluation algorithms are then iterated to increase the number of charts in 
the atlas. The $L^{\rm th}$ step in the iteration chain has the form
\[
\xmapsto \cdots  \partial W^*_{L-1}(\mathbf{x}_0) \xmapsto{\text{advection}} W^*_L(\mathbf{x}_0) \xmapsto{\text{evaluation}} \partial W^*_L(\mathbf{x}_0) \xmapsto \cdots 
\]
where $W^*_L(\mathbf{x}_0)$ is parameterized by $K_{L-1}$-many interior charts (polynomials in both the space and time variables), $\partial W^*_L(\mathbf{x}_0)$ is parameterized by $K_{L}$-many boundary charts (polynomials in the space variable only), and $K_{L-1} \leq K_L$. 

If we stop iteration, say at the $L^{\rm th} step$, then the final atlas,
\[
\mathcal{A} = \left\{\Gamma_0, \bigcup_{j=1}^{K_0} \Gamma_{1,j}, \bigcup_{j=1}^{K_1} \Gamma_{2,j}, \dotsc, \bigcup_{j=1}^{K_L} \Gamma_{L,j} \right\},
\]
is a collection of $\abs{\mathcal{A}} =  1 + \sum\limits_{l=1}^{L} K_l$-many analytic charts is a piecewise parameterization 
a portion of the invariant manifold. 

\begin{remark}[Spatial rescaling]
	\label{rem:spatial_rescaling}
	The parameters, $K_0,\dotsc,K_L$, control the number of boundary subdivisions, and therefore, allow direct control over scaling in the spatial direction. As in the time-rescaling problem, choosing these parameters effectively is a nontrivial problem which we take up in Section \ref{sec:subdivision}. 
\end{remark}

\subsection{Convergence, manifold subdivision, and numerical integration}
Thus far, we have ignored the issue of convergence for our formal power series computations. The best method for studying this issue is to combine rigorous numerical computations with a-posteriori analysis and obtain a proof of the existence  of an analytic solution and explicit error bounds on the polynomial approximation. 
Rigorously validated numerical methods for invariant manifold atlases 
are described in detail in \cite{manifoldPaper1,shaneAndJay}.
In the present work we explore the utility of invariant manifold atlases as a purely numerical 
tool, and trade the computer assisted proof of rigorous error bounds for improved runtime performance.

In the absence of a rigorous validation scheme we develop more heuristic checks to insure the reliability of the 
computations.  More precisely, we must automatically identify and fix numerical accuracy issues related to numerical Taylor integration. This amounts to rescaling our Taylor coefficients whenever the decay in either space or time becomes too slow. However, this is less straight-forward than the eigenvector rescaling for the initial local parameterization described in Remark \ref{rem:eigenvector_scaling}. In particular, it is helpful to consider the rescaling in space and time ``directions''  separately.

\subsubsection{Time-stepping}
\label{sec:time_scaling}
Recall that at the saddle-focus equilibrium, the stable/unstable eigenvalues occur in complex conjugate pairs. In particular, both eigenvalues in each pair have equal real parts. It follows that identically re-scaling each pair of eigenvectors is the ideal strategy. In fact, this strategy is also necessary and sufficient to ensure that the initial parameterization is real-valued, see \cite{parmChristian}. Moreover, in the general case of a hyperbolic equilibrium, the real part of each eigenvalue is a measure of the expansion or contraction rate in the direction of its associated eigenvector. Thus, in cases for which they are not equal, the real parts are still explicitly known and the eigenvectors are scaled proportional to these rates. 

On the other hand, all but the initial chart in our atlas is obtained via our advection scheme. In this case, neither the expansion/contraction rates, or their directions are explicitly known. Obtaining these estimates would require solving for the (spatial) derivative of the flow on each chart. For a general vector field defined on $\rr^n$, this amounts to increasing the phase space dimension of our ODE solver from $n$, to $n + n^2$, which would significantly reduce the size of each manifold which is computationally feasible to produce.

Instead, we take an approach similar to \cite{manifoldPaper1}, which describes heuristics for rescaling time and space independent of one another. Specifically, we adopt a time-rescaling which ensures that the norm of the $M^{\rm th}$ ``coefficient'' (with respect to $t$) for each chart, is less than machine epsilon. Note that for a classical IVP this coefficient is of course just a scalar. However, in our case the coefficient is actually an analytic function of the spatial variable, represented as a power series and the norm of this coefficient is measured using the $\ell^1$ norm. This is made more precise in the following section.

This choice is highly conservative, which gives us tight control over the truncation error in the time direction. On the other hand, the spatial rescaling in the present work deviates from the scheme presented in \cite{manifoldPaper1} and is detailed in Section \ref{sec:subdivision}.

\subsubsection{Manifold subdivision}
\label{sec:subdivision}

Next, we describe the spatial-rescaling scheme which we refer to as {\em manifold subdivision}. We assume that the time-rescaling described in the previous section has been carried out on each chart, and our interest is in rescaling each boundary arc to control truncation errors accumulating in the ``space direction''. This is equivalent to subdividing a manifold since it is reasonable to assume the rescaling will always shrink the domain. Thus, a single boundary arc will give rise to multiple subarcs defined on reduced domains. 

To be more precise, we let $C^{\omega}$ denote the collection of real-valued, analytic functions defined on $(-1,1)$, and let $\mathcal{S}$ denote the collection of real-valued sequences. We define the {\em Taylor transform}, $\mathcal{T}: C^{\omega} \to \mathcal{S}$, to be the mapping which sends an analytic function to its sequence of Taylor coefficients centered at $z = 0$. Specifically, if $g \in C^\omega$ has the Taylor expansion, 
\[
g(z) = \sum_{n=0}^{\infty} a_n z^n \qquad a_n \in \rr, \quad z \in (-1,1),
\]
then $\TT{g} = \{a_n\} = a \in \mathcal{S}$. Now, we equip $\mathcal{S}$ with the $\ell_1$-norm defined by 
\[
\norm{a}_{1} = \sum_{n=0}^{\infty} \abs{a_n},
\]
and we note that elements of $\mathcal{S}$ with finite norm form a closed sub-algebra denoted as 
\[
\ell_1 = \{x \in \mathcal{S} \ : \norm{x}_{1} < \infty \},
\]
and we write $\norm{a}_{\ell_1}$ when we want to emphasize that $a \in \ell_1$ (i.e.~we write $\norm{a}_{\ell_1}$ for the norm $\norm{a}_1$ when $\norm{a}_1$ is finite).  

We remark that our error analysis is carried out using the $\ell_1$-norm due to the efficiency of computing this norm for polynomials. However, if $\overbar g \approx g$ is a numerical approximation, then the errors we are interested in are of the form
\[
\norm{\overbar g - g}_{\infty} = \sup_{z \in [-1,1]} \setof{\abs{\overbar g(z) - g(z)}}.
\]
We are justified in using the $\ell_1$ norm
due to the well known result that $\norm{\overbar g - g}_{\infty} \leq \norm{\overbar g - g}_{\ell_1}$.

Now, suppose $\gamma \in C^\omega$ and assume that $\TT{\gamma} = a \in \ell_1$. Since $\Phi$ is a non-linear flow, a typical arc segment undergoes rapid deformation and stretching when advected. This implies that for a single step in our algorithm with the general form,
\[
\xmapsto \cdots  \gamma \xmapsto{\text{advection}} \Gamma \xmapsto{\text{evaluation}} \gamma' \xmapsto \cdots, 
\]
we expect both the arc length and curvature of $\gamma'$ to be larger than for $\gamma$. On the level of Taylor coefficients, this statement about deformation/stretching says that if $b = \TT{\gamma'}$, then in general we expect $\norm{a}_{\ell_1} \leq \norm{b}_{\ell_1}$. The relationship between this norm and the truncation error implies that advecting an arc adversely impacts the propagation error. 

To see this, we recall that in practice our computation stores a truncated polynomial approximation for $\gamma'$ in the form $\overbar{b} = \left(b_0,\dotsc ,b_{N-1}\right)$. In order that $\overbar{b} \approx b$ is a ``good'' approximation (in the $\ell_1$ topology),  $|b_{n}|$ must be ``small'' for each $n \geq N$. These higher order terms correspond to the truncation error for $\gamma'$ and primarily arise from two sources. One source which we can not control (once $N$ is fixed) is the truncation error associated with $\gamma$. However, by inspection of the Cauchy product formula in Equation \eqref{eq:cauchy_product}, it is clear that the polynomial coefficients stored for $\gamma$ also contribute to this truncation error for $\gamma'$ after applying the nonlinearity. We refer to these contributions as {\em spillover} terms.

This observation implies that for $\bar{b} \approx b$ to be a good approximation, we must also require that $\abs{a_n}$ is ``small'' for each $n > N'$ where $N' < N$ depends on the degree of the nonlinearity. This motivates the following heuristic method for controlling truncation error for propagated arcs. We begin by assuming that $a$ has approximately geometric decay. Specifically, we expect that there exists some $r < 1$ such that the tail of the series defined by $\gamma$ decays faster than the geometric series with ratio $r$. In this case, the truncation error is of order $\mathcal{O}(r^N)$. Now, fix $0 < N' < N$, and we define the {tail ratio} for $a$ by
\begin{equation}\label{eq:tailratio}
T_{N'}(a) := \frac{\sum_{n=N'}^{N-1} \abs{a_n}}{\sum_{n=0}^{N-1} \abs{a_n}} = \frac{\norm{a - a^{N'}}_{\ell_1}}{\norm{a}_{\ell_1}}.
\end{equation}
Evidently, $T_{N'}(a)$ is small whenever ``most'' of the $\ell_1$ weight of $a$ is carried in the first $N'$-many coefficients. It follows that if $T_{N'}(a)$ is sufficiently small, then under the action of a nonlinear function, $f : \ell_1 \to \ell_1$, the spillover terms for $f(a)$ remain small. Of course, small is dependent on context and in particular, choices for $N'$ as well as thresholding values for $T_{N'}$ are problem specific. In the present work, we prove it is always possible to control $T_{N'}$. 

\begin{remark}
	\label{rem:product_spaces}
	Strictly speaking, for the CRFBP we have $\gamma = \left(\gamma^{(1)}, \dotsc, \gamma^{(4)}\right)$ where each $\gamma^{(j)} \in C^\omega$ is a coordinate for the boundary chart. Similarly, $\TT{\gamma} = \left(a^{(1)},\dotsc,a^{(4)}\right) \in \ell_1^4$, and thus the discussion in Section \ref{sec:subdivision} thus far is technically not applicable. However, our restriction to scalar valued functions is justified by the fact that if $a \in \ell_1^4$, then defining 
	\[
	\norm{a}_{\ell_1^4} = \max \setof{\norm{a^{(1)}}_{\ell_1},\dotsc,\norm{a^{(4)}}_{\ell_1}}
	\] 
	makes $\ell^4_1$ into a normed vector space. This choice of norm gives us the freedom to restrict the discussion of remeshing and tail ratios to scalar valued functions. 
\end{remark}

Next, we describe our scheme for controlling the tail ratio. This algorithm takes a polynomial representation for $\gamma$, defined on $[-1,1]$ as input, and returns a list of polynomials, $\{\gamma_1,\dotsc,\gamma_K\}$, as outputs. The key point is that these polynomials are also defined on $[-1,1]$, and they can be chosen such that $T_{N'}(\gamma_j)$ is arbitrarily small for $1 \leq j \leq K$. In this work, we assume the output polynomials are specified as coefficient vectors of length $N$ (i.e.\ the same degree as the input), however this is not required. 

This gives rise to an additional {\em remeshing} step in our algorithm which is performed as needed after an evaluation step and prior to an advection step leading to an updated schematic
\[
\xmapsto \cdots  \gamma \xmapsto{\text{remeshing}} \setof{\gamma_j}_{1\leq j \leq K} 
\xmapsto{\text{advection}} \setof{\Gamma_j}_{1 \leq j \leq K} \xmapsto{\text{evaluation}} \setof{\gamma'}_{1 \leq j \leq K} \xmapsto \cdots
\]
In the remeshing step, the tail ratio for each boundary arc from the previous step is computed and checked against a threshold. Boundary arcs which exceed this threshold are flagged as poorly-conditioned, and subdivided into smaller subarcs which satisfy the threshold. The collection of resulting subarcs and well-conditioned arcs from the previous step is passed to the advection step where each results in a separate chart. 

Before proving this threshold can always be satisfied, we describe the subdivision algorithm. As noted in Remark \ref{rem:product_spaces}, it suffices to consider a single coordinate for a parameterized boundary arc. Thus, we assume $\gamma(s): [-1,1] \to \rr$ is analytic with Taylor series
\[
\gamma(s) = \sum_{n=0}^\infty a_n s^n, 
\]
and fix a subinterval, $[s_1,s_2] \subset [-1,1]$. Define the constants
\begin{equation}
\label{eq:new_center_radius}
\hat{s} := \frac{s_1+s_2}{2} \qquad \qquad \delta := \frac{s_2-s_1}{2}
\end{equation}
and define $\hat{\gamma}: [-1,1] \to \rr$ by 
\begin{equation}
\label{eq:def_subarc}
\hat{\gamma}(s) = \sum_{n = 0}^{\infty} c_n s^n \qquad \text{where} \quad c_n = \delta^n \sum_{k = n}^{\infty} a_k \binom{k}{n} \hat{s}^{k-n}.
\end{equation}
Then $\hat{\gamma}$ is a parameterization for the arc segment parameterized by $\gamma$ restricted to $[s_1,s_2]$. In fact,  $\hat{\gamma}$ is the Taylor series for $\gamma$ after re-centering at $\hat{s}$ and re-scaling by $\delta$ which satisfies the functional equation
\begin{equation}\label{eq:subarc_functionaleqn}
\hat{\gamma}(s) = \gamma(\hat{s} + \delta s) \qquad s \in [-1,1].
\end{equation}
Moreover, the mapping $a \mapsto c$ is a linear transformation on $\mathcal{S}$, and in particular, if $a_n = 0$ for all $n \geq N$, then $c_n = 0$ for all $n \geq N$ also. Now, we prove that we have explicit control over the tail ratio for $\hat{\gamma}$. 

\begin{proposition}[Controlling tail ratios]
	\label{prop:controltailratio}
	Suppose $\gamma: [-1,1] \to \rr$ is analytic, fix $\hat{s} \in (-1,1)$, $1 \leq N' \leq N$, and let $\epsilon > 0$. Then there exists $\delta > 0$ such that $T_{N'}(c) < \epsilon$ where $c$ is the truncation to order $N$ for $\hat{\gamma}: [-1,1] \to \rr$ defined by $\hat{s},\delta$ as in Equation \eqref{eq:def_subarc}.
\end{proposition}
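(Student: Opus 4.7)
The plan is to exploit the fact that the recentered and rescaled coefficients $c_n$ are, by Taylor's theorem, simply weighted derivatives of $\gamma$ at $\hat{s}$. Substituting $t = \hat{s} + \delta s$ into the power series for $\gamma$ and grouping by powers of $s$ gives
\[
c_n \;=\; \delta^n \, \frac{\gamma^{(n)}(\hat{s})}{n!}, \qquad n = 0, 1, \ldots, N-1,
\]
and this identity is also visible directly from Equation \eqref{eq:def_subarc} by recognizing the inner sum as $\gamma^{(n)}(\hat{s})/n!$. This observation reduces the claim to an elementary asymptotic estimate in $\delta$: the key point is that the numerator of $T_{N'}(c)$ involves only powers $\delta^n$ with $n \geq N'$, while the denominator retains the full sum starting at $n = 0$.

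First I would set $C := \max_{0 \leq n \leq N-1} |\gamma^{(n)}(\hat{s})|/n!$, which is a finite constant depending only on $\gamma$, $\hat{s}$, and $N$. Assuming $0 < \delta \leq 1$, the numerator of $T_{N'}(c)$ is bounded by
\[
\sum_{n=N'}^{N-1} |c_n| \;\leq\; C \sum_{n=N'}^{N-1} \delta^n \;\leq\; C\,(N - N')\, \delta^{N'}.
\]
For the denominator, let $m := \min\setof{k \geq 0 : \gamma^{(k)}(\hat{s}) \neq 0}$ denote the order of vanishing of $\gamma$ at $\hat{s}$, and assume the non-degenerate situation $m < N'$. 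Then
\[
\sum_{n=0}^{N-1} |c_n| \;\geq\; |c_m| \;=\; \delta^m \, \frac{|\gamma^{(m)}(\hat{s})|}{m!}.
\]
Combining these two bounds produces
\[
T_{N'}(c) \;\leq\; \frac{C\,(N-N')\,m!}{|\gamma^{(m)}(\hat{s})|} \; \delta^{N'-m},
\]
and since $N' - m \geq 1$, choosing $\delta$ sufficiently small forces $T_{N'}(c) < \epsilon$.

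The main subtlety will be the degenerate case $m \geq N'$, in which $c_0 = \cdots = c_{N'-1} = 0$ and a direct computation shows $T_{N'}(c) \equiv 1$ independently of $\delta$, so the bound cannot be improved by rescaling alone. However, because $\gamma$ is analytic and non-trivial its zeros are isolated, and the set of $\hat{s} \in (-1,1)$ at which $\gamma$ vanishes to order at least $N'$ is discrete. In the intended application $\hat{s}$ is the midpoint of a subinterval chosen adaptively during subdivision, so one may always perturb $\hat{s}$ by an arbitrarily small amount to land at a non-degenerate center; equivalently, the proposition should be read as conditional on this mild non-degeneracy hypothesis, which is automatic for the arcs encountered in the atlas algorithm.
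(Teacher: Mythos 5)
Your proposal is correct and follows essentially the same strategy as the paper's proof: rewrite the rescaled coefficients as $c_n = \delta^n\,\gamma^{(n)}(\hat{s})/n!$, bound the numerator of $T_{N'}(c)$ from above using the small factor $\delta^n$ with $n \geq N'$, and bound the $\ell_1$-norm in the denominator from below by a single surviving low-order term. The paper bounds the denominator by $|c_0| = |\gamma(\hat{s})|$, which silently assumes $\gamma(\hat{s}) \neq 0$ and yields the constructive choice $\delta = \min_{N' \leq n \leq N}\bigl(\epsilon|\gamma(\hat{s})|/(M(N-N'))\bigr)^{1/n}$. You instead bound the denominator by $|c_m|$, where $m$ is the order of vanishing of $\gamma$ at $\hat{s}$, which is strictly more general: it succeeds whenever $m < N'$, not just $m = 0$, and the exponent $\delta^{N'-m}$ quantifies the rate of improvement. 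More importantly, you make explicit the failure mode the paper leaves implicit: if $\gamma$ vanishes to order $\geq N'$ at $\hat{s}$ then $T_{N'}(c) \equiv 1$ regardless of $\delta$, so the proposition as stated is false at such centers (and the paper's $\delta$ formula degenerates to $0$ there). Your observation that these bad centers form a discrete set in $(-1,1)$, and that the adaptive subdivision is free to perturb $\hat{s}$ off of it, is the right fix and is a genuine clarification over what the paper provides. The only thing the paper's proof offers that yours does not is an explicit, implementable formula for $\delta$, which for a pure existence statement is inessential.
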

\begin{proof}
	Define $\gamma^N: [-1,1] \to \rr$ to be the Taylor polynomial obtained by truncating the Taylor series for $\gamma$ to order $N$. For $k \in \nn$, define the usual $C^k$-norm on $[-1,1]$ to be 
	\[
	\norm{g}_{C^k} = \max\limits_{0 \leq j \leq k} \left\{\norm{g^{(j)}}_{\infty} \right\}.
	\]
	Since $\gamma^N$ is a polynomial, we have the bound
	\[
	\norm{\gamma^N}_{C^k} \leq M := \norm{\gamma^N}_{C^{N-1}} \qquad \text{for all} \quad k \in \nn. 
	\]
	In particular, for any $\hat{s} \in (-1,1)$, we have $\left|\gamma^{(n)}(\hat{s}) \right| \leq M$, for $0 \leq n \leq (N-1)$, and we define  
	\[
	\delta := \min\limits_{N' \leq n \leq N} \left\{ \left(\frac{\epsilon \abs{\gamma(\hat{s})}}{M (N - N')}\right)^{\frac{1}{n}} \right\}.
	\]
	It follows that 
	\[
	\delta^n \abs{\gamma^{(n)}(\hat{s})} \leq \frac{\epsilon\gamma(\hat{s})}{N-N'} \qquad \text{for all} \quad N' \leq n \leq N.
	\]
	
	Now, let $\hat{\gamma}$ be defined as in Equation \eqref{eq:def_subarc}. Recall that $\hat{\gamma}$ is also analytic on $[-1,1]$ and by differentiating Equation \eqref{eq:subarc_functionaleqn} we have the derivative formula, $\hat{\gamma}^{(n)}(s) = \delta^n \gamma^{(n)}\left(\hat{s} + \delta s\right)$, for all $n \in \nn$. By Taylor's theorem, we obtain another explicit formula for $c_n$ given by
	\[
	c_n = \frac{\hat{\gamma}^{(n)}(0)}{n!} = \frac{\delta^n \gamma^{(n)}(\hat{s})}{n!},
	\]
	and we note that $c_0 = \hat{\gamma}(0) = \gamma(\hat{s})$ does not depend on $\delta$. We have the estimate for the tail ratio of $\hat{\gamma}$: 
	\begin{align*}\label{eq:tailratiobound}
	T_{N'}(c) &= \frac{1}{\norm{c}_{\ell_1}} \sum_{n=N'}^{N-1} \abs{c_n} \\
	&= \frac{1}{\norm{c}_{\ell_1}} \sum_{n=N'}^{N-1} \frac{\delta^n \abs{\gamma^{(n)}(\hat{s})}}{n!} \\
	&\leq \frac{1}{\abs{c_0}} \sum_{n=N'}^{N-1} \frac{\epsilon \abs{\gamma(\hat{s})}}{N - N'} \\
	&= \epsilon\\
	\end{align*}
	which completes the proof.
\end{proof}

Proposition \ref{prop:controltailratio} establishes the fact that we may re-parameterize $\gamma$ on subintervals of $[-1,1]$ with width, $2\delta$, and that as $\delta \to 0$ the tail ratio also approaches zero. We note that $\delta$ does not depend on the subinterval, and therefore, for a fixed $\epsilon$ the number of required subarcs is finite. In particular, no more than $K = \lceil \frac{2}{\delta} \rceil$ subarcs are required. To summarize the usefulness of this result, we present the following algorithm for controlling the spatial truncation error which was implemented for the atlases in this work. 
\begin{enumerate}
	\item Fix a threshold $0 < \epsilon \ll 1$, a cutoff $1 \leq N' < N$, and $K \in \nn$. The threshold and cutoff are both chosen based on the alignment of $\gamma$ with the flow, the degree of the non-linearity in $f$, and the truncation size. In practice, these are problem specific choices which require some ad-hoc experimentation in order to balance computational efficiency and truncation error. 
	\item Following each evaluation step in our algorithm, a boundary arc has the form $\gamma: [-1,1] \to \rr$ which is stored in the computer as a polynomial approximation, $\overbar a = \left(a_0,\dotsc,a_{N-1}\right)$. If $T_{N'}(\overbar a) < \epsilon$, continue to the advection step. 
	\item If $T_{N'}(\overbar a) \geq \epsilon$, specify a partition of $[-1,1]$ into $K$-many subintervals by choosing their endpoints,  $\{s_0,s_1,\dotsc,s_K\}$. Apply the formula in Equation \eqref{eq:def_subarc} to obtain $\{\gamma_1,\dotsc,\gamma_K\}$ where for $1 \leq j \leq K$,  $\gamma_j(s) = \gamma(\hat{s}_j + \delta_j s)$ where $\hat{s}_j = \frac{s_j + s_{j-1}}{2}$ and $\delta_j = \frac{s_{j} - s_{j-1}}{2}$. 
	\item Each resulting subarc which satisfies the tail ratio threshold passes to the advection step. Subarcs which violate the threshold are subdivided again by repeating step 3. By Proposition \ref{prop:controltailratio}, this condition is eventually met for every subarc and the algorithm proceeds to the advection step.  
\end{enumerate}

\subsubsection{Stiffness}
\label{sec:stiffness}
The final numerical consideration which we address is the stiffness problem. We recall that the CRFBP vector field is analytic away from the primary masses which correspond to singularities of Equation \eqref{eq:SCRFBP}. Since this system is Hamiltonian, any trajectory which collides with one of these primaries must blow up in finite time. However, smooth trajectories may pass arbitrarily close to these primaries and as they do, the velocity coordinates, $\dot x, \dot y$, become arbitrarily large.  

Recall that a single boundary arc, $\gamma: [-1,1] \to \rr^4$, is a parameterized manifold of initial data. Then its advected image, $\Gamma:[-1,1] \times [0,1] \to \rr^4$, is a parameterized bundle of trajectory segments. For any $s_0 \in [-1,1]$, $\Gamma(s_0,t)$ parameterizes the trajectory passing through $\gamma(s)$ over the (non-scaled) time interval, $[0,\tau]$. 

Now, suppose that for $s_0 \in [-1,1]$, the trajectory through $\gamma(s_0)$ passes ``close'' to a primary at time $t = t_0$. Then, we have 
\[
\norm{f\left(\Gamma(s_0,t_0)\right)}_{\cc^4} \gg 1.  
\]
Recalling our time-rescaling algorithm described in Section \ref{sec:time_scaling}, it is clear that controlling truncation in the time direction will require taking increasingly shorter time-steps. Of course this is not surprising, however, the difficulty arises from the fact that other choices of $s \in [-1,1]$ often correspond to trajectory segments which remain far away from the primary and our time-rescaling is applies uniformly on $[-1,1]$. Hence, the advection of the entire boundary chart is slowed dramatically whenever any portion of its image approaches a primary. We refer to these charts as {\em stiff}. Obviously, this is a major problem for our ``breadth-first'' approach for computing the manifold atlas. Namely, the integrator gets stuck on the stiff charts causing the computation to stall.  

A naive method for dealing with this is to define the {\em speed} for a boundary chart which is a parameterized curve of the form, 
$\gamma(s) =$ $(x(s),\dot x(s)$, $y(s), \dot y(s) )$, by 
\begin{equation}
\label{eq:speed}
	S(\gamma) = \sup_{s \in [-1,1]} \setof{ \sqrt{\dot x(s)^2 + \dot y(s)^2}}, 
\end{equation}
set a threshold, $\kappa$, and cease advection of $\gamma$ whenever $S(\gamma) > \kappa$. While this fixes the problem of computational efficiency, we also lose large portions of the manifold which remain far from the primaries. Instead, we leverage the manifold subdivision procedure which was already introduced in Section \ref{sec:subdivision} to modify the naive algorithm in order to retain these portions of the manifold as follows. 
\begin{enumerate}
	\item Fix a maximum speed threshold, $\kappa >0$. For each boundary chart, $\gamma$, present after the evaluation step, check that $S(\gamma) \leq \kappa$ and if so, continue to the remeshing step.  
	\item If $S(\gamma) > \kappa$, write $\gamma(s) = \left(x(s),\dot x(s), y(s), \dot y(s)\right)$ and compute 
	\[
	\setof{s \in [-1,1] : \dot x(s)^2 + \dot y(s)^2 - \kappa^2 = 0}. 
	\]
	Since $\dot x, \dot y$ are polynomial approximations, this set is a finite collection of roots of a polynomial which we denote by, $\setof{s_0,\dots,s_K}$. 
	\item For $1\leq j \leq K$, check that $\dot x(s)^2 + \dot y(s)^2 - \kappa^2 < 0$ holds on $[s_j,s_{j+1}]$ and if so, compute $\hat \gamma_j$ as in Equation \eqref{eq:def_subarc} and continue to the remeshing step. Subintervals which fail this check are discarded. 
\end{enumerate}

To summarize, our algorithm identifies regions of the manifold boundary which pass close to a primary by checking the maximum speed. Regions which exceed a threshold are cut away while regions of the nearby boundary continue to be advected. The cut regions cause the apparent holes punched out around each primary in the manifold plots, such as Figures \ref{fig:manifoldsTopDownL0} and \ref{fig:manifoldsTopDownL5}.

\begin{figure}[!t]
\centering
\includegraphics[width=6in]{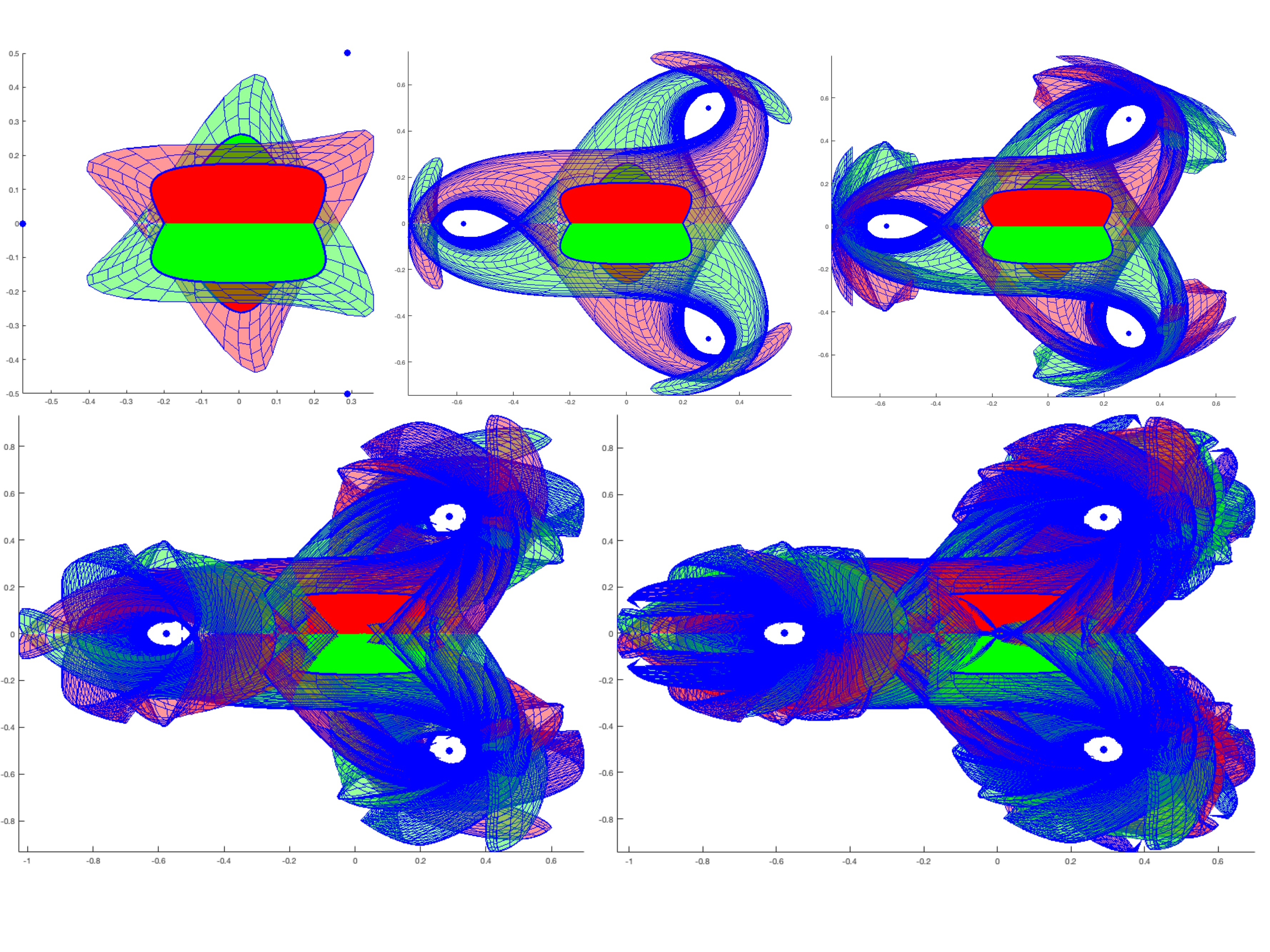}
\caption{\textbf{Atlases at $\mathcal{L}_0$ in the Triple Copenhagen Problem:} 
the center of each frame shows the initial local stable chart (green), and unstable chart (red), computed to order $45$ using the 
parameterization method. The three blue dots in each frame represent the location of 
the primaries. One third of the boundary of each local manifold is meshed into ten analytic arcs and propagated in time with the boundary of each chart illustrated in blue. By Lemma \ref{lem:CRFBP_symmetry}, the rest of the atlas is obtained via $\pm 120$ degree rotations. 
The five frames illustrate the complete atlases obtained after advecting the boundary arcs 
for $\pm 0.25, \pm 1.0, \pm 1.5$ time units (top row)
and $\pm 2.5$ and $\pm 4.0$ time units (bottom row). 
After a fairly short integration time, the resulting 
atlases become complicated enough that visual analysis is difficult or impossible.
This complexity motivates development of the post processing 
schemes described in Section \ref{sec:atlasMining}.
Each chart is approximated using Taylor order $20$ in space and 
$40$ in time.   Runtime and number of charts are as given in 
Table \ref{tab:atlasDataL0}.}\label{fig:manifoldsTopDownL0}
\end{figure}

\subsection{Computational results: manifold atlases for the triple Copenhagen problem} 
\label{sec:manifoldResults}

Performance results for atlas computations at the libration points
$\mathcal{L}_{0}$ and $\mathcal{L}_5$ are given in Tables 
\ref{tab:atlasDataL0} and \ref{tab:atlasDataL5} respectively.  
The computations are performed for the case of equal masses, that is 
for the triple Copenhagen problem.  The tables report the advection time
-- that is the number of time units the boundary of the local parameterizations
are integrated -- as well as the time required to complete the computations
and the number of polynomial charts comprising the atlas. 
All computations were performed on a MacBook Air laptop 
running Sierra version 10.12.6, on a 1.8 GHz Intel Core i5, 
with 8 GB of 1600 MHz DDR3 memory.

\begin{table}[t!]
\caption{\textbf{Atlas Computations at $\mathcal{L}_0$ in the triple Copenhagen problem:} 
each chart is computed to 
polynomial order $20$ in space and order $40$ in time.  Unlike the later computations where we have used a speed threshold of 2, here we set the threshold at 3 to better
illustrate how the atlas size and computation time grow when propagating the stable/unstable manifolds. Additionally, by Lemma \ref{lem:CRFBP_symmetry} we must only consider one third of the boundary of each local manifold, so our initial subdivision into 10 sub arcs actually corresponds to mesh which is 3 times finer. Moreover, the computation time is 3 times faster and the final atlas size is 3 times smaller than for the CRFBP with non-equal masses.}
\label{tab:atlasDataL0}       

\begin{center}
\begin{tabular}{llll}
\hline\noalign{\smallskip}
Integration Time & Run Time (both manifolds) & \# Stable Charts & \# Unstable Charts  \\
\noalign{\smallskip}\hline\noalign{\smallskip}
$\pm 0.25$ & $17.07$ seconds & 39 & 39 \\
$\pm 0.5$ & $37.9$ seconds & 146 & 146 \\
$\pm 0.75$ & $147$ seconds & 497 & 497 \\
$\pm 1.0$ & $4.75$ minutes & 700 & 700 \\
$\pm 1.5$ & $8.3$ minutes & 1579 & 1579 \\
$\pm 2.5$ & $21.8$ minutes &3530& 3493 \\
$\pm 4.0$ & $60.8$ minutes & 9372& 9295 \\
\noalign{\smallskip}\hline
\end{tabular}
\end{center}
\end{table}

\begin{table}[t!]
\caption{\textbf{Atlas Computations at $\mathcal{L}_5$ in the triple Copenhagen problem:} 
each chart is computed to polynomial order $20$ in space and order $40$ in time.
Velocities greater than 2.5 are discarded. 
We consider the entire boundary of the local stable/unstable manifolds, 
and we initially divide into 30 sub arcs.  Because of this 
the computations are roughly $3$ times longer than at $\mathcal{L}_0$.
But we obtain the manifolds at $\mathcal{L}_{4,6}$
by rotational symmetry.}
\label{tab:atlasDataL5}       
\begin{center}
\begin{tabular}{llll}
\hline\noalign{\smallskip}
Integration Time & Run Time (both manifolds) & \# Stable Charts & \# Unstable Charts  \\
\noalign{\smallskip}\hline\noalign{\smallskip}
$\pm 0.5$ & $40.5$ seconds & 124 & 124 \\
$\pm 0.75$ & $57.7$ seconds & 216 & 216 \\
$\pm 1.0$ & $2.3$ minutes & 487 & 487 \\
$\pm 1.5$ & $7.2$ minutes & 634 & 634 \\
$\pm 2.0$ & $15.3$ minutes & 1466 & 1466 \\
$\pm 3.0$ & $32.9$ minutes & 2899& 2899 \\
$\pm 4.0$ & $53$ minutes &4983& 4751 \\
\noalign{\smallskip}\hline
\end{tabular}
\end{center}
\end{table}

\begin{figure}[!t]
\centering
\includegraphics[width=6in]{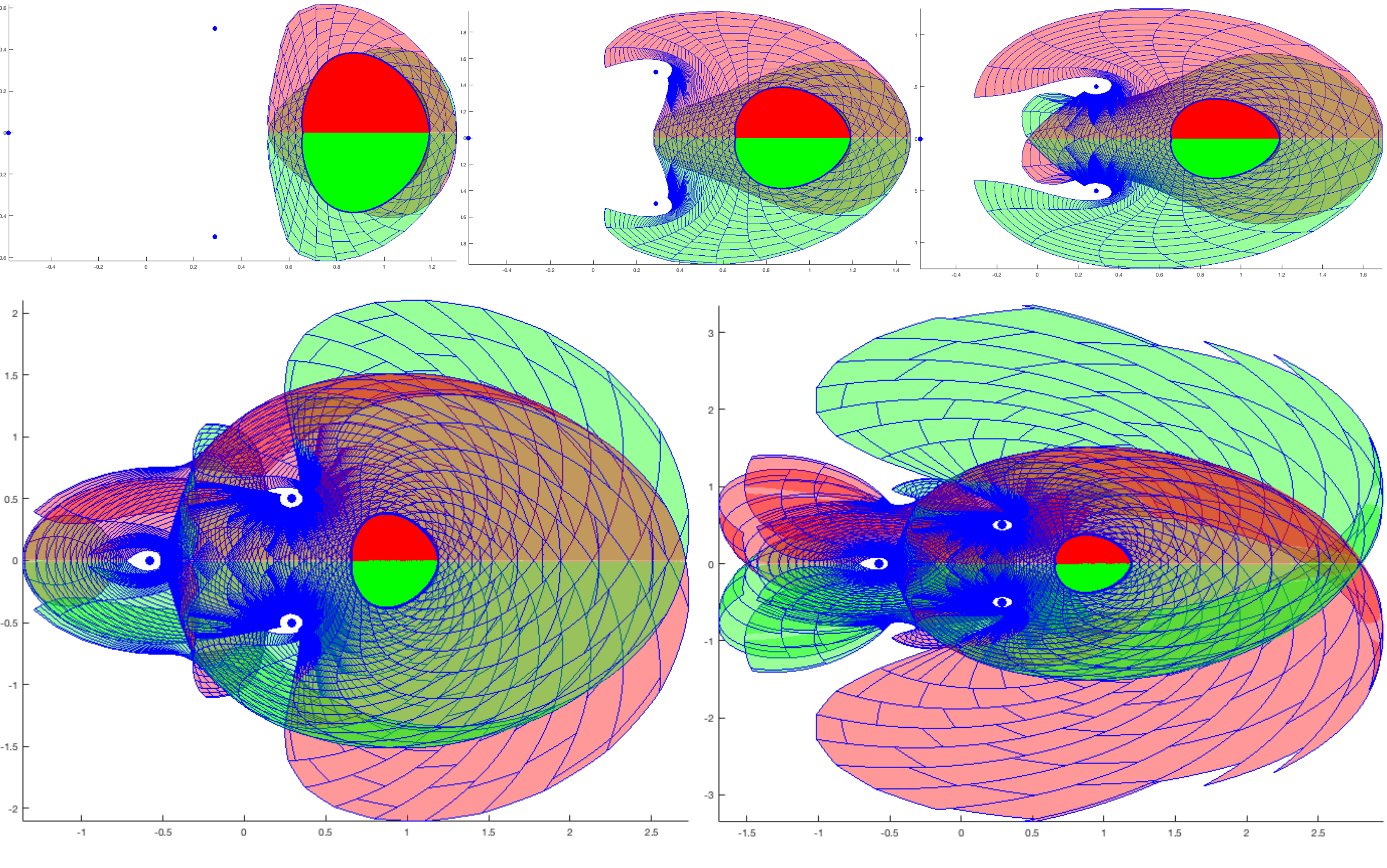}
\caption{\textbf{Atlases at $\mathcal{L}_5$ in the Triple Copenhagen Problem:} 
center of each frame shows the local stable/unstable charts computed using the 
parameterization method (red and green respectively).  
The locations of the primaries are denoted by the blue dots in 
each frame. Parameterizations approximated to polynomial 
order $45$.  The boundary of the local stable/unstable manifold is meshed into thirty
analytic arcs.  The five frames illustrate the atlases obtained by advecting the 
boundary arcs by $\pm 0.75, \pm 1.15, \pm 1.5$ time units (top row)
and by $\pm 3.0$ and $\pm 4.0$ time units (bottom row).
Again it is difficult to analyze the results by eye, and some post processing 
is necessary.  Each chart is approximated using Taylor order $20$ in space and 
$40$ in time.   Runtime and number of charts are as given in 
Table \ref{tab:atlasDataL5}.}\label{fig:manifoldsTopDownL5}
\end{figure}

The resulting atlases for $\mathcal{L}_0$ and $\mathcal{L}_5$
are illustrated in Figures \ref{fig:manifoldsTopDownL0}
and \ref{fig:manifoldsTopDownL5} for various
integration times. The boundaries for the charts are also shown, making 
it clear that the computational effort goes up dramatically 
near the primaries.  Note that the chart boundary lines running out 
of the local parameterizations are actual orbits of the system and 
hence give a sense of the dynamics on the manifold.  
The pictures provide some insight into the dynamics of 
the problem, however, their complexity illustrates the need for more sophisticated search techniques in order to extract further useful qualitative information 
from the atlases.

\section{Homoclinic dynamics in the CRFBP} \label{results}
In this section we discuss connecting orbits found for the 
symmetric $m_1 = m_2 = m_3 = 1/3$ case by searching the 
manifold atlases computed in the previous section.  

%
%
%
%
%


\subsection{Mining the atlases}
\label{sec:mining_the_atlases}
Assume we have computed atlases, $\mathcal{A}^{s,u}$, for the stable/unstable manifolds of $\mathbf{x_0}$.  We are interested in ``mining'' the chart data to find transverse connections. Since each atlas is stored as a collection of polynomial charts, it suffices to identify pairwise intersections between stable and unstable charts. Thus, throughout we assume $\Gamma^{s,u} : [-1,1]^2 \to W^{s,u}(\mathbf{x}_0)$ is a pair of charts which parameterize a portion of the stable/unstable manifold. We write $\Gamma^{s,u}_{1,2,3,4}$ denote the scalar coordinates of each chart.  The following theorem whose proof can be found in \cite{shaneAndJay} provides a computable condition for verifying transverse intersection of a pair of charts. 

\begin{theorem}
\label{thm:transverse_intersection} 
Define $G \colon [-1,1]^3 \to \mathbb{R}^3$
by 
\[
G(s, t, \sigma) := 
\left(
\begin{array}{c}
\Gamma_1^u(s,t) - \Gamma_1^s(\sigma, 0) \\
\Gamma_2^u(s,t) - \Gamma_2^s(\sigma, 0) \\
\Gamma_3^u(s,t) - \Gamma_3^s(\sigma, 0) \\
\end{array}
\right),
\]
and suppose $(\hat s, \hat t, \hat \sigma) \in [-1, 1]^3$ satisfies $G(\hat s, \hat t, \hat \sigma) = 0$.  
If $\Gamma_4^u(\hat s, \hat t)$ and $\Gamma_4^s(\hat \sigma, 0)$ have the same sign, then $\hat{\mathbf{x}} := \Gamma^u(\hat s, \hat t)$ is
homoclinic to $\mathbf{x}_0$. Moreover, if $DG(\hat s, \hat t, \hat \sigma)$ is nonsingular and if $\nabla E(\hat{\mathbf{x}}) \neq 0$
(where $E$ is the CRFBP energy), then the energy level set is a smooth $3$-manifold near $\hat{\mathbf{x}}$ and the stable/unstable 
manifolds of $\mathbf{x}_0$ intersect transversally in the energy manifold. 
\end{theorem}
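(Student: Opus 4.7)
The plan is to prove the two assertions in sequence: first that $\hat{\mathbf{x}}$ actually lies in $W^u(\mathbf{x}_0) \cap W^s(\mathbf{x}_0)$, so that the geometric criterion of Section \ref{sec:twoViewsOfConnections} makes the orbit through $\hat{\mathbf{x}}$ homoclinic, and second that $T_{\hat{\mathbf{x}}} W^u$ and $T_{\hat{\mathbf{x}}} W^s$ together span the tangent space to the energy manifold at $\hat{\mathbf{x}}$.

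For the first claim, the key observation is that every point on $W^{u,s}(\mathbf{x}_0)$ sits in the level set $\Sigma := E^{-1}(E(\mathbf{x}_0))$, since the Jacobi integral is conserved along orbits and these invariant manifolds are foliated by orbits asymptotic to $\mathbf{x}_0$. The equation $G(\hat s,\hat t,\hat\sigma)=0$ forces the first three coordinates of $\Gamma^u(\hat s,\hat t)$ and $\Gamma^s(\hat\sigma,0)$ to coincide. Because the energy relation in Equation \eqref{eq:CRFB_energy} is quadratic in the fourth coordinate $\dot y$, fixing the first three coordinates of a point in $\Sigma$ determines $\dot y$ only up to sign; the hypothesis that $\Gamma^u_4(\hat s,\hat t)$ and $\Gamma^s_4(\hat\sigma,0)$ share a sign then rules out the spurious root and forces the full equality $\Gamma^u(\hat s,\hat t) = \Gamma^s(\hat\sigma,0)$, placing $\hat{\mathbf{x}}$ in $W^u \cap W^s$.

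For transversality I would next compute the two tangent planes at $\hat{\mathbf{x}}$. The time variable of each atlas chart is conjugate to the flow by construction (Section \ref{sec:advection}), so differentiating the relation $\Gamma(s,t) = \Phi(\gamma(s), t/\tau)$ with respect to $t$ gives $\partial_t \Gamma^u(\hat s,\hat t)$ and $\partial_t \Gamma^s(\hat\sigma,0)$ as nonzero scalar multiples of the common vector $f(\hat{\mathbf{x}})$. Consequently $T_{\hat{\mathbf{x}}} W^u = \mathrm{span}\{\partial_s \Gamma^u(\hat s,\hat t),\, f(\hat{\mathbf{x}})\}$ and $T_{\hat{\mathbf{x}}} W^s = \mathrm{span}\{\partial_\sigma \Gamma^s(\hat\sigma,0),\, f(\hat{\mathbf{x}})\}$, so their sum is the span of the three vectors $\partial_s \Gamma^u(\hat s,\hat t),\; \partial_t \Gamma^u(\hat s,\hat t),\; \partial_\sigma \Gamma^s(\hat\sigma,0)$. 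Projected onto the first three phase-space coordinates these are, up to a sign on the stable column, precisely the columns of $DG(\hat s,\hat t,\hat\sigma)$; nonsingularity of $DG$ then yields linear independence of the projections in $\rr^3$, hence of the originals in $\rr^4$, so the sum has dimension three.

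Finally, $\nabla E(\hat{\mathbf{x}}) \neq 0$ together with the regular value theorem make $\Sigma$ a smooth $3$-manifold near $\hat{\mathbf{x}}$, and since both $W^u$ and $W^s$ are contained in $\Sigma$ their tangent planes lie inside $T_{\hat{\mathbf{x}}}\Sigma$. The three-dimensional sum just constructed is therefore forced to equal $T_{\hat{\mathbf{x}}}\Sigma$, which is by definition transverse intersection of $W^u$ and $W^s$ inside the energy manifold. The step most likely to require care is the identification $\partial_t \Gamma = (\mathrm{const})\cdot f$, since this is the hinge converting the algebraic nondegeneracy condition on $DG$ into the geometric statement of transversality; everything else reduces to energy conservation and elementary linear algebra.
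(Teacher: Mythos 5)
Your proof is correct, and it uses what is surely the intended route (the paper defers the proof to \cite{shaneAndJay}, so there is no in-text proof to compare against). The two pivot points of the argument are exactly the right ones: (i) both local manifolds live in the single energy level $E^{-1}(E(\mathbf{x}_0))$, and since $E(x,\dot x,y,\dot y) = -(\dot x^2+\dot y^2)+2\Omega(x,y)$ is quadratic in the fourth coordinate $\dot y$, fixing the first three coordinates determines $\dot y$ up to sign, so the sign condition upgrades $G=0$ to full equality in $\mathbb{R}^4$; and (ii) the time-derivative of each advected chart is a nonzero multiple of $f(\hat{\mathbf{x}})$, so $T_{\hat{\mathbf{x}}}W^u + T_{\hat{\mathbf{x}}}W^s = \mathrm{span}\{\partial_s\Gamma^u, \partial_t\Gamma^u, \partial_\sigma\Gamma^s\}$, whose projection to the $(x,\dot x,y)$ coordinates is (up to a harmless sign in the last column) $DG(\hat s,\hat t,\hat\sigma)$. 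Nonsingularity of $DG$ then forces the span to be three-dimensional, and since $\nabla E(\hat{\mathbf{x}})\neq 0$ makes the level set a smooth $3$-manifold, dimension counting pins the sum of the tangent spaces to all of $T_{\hat{\mathbf{x}}}\Sigma$. One small point worth stating explicitly, which you elide: for the Jacobi integral, $\nabla E(\hat{\mathbf{x}})\neq 0$ is equivalent to $f(\hat{\mathbf{x}})\neq 0$ (both fail precisely at libration points), which is what guarantees $\partial_t\Gamma^{u,s}\neq 0$ and makes your identification of the tangent spaces legitimate.
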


We emphasize that Theorem \ref{thm:transverse_intersection} provides a computable condition for verifying a transverse intersection using rigorous numerics. However, we will use the same theorem to detect transverse intersections in the purely numerical setting of this paper. This is made explicit in the following algorithm utilized in the mining scheme for all results in the present work. 

Assume, $\Gamma^{s,u}, G$ are as defined in Theorem  \ref{thm:transverse_intersection}.  Apply Newton's method to find an approximate root of $G$. 
Let $\hat{v} = \left( \hat{s}, \hat{t}, \hat{\sigma}\right)$ 
denote an approximate solution with $G(\hat{v}) \approx 0$, and check the following conditions: 
\begin{enumerate}
	\item $\Gamma_4^u(\hat{s},\hat{t})$, and $\Gamma_4^s(\hat{\sigma},0)$ are both ``far'' from 0. 
	\item $\Gamma_4^u(\hat{s},\hat{t})$, and $\Gamma_4^s(\hat{\sigma},0)$ have the same sign.
\end{enumerate}

If condition 1 holds without condition 2, then these charts are non-intersecting. In this case, these charts lie on separated portions of the stable/unstable manifolds which are symmetric with respect to the fourth coordinate. We refer to these as ``pseudo-intersections''. On the other hand, if both conditions hold, then we conclude from Theorem \ref{thm:transverse_intersection} that we have numerically found a transverse homoclinic for $\mathbf{x}_0$ passing through $\Gamma^u(\hat{s}, \hat{t}) = \mathbf{\hat{x}}$.  

Note that condition 1 serves two purposes in this setting. First, it serves as an easily computable condition for checking that $\nabla E(\mathbf{\hat{x}}) \neq 0$ as required in the theorem. This follows by noting that 
\[ 
\pi_4 \circ \nabla E(\mathbf{\hat{x}}) = \mathbf{\hat{x}}_4 = \Gamma_4^u(\hat{s},\hat{t})
\]
so it follows that $\nabla E(\hat{\mathbf{x}}) \neq 0$ is satisfied automatically whenever condition 1 is satisfied. 

In addition, condition 1 gives us some confidence that the sign difference from condition 2 holds due to transversality of the homoclinic, as opposed to numerical error. Indeed, if condition 1 is not satisfied, then  $\Gamma_4^u(\hat{s},\hat{t})$, and $\Gamma_4^s(\hat{\sigma},0)$ take values near zero in which case sign errors for either coordinate are likely due to integration errors. In this case, even if condition 2 is satisfied we are unable to trust the result, hence unable to conclude whether the zero of $G$ corresponds to a transverse intersection or a pseudo-intersection. Fortunately, this situation can be remedied as discussed in Remark \ref{rem:following_connections}. As a result, we are free to  choose our threshold for what is meant by ``far'' in the statement of condition 1 very conservatively which leads to a great deal of confidence that our mining algorithm returns only transverse homoclinic orbits. 

We further increase our confidence in the approximate connection by using it as the input for a BVP solver based on Newton's method, which allows us to refine our approximation to nearly machine precision and it is the BVP formulation to which we then apply continuation methods.
Every connection reported in this section has been so certified and none of the connections identified from the mining algorithm had a BVP which failed to converge. In other words, the mining algorithm did not return any false homoclinics.

\subsection{Efficient atlas mining} \label{sec:atlasMining}
It is not desirable to check every pair of charts from each atlas using the above procedure,
and 
we introduce two methods which significantly reduce the number of chart pairs which must be checked via the Newton intersection scheme based on Theorem \ref{thm:transverse_intersection}.

\subsubsection{The $\ell_1$ box approximation}
The first method for improving the mining efficiency is to apply a coarse preprocessing step to each pair of charts which must be compared. The main idea is based on the fact that for most pairs of charts which do not intersect, these charts will ``obviously'' not intersect in the sense that their images in phase space will be very far apart. We exploit this using a fast algorithm for identifying many such pairs, and in this case skip the slower Newton-based intersection attempt. 

To be more precise, consider an arbitrary polynomial  $P: [-1,1]^2 \to \rr$  defined by 
\[
P(s,t) = \sum_{m = 0}^{M} \sum_{n = 0}^{N} a_{m,n}s^nt^m \quad a_{m,n} \in \rr.
\]
We define the {\em $\ell_1$ box} for $P$ to be 
\[
B_P = [a_{0,0} - r, a_{0,0} + r] \qquad \text{where } \ r =  \sum_{(m,n) \neq (0,0)} \abs{a_{m,n}}.
\]
The significance of $B_P$ is that we have the bound
\[
\abs{P(s,t) - a_{0,0}} \leq r \qquad \text{for all } (s,t) \in [-1,1]^2
\]
or equivalently, $P(s,t) \in B_P$ for all $(s,t) \in [-1,1]^2$. Analogously, we extend this to higher dimensions component-wise and apply this to geometrically rule out pairs of charts which can not intersect because their images are ``well separated''. Specifically, consider a pair of stable/unstable charts
\[
\Gamma^s(s,t) = \sum_{m = 0}^{M} \sum_{n = 0}^{N} a_{m,n}s^nt^m \qquad \Gamma^u(s,t) = \sum_{m = 0}^{M} \sum_{n = 0}^{N} b_{m,n}s^nt^m.
\]
which have $\ell_1$ boxes described by rectangles in $\rr^4$ and satisfying $\Gamma^s(s,t) \in B_{\Gamma^s}$, and $\Gamma^u(s,t) \in B_{\Gamma^u}$. Then, if the set distance, $d(B_{\Gamma^s}, B_{\Gamma^u})$ is large enough, we can conclude that $\Gamma^s,\Gamma^u$ do not intersect. 

Using $\ell_1$ boxes has two advantages. The first is that computing and checking $\ell_1$ boxes for pairwise intersections is much faster than our Newton-like intersection method. This is due to the fact that for each coordinate the box radius, $r$, is equivalently computed as 
\[
r = a_{0,0} + \norm{P}_{\ell_1} - \abs{a_{0,0}}
\]
which is extremely fast to compute using modern implementations. Determining whether two boxes intersect or not is also fast due to efficient interval arithmetic libraries such as the INTLAB library for MATLAB \cite{Ru99a} which was utilized in our implementation.

The second advantage is that an $\ell_1$ box is typically a very coarse enclosure for the true values of $P$. This ``problem'' is often referred to as the data-dependence problem or the wrapping effect. In our situation however, we consider the coarseness to be a feature since it makes our numerical estimates more conservative. Thus, we are able to rule out many pairs of charts which clearly do not intersect without eliminating false negatives.

In practice, a single pairwise $\ell_1$ box intersection check is approximately 1,000 times more efficient than the Newton-based scheme and this method rules out around 90 percent of non-intersecting chart pairs. Moreover, the $\ell_1$ box for each chart can be computed only once during the atlas construction and stored. This leaves the cost of a single box intersection check as the only significant computational operation. 

Finally, we remark that once $\ell_1$ boxes have been computed and stored for each chart in both atlases, one can make careful use of the triangle inequality to reduce the computation even further. This provides roughly an additional order of magnitude improvement in the efficiency of our algorithm which could be crucial to the feasibility of mining extremely large atlases. However, we took limited advantage of this fact in the present work. 

\subsubsection{Fundamental domains}
The other main source of efficiency gain in our algorithm relies on using the dynamics explicitly. Recalling our notation in Section \ref{sec:atlas}, assume $\mathcal{A}^s$ is the stable manifold atlas which we have computed to include the $L_s^{\rm th}$ generation and let $W^{s}_{k}(\mathbf{x}_0)$ denote the $k^{\rm th}$ generation local stable manifold. Then, $W^{s}_k(\mathbf{x}_0)$ is a fundamental domain for $W^{s}(\mathbf{x}_0)$. In other words, if $\mathbf{x}(t)$ is any orbit which satisfies $\lim\limits_{t \to \infty} \mathbf{x}(t) = \mathbf{x}_0$ and if $\mathbf{x}(0) \neq \mathbf{x}_0$, then there exists $t_k \in \rr$ such that $\mathbf{x}(t_k) \in W^{s}_k(\mathbf{x}_0)$. Of course, the same claim holds for the unstable manifold. Taken together, if we assume we have computed the unstable manifold, $\mathcal{A}^u$, up to the $L_u^{\rm th}$ generation, then we have the following observation.
\begin{prop}
	\label{prop:fundamental_domain}
	Let $\mathbf{x}(t)$ be a transverse homoclinic to $\mathbf{x}_0$. Then $\mathbf{x}(t) \in W^s(\mathbf{x}_0) \cap W^u(\mathbf{x}_0)$ for all $t \in \rr$. Let $W^{s,u}_0(\mathbf{x}_0),W^{s,u}_1(\mathbf{x}_0),\dotsc,W^{s,u}_{L_{s,u}}(\mathbf{x}_0)$ denote the generation sequence of local stable/unstable manifolds. Then exactly one of the following is true. 
	\begin{itemize}
		\item There exists $k_s,k_u$ and $t_0 \in \rr$, such that $\mathbf{x}(t_0) \in W^s_{k_s}(\mathbf{x}_0)\bigcap W^u_{k_u}(\mathbf{x}_0)$ and $k_s + k_u$ is constant for all pairs $(k_s,k_u)$ which satisfy this property. 
		\item There exists $t_0\in \rr$ such that for all $0 \leq k_s \leq L_s$, and $0\leq k_u \leq L_u$,  we have 
	\[
	\mathbf{x}((-\infty, t_0)) \bigcap W^s_{k_s}(\mathbf{x}_0) = \emptyset \quad \text{and} \quad \mathbf{x} (( t_0,\infty))\bigcap W^u_{k_u}(\mathbf{x}_0) = \emptyset.
	\]
	\end{itemize}
\end{prop}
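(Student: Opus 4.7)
The plan is to exploit the flow invariance of $W^{s,u}(\mathbf{x}_0)$ together with the nesting $W^{s,u}_0 \subset W^{s,u}_1 \subset \cdots \subset W^{s,u}_{L_{s,u}}$ and the way each successive generation is produced by advection of the previous boundary. The key structural lemma to establish first is a \emph{one--sided invariance}: because $W^s_k$ is obtained by advecting $\partial W^s_{k-1}$ backward along the flow (to grow the stable manifold outward), the set $W^s_{L_s}$ is forward invariant on $W^s(\mathbf{x}_0)$, with forward flow only moving points into earlier generations. The symmetric statement is that $W^u_{L_u}$ is backward invariant on $W^u(\mathbf{x}_0)$.

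Using this, the dichotomy follows from a standard alternative argument. Define
\[
t_s^{*} := \inf \setof{ t \in \rr : \mathbf{x}(t) \in W^s_{L_s}(\mathbf{x}_0)},
\qquad
t_u^{*} := \sup \setof{ t \in \rr : \mathbf{x}(t) \in W^u_{L_u}(\mathbf{x}_0)}.
\]
Since $\mathbf{x}(t) \to \mathbf{x}_0$ as $t \to +\infty$ and $W^s_0 \subset W^s_{L_s}$ is a neighborhood of $\mathbf{x}_0$ in $W^s(\mathbf{x}_0)$, we have $t_s^{*} < \infty$, and by forward invariance $\mathbf{x}(t) \in W^s_{L_s}$ for all $t \geq t_s^{*}$. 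Symmetrically $t_u^{*} > -\infty$ and $\mathbf{x}(t) \in W^u_{L_u}$ for all $t \leq t_u^{*}$. If $t_s^{*} \leq t_u^{*}$ then any $t_0 \in [t_s^{*}, t_u^{*}]$ satisfies $\mathbf{x}(t_0) \in W^s_{L_s} \cap W^u_{L_u}$, yielding the first alternative with $(k_s, k_u) = (L_s, L_u)$. If $t_s^{*} > t_u^{*}$, any $t_0 \in (t_u^{*}, t_s^{*})$ witnesses the second alternative, since forward invariance gives $\mathbf{x}((-\infty, t_0)) \cap W^s_{L_s} = \emptyset$, which by nesting is disjoint from every $W^s_{k_s}$, and backward invariance gives the analogous statement for $W^u_{k_u}$.

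The remaining and technically most delicate step is the invariance of $k_s + k_u$ in the first alternative. The plan here is to use the dynamical conjugacy of the parameterization method, Equation \eqref{eq:flowConj}, propagated through the advection recursion: every point $\mathbf{y}$ of $W^s_{k}\setminus W^s_{k-1}$ inherits a well-defined forward exit time to the preceding generation, and analogously for $W^u$. For the minimal indices $k_s^{*}(t_0), k_u^{*}(t_0)$ containing $\mathbf{x}(t_0)$, increasing $t_0$ pushes the orbit into earlier stable generations and later unstable ones in lock-step, so the pair changes by $(-1,+1)$ each time a generation boundary is crossed and $k_s^{*} + k_u^{*}$ is preserved. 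The invariant is to be seen as the discrete shadow of the exact continuous conserved quantity $T_s(\mathbf{x}(t_0)) + T_u(\mathbf{x}(t_0))$, where $T_{s,u}$ are the forward/backward flow times to $W^{s,u}_0$.

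The main obstacle I anticipate lies precisely in this last step. Because the atlas algorithm allows variable advection times $\tau_j$ and adaptive subdivision, the integer generation index is only loosely tied to continuous flow time, and making the lock-step trade $(-1,+1)$ fully rigorous requires indexing each chart by its cumulative advection time from the initial parameterization and then transferring this information along the single orbit $\mathbf{x}(t)$. Once this bookkeeping is set up, the constancy of $k_s^{*} + k_u^{*}$ follows from the conservation of $T_s + T_u$ along the homoclinic.
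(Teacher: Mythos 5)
The paper does not actually supply a proof of Proposition~\ref{prop:fundamental_domain}; it is stated as an observation and followed only by interpretive discussion, so there is no argument in the text against which to compare yours. Your treatment of the dichotomy itself is correct: defining $t_s^{*}$ and $t_u^{*}$, noting they are finite because $W^{s,u}_0(\mathbf{x}_0)$ are neighborhoods of $\mathbf{x}_0$ in $W^{s,u}(\mathbf{x}_0)$, and splitting on $t_s^{*} \leq t_u^{*}$ versus $t_s^{*} > t_u^{*}$ using the one-sided flow invariance of $W^s_{L_s}$ and $W^u_{L_u}$ yields exactly the two alternatives, cleanly.

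The real gap is the last step, and it is not merely a bookkeeping nuisance. The lock-step claim---that moving $t_0$ forward decrements the minimal $k_s$ and increments the minimal $k_u$ at the same instant---has no basis: the stable and unstable atlases are advanced by independently chosen, adaptive time-steps $\tau_j$ (Section~\ref{sec:time_scaling}), so the orbit may cross two or more unstable generation boundaries between consecutive stable ones. If along the orbit the stable generations each span one flow-time unit while the unstable ones each span three, the pairs $(k_s,k_u)$ admitting a common $t_0$ and minimal in either coordinate form a staircase with non-constant sum $k_s + k_u$. The continuous quantity $T_s + T_u$ you correctly identify is conserved, but it does not descend to a constant integer invariant without synchronizing the time-steps. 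To complete a proof one must either add the hypothesis that every generation advances by a common fixed $\tau$ in both atlases (so $W^{s,u}_k$ is the time-$k\tau$ backward/forward flow image of $W^{s,u}_0$), or weaken the first alternative to an existence statement (a pair $(k_s,k_u)$ exists, dropping the claim that the sum is constant). As written, the constancy of $k_s + k_u$ is not provable because it is not in general true for the paper's adaptive algorithm.
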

Proposition \ref{prop:fundamental_domain}  says that any transverse homoclinic for $\mathbf{x}_0$ satisfying the second condition is a connection which does not intersect in the atlases which we have computed. Restricting to those that do, this proposition says that there is a ``first'' generation for both the stable and unstable atlases for which the connection will appear.  

The significance of this situation is that we need only do pairwise comparisons between stable/unstable charts one generation at a time. Thus, the computational complexity for mining intersections between the two atlases has computational complexity of order $\mathcal{O}(K_sK_u (L_s + L_u))$ where $K_s,K_u$ are the sizes of the largest stable/unstable generations respectively. This is a dramatic improvement over the naive solution of checking every pair in both atlases which has complexity on order $\mathcal{O}(L_sL_uK_sK_u)$. 

\begin{remark}
	We note that often the atlases we compute in practice do not technically satisfy the fundamental domain property. This is due to the fact that sections of manifold boundary which pass near a primary are ``cut out'' as described in Section \ref{sec:stiffness}. Nevertheless, this has no impact on our mining algorithm. Specifically, each generation is still a fundamental domain for the subset of the global manifold which satisfies the speed constraint. Thus, mining for connections via ``leapfrogging'' through pairwise generations is still assured to find all connections which are present in the computed atlases, and therefore, all connections which satisfy the speed constraint. 
\end{remark}

\begin{remark}
	\label{rem:following_connections}
	The result in Proposition \ref{prop:fundamental_domain} gives rise to a natural mining algorithm. Namely, at each generation, all chart pairs are compared and transverse intersections are identified. It follows that once a transverse intersection is identified, then the next/previous generation must also contain an orbit segment corresponding to the same homoclinic. Hence, in addition to gaining a computational speedup, exploiting the fundamental domain property also ensures that all homoclinics identified are distinct. This follows from the existence of the minimum value for $k_s + k_u$ in Proposition \ref{prop:fundamental_domain}. 
	
	Furthermore, this observation yields a method of resolving the ambiguous case in which the Newton-intersection method finds a zero for $G$ but condition 1 from Section \ref{sec:mining_the_atlases} is not satisfied. Specifically, if $G(\hat{s},\hat{t},\hat{\sigma}) \approx 0$ and $\Gamma^u(\hat{s},\hat{t}) \approx \Gamma^s(\hat{\sigma},0) \approx 0$, then we may follow the suspected intersection through earlier/later generation charts until the sign condition can be verified or refuted in appropriate predecessor/successor charts. 
Lastly, we mention that by storing ``parent/child'' information about the charts in the atlas, we can perform the search just 
described in post-processing. 
\end{remark}

\begin{figure}[!t]
\centering
\includegraphics[width=5.5in]{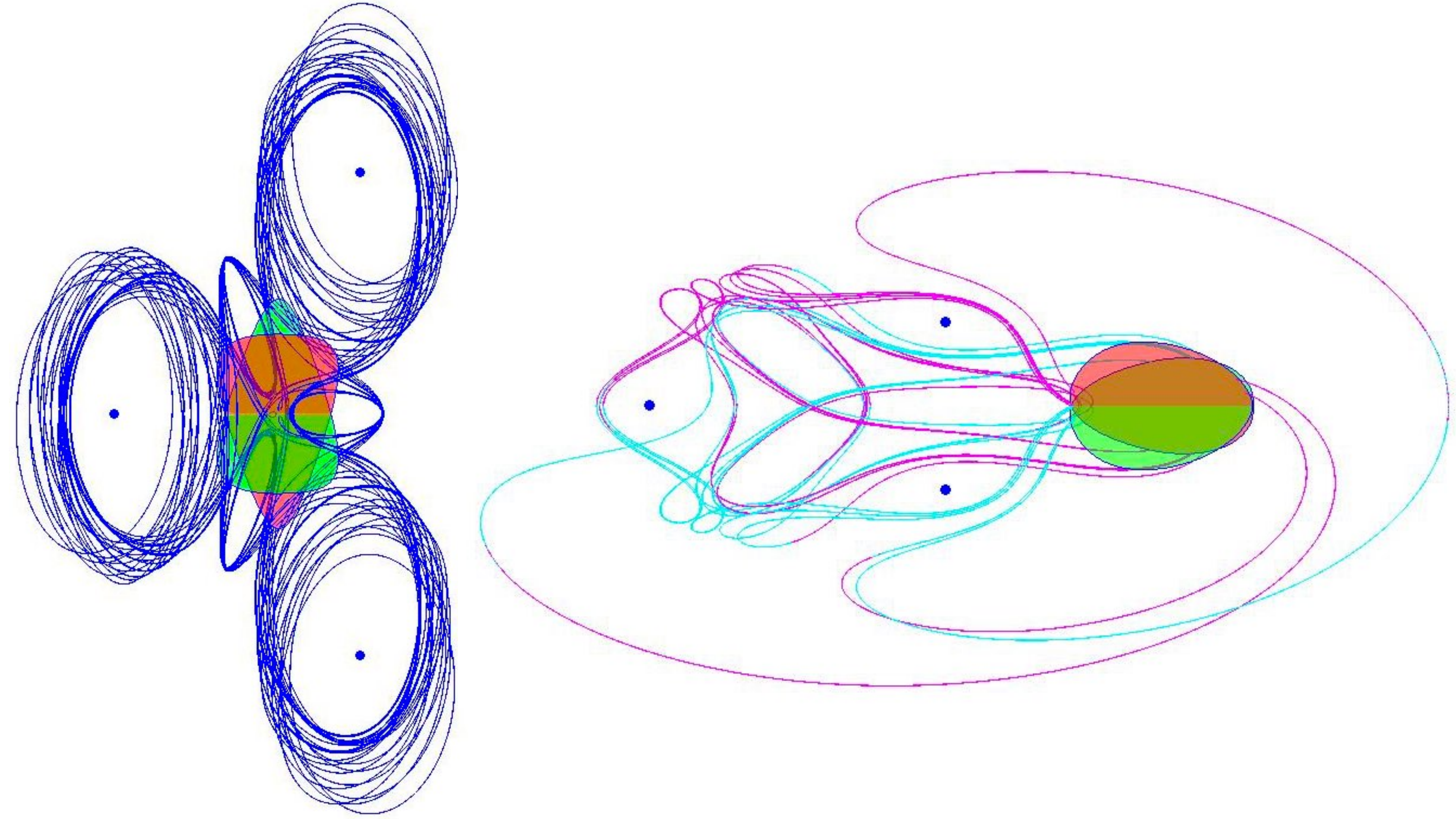}
\caption{\textbf{Asymptotic orbits in configuration space -- top down view of the connections:}
Red and green disks represent the parameterized local stable/unstable manifolds at 
the libration points.  The blue/magenta curves represent the portion of the connecting orbit
off the local invariant manifolds --  that is, the part found by solving the projected boundary 
value problem as discussed in Section \ref{sec:twoViewsOfConnections}.
Left -- 42 shortest homoclinic connecting orbits at $\mathcal{L}_0$ (up to rotational symmetry).
Another $84$ connections are obtained by $\pm 120$ degree rotations. 
The $\pm 120$ rotations are not plotted as they only thicken the blue shaded region.  
Right -- 23 shortest homoclinic connecting orbits at $\mathcal{L}_5$.
Another $46$ connections at $\mathcal{L}_4$ and $\mathcal{L}_6$ are
obtained by $\pm 120$ degree rotations. 
The initial guess for the boundary value solver come from atlases 
obtained by integrating the local unstable/stable manifolds for $\pm T = 5$ time units.
The mining procedure was discussed in Section 5.2.
All reference to color refers to the online version.
}\label{fig:connectionsTopDown}
\end{figure}

\begin{table} 
	\caption{\textbf{Classification of the connecting orbits for $\mathcal{L}_0$:}  
		Advecting one third of the boundary of the local stable/unstable manifolds 
		for $T = \pm 5$ time units and imposing
		a speed threshold of $2$ reveals the 42 homoclinic connections 
		illustrated in the left frame of Figure \ref{fig:connectionsTopDown}.
		The rows of the table give data for the homoclinics, ordered by connection time.
		For each orbit described in the table there are two additional orbits with exactly the same
		connection time, obtained by $\pm 120$ degree rotations.
		Taken with their symmetric counterparts, the orbits given here
		 are are all of the connecting orbits satisfying the connection time and speed constraints. 
		The 3 columns in the table report a connection's order of appearance, its connection time, and our qualitative  
		description of the connection as a word built from the two shortest homoclinics -- 
		the letters shown in Figure \ref{fig:L0_letters}. The connections associated with longer words are 
		separated and illustrated in Figures \ref{fig:connectionsPage1},
		\ref{fig:connectionsPage2},
		\ref{fig:connectionsPage3},
		\ref{fig:connectionsPage4}, and
		\ref{fig:connectionsPage5}.}
	\label{tab:L0connectionClassification}
	\hfill
	\begin{tabular}{lll}
		\hline\noalign{\smallskip}
		Connection & Connection Time &   Letter or Word    \\
		\noalign{\smallskip}\hline\noalign{\smallskip}
		$1$st &  $1.717$  & $L_{0A}$   \\
		$2$nd & 2.331 & $L_{0B}$ \\
		$3$rd  & 4.198 & $L_{0A^+} \cdot L_{0A}$ \\
		$4$th  & 4.520 & $L_{0A^+} \cdot L_{0B}$ \\
		$5$th & 4.715 & $L_{0B}^2$ \\
		$6$th & 5.643 & $L_{0A}^2$ \\
		$7$th & 6.132 & $L_{0A} \cdot L_{0B}$ \\
		$8$th & 6.132 & $L_{0B^-} \cdot L_{0A}$ \\
		$9$th & 6.583 & $L_{0A^-} \cdot L_{0A}$ \\
		$10$th & 6.627 & $L_{0A^+} \cdot L_{0B} \cdot L_{0A}$ \\
		$11$th & 6.628 & $L_{0B^-} \cdot L_{0B}$ \\
		$12$th & 6.684 & $L_{0A^-} \cdot L_{0A^+} \cdot L_{0A}$ \\
		$13$th & 6.760 & $L_{0A^+} \cdot L_{0B}^2$ \\
		$14$th & 6.846 & $L_{0B}^3$ \\
		$15$th & 7.009 & $L_{0B^+} \cdot L_{0A^+} \cdot L_{0A}$ \\
		$16$th & 7.336 & $L_{0B^+} \cdot L_{0A^+} \cdot L_{0B}$ \\
		$17$th & 7.038 & $L_{0B^+} \cdot L_{0A}$ \\
		$18$th & 7.038 & $L_{0A^-} \cdot L_{0B}$ \\
		$19$th & 7.490 & $L_{0B^+} \cdot L_{0B}$ \\
		$20$th & 8.119& $L_{0B} \cdot L_{0A}$ \\
		$21$st & 8.125 & $L_{0A^+}^2 \cdot L_{0A}$ \\
		\noalign{\smallskip}\hline
	\end{tabular}
	\hfill
	\begin{tabular}{lll}
		\hline\noalign{\smallskip}
		Connection & Connection Time &   Letter or Word    \\
		\noalign{\smallskip}\hline\noalign{\smallskip}
		$22$nd & 8.125 & $L_{0A^+} \cdot L_{0A}^2$ \\
		$23$rd & 8.296 & $L_{0A} \cdot L_{0B} \cdot L_{0A}$ \\
		$24$th & 8.448 & $L_{0B} \cdot L_{0A}^2$ \\
		$25$th & 8.453 & $L_{0A} \cdot L_{0B}^2$ \\
		$26$th & 8.453 & $L_{0B^-}^2 \cdot L_{0A}$ \\
		$27$th & 8.499 & $L_{0B}^3 \cdot L_{0A}$ \\
		$28$th & 8.499 & $L_{0A^+} \cdot L_{0B}^3$ \\
		$29$th & 8.583 & $L_{0B}^2$ \\
		$30$th & 8.614 & $L_{0A^+} \cdot L_{0A} \cdot L_{0B}$ \\
		$31$st & 8.732 & $L_{0A^+} \cdot L_{0B}^3$ \\
		$32$nd & 8.794 & $L_{0A} \cdot L_{0B^-} \cdot L_{0B}$ \\
		$33$rd & 8.937 & $L_{0B} \cdot L_{0A} \cdot L_{0B}$ \\
		$34$th & 8.953 & $L_{0B^-} \cdot L_{0B}^2$ \\
		$35$th & 8.953 & $L_{0B^-}^2 \cdot L_{0B}$ \\
		$36$th & 9.065 & $L_{0A} \cdot L_{0A^-} \cdot L_{0A}$ \\
		$37$th & 9.340 & $L_{0A^-} \cdot L_{0B}^2$ \\
		$38$th & 9.88 & $L_{0B^-} \cdot L_{0A^-} \cdot L_{0A}$ \\
		$39$th & 9.444 & $A^- \cdot B^+ \cdot A^+ \cdot B$ \\
		$40$th & 9.495 & $A \cdot A^- \cdot A^+ \cdot B$ \\
		$41$st & 9.579 & $B^+ \cdot A^+ \cdot B^2$ \\
		$42$nd & 9.579 & $(B^+)^2 \cdot A^+ \cdot B$ \\
		\noalign{\smallskip}\hline
	\end{tabular}
	\hfill
\end{table}


\begin{table} 
	\caption{\textbf{Classification of the connecting orbits for $\mathcal{L}_5$:}  
		The 23 homoclinic connections which appear on the right side of Figure \ref{fig:connectionsTopDown} satisfying the same connection time and speed constraints as in the $\mathcal{L}_0$ case. 
	In this case, the 120 degree symmetry does not produce additional connections for $\mathcal{L}_5$ but rather, rotation of each connection produces symmetric homoclinics for both $\mathcal{L}_4$ and $\mathcal{L}_6$. The columns are similar to those in Table \ref{tab:L0connectionClassification} and the longer words associated with $\mathcal{L}_5$ homoclinics illustrated in Figures \ref{fig:connectionsL5_words} and \ref{fig:connectionsL5_long}.} 
	\label{tab:L5connectionClassification}
	\hfill
\begin{tabular}{lll}
\hline\noalign{\smallskip}
Connection & Connection Time &   Letter or Word    \\
\noalign{\smallskip}\hline\noalign{\smallskip}
 $1$st  and $2$nd&  $4.802$  & $L_{5A}$ and $L_{5B}$   \\
 $3$rd and $4$th &  $4.943$  & $L_{5C}$ and $L_{5D}$   \\
 $5$th and $6$th &  $5.261$  & $L_{5E}$ and $L_{5F}$   \\
 $7$th & 6.028 & $L_{5D} \cdot L_{5C}$ \\
 $8$th & 8.204 & $L_{5A} \cdot L_{5B}$ \\
 $9$th & 8.331 & $L_{5A} \cdot L_{5D}$ \\
 $10$th & 8.456 & $L_{5C} \cdot L_{5D}$ \\
 $11$th & 8.917 & $L_{5E} \cdot L_{5F}$ \\
 $12$th & 8.934 & $L_{5A} \cdot L_{5D}$ \\
  $13$th & 9.156 & $L_{5C} \cdot L_{5D}$ \\
 \noalign{\smallskip}\hline
\end{tabular}
\hfill
\quad 
 \begin{tabular}{lll}
\hline\noalign{\smallskip}
Connection & Connection Time &   Letter or Word    \\
\noalign{\smallskip}\hline\noalign{\smallskip}
 $14$th & 9.324 & $L_{5E} \cdot L_{4E} \cdot L_{5C}$ \\
 $15$th & 9.363 & $L_{5A} \cdot L_{5D} \cdot L_{5C}$ \\
 $16$th & 9.387 & $L_{5A} \cdot L_{6A} \cdot L_{5B}$ \\
 $17$th & 9.429 & $L_{5C}^2$ \\
 $18$th & 9.429 & $L_{5D}^2$ \\
 $19$th & 9.487 & $L_{5D} \cdot L_{5C} \cdot L_{5D}$ \\
 $20$th & 9.487 & $L_{5C} \cdot L_{5D} \cdot L_{5C}$ \\
 $21$st & 9.554 & $L_{5A} \cdot L_{6A} \cdot L_{5D}$ \\
  $22$nd & 9.554 & $L_{5C} \cdot L_{6A} \cdot L_{5B}$ \\
 $23$rd & 9.629 & $L_{5C} \cdot L_{6A} \cdot L_{5D}$ \\
\noalign{\smallskip}\hline
\end{tabular}
\hfill\null
\end{table}

\subsection{The symmetric case: locating, refining, and classifying, connections}
We now describe the homoclinic mining procedure  in the case of the triple Copenhagen problem. 
Assuming we have computed stable/unstable atlases denoted by $\mathcal{A}^s,\mathcal{A}^u$ respectively. Each atlas is of the form described in Section \ref{sec:atlas} i.e.~each atlas is a union of chart maps having the form, $\Gamma^{s,u}: D \to \rr^4$ with $\Gamma^{s,u}(D) \subset W^{s,u}(\mathbf{x_0})$. 

We begin with a lemma to motivate the choice to grow each atlas in the symmetric case and then do continuation as opposed to growing the atlas for non-symmetric cases. 

\begin{lemma}
	\label{lem:CRFBP_symmetry}
	Assume $f$ is the symmetric CRFBP vector field i.e.\ $m_1 = m_2 = m_3 = \frac{1}{3}$ and define two linear maps, $\varphi^{\pm}: \rr^4 \to \rr^4$ by $\varphi^{\pm}(x,\dot{x},y,\dot{y}) = \varphi^{\pm}(\mathbf{x}) = R^{\pm}\mathbf{x}$ where $R^{\pm}$ is the matrix given by
\[
R^{\pm} = 
\left(
\begin{array}{cccc}
\cos (\pm \theta) & 0 & - \sin(\pm \theta) & 0 \\
0 & \cos (\pm \theta) & 0 & - \sin(\pm \theta) \\
\sin(\pm \theta) &  0 & \cos(\pm \theta)  & 0 \\
0 & \sin(\pm \theta) &  0 & \cos(\pm \theta)  \\
\end{array}
\right)
\qquad \theta = \frac{2 \pi}{3},
\]
then $\varphi^{\pm}$ is a rotational conjugacy for $f$ and $\varphi^{\pm} \circ f(\mathbf{x}) = f \circ \varphi^{\pm}(\mathbf{x})$ for all $\mathbf{x} \in \rr^4$. In particular, if $\gamma$ parameterizes a homoclinic orbit for $\mathcal{L}_0$, then $\varphi^{\pm} \circ \gamma$ are parameterizations for two additional, distinct ``symmetric'' homoclinic orbits for $\mathcal{L}_0$. Moreover, if $\gamma$ parameterizes a homoclinic orbit for $\mathcal{L}_5$, then $\varphi^{+} \circ \gamma$ and $\varphi^- \circ \gamma$ parameterize symmetric homoclinics for $\mathcal{L}_4$ and $\mathcal{L}_6$. 
\end{lemma}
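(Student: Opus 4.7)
The plan rests on the geometric observation that when $m_1 = m_2 = m_3 = \tfrac{1}{3}$, the three primary positions form an equilateral triangle with centroid at the origin, so the planar rotation $\tilde{R}^{\pm}$ by $\pm 2\pi/3$ cyclically permutes $(x_1,y_1)$, $(x_2,y_2)$, $(x_3,y_3)$. I would verify this either by substituting $m_1=m_2=m_3=1/3$ into the explicit formulas of Section \ref{sec:RTBP} and computing polar coordinates directly, or more conceptually by noting that the center-of-mass constraint forces three equal masses in Lagrange's configuration onto a common circle about the origin with $120^\circ$ separation, with $(x_1,y_1)$ pinned to the negative $x$-axis by the chosen orientation.

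Next I would exploit this geometric symmetry to establish two equivariance identities. First, the effective potential $\Omega$ is invariant under $\tilde{R}^{\pm}$: the $\tfrac12(x^2+y^2)$ term is rotationally invariant, and the gravitational sum $\sum_j m_j/r_j(x,y)$ is invariant because $\tilde{R}^{\pm}$ permutes the distances $r_j$ and all $m_j$ coincide. Differentiating $\Omega(\tilde{R}^{\pm}(x,y)) = \Omega(x,y)$ and using orthogonality of $\tilde{R}^{\pm}$ yields the gradient equivariance $\nabla\Omega(\tilde{R}^{\pm}(x,y)) = \tilde{R}^{\pm}\nabla\Omega(x,y)$. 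Plugging this into the definition of $f$ and viewing $R^{\pm}$ as acting separately on the position pair $(x,y)$ and the velocity pair $(\dot x,\dot y)$ by the same planar rotation, the desired identity $R^{\pm} f(\mathbf{x}) = f(R^{\pm}\mathbf{x})$ reduces to a short row-by-row check: the first and third rows are immediate from $R^{\pm}$ commuting with itself on velocities, while the second and fourth rows use the gradient equivariance together with the algebraic fact that the Coriolis operator $(\dot x,\dot y)\mapsto(2\dot y,-2\dot x)$, being itself a planar $\pi/2$-rotation of velocity, commutes with $\tilde{R}^{\pm}$.

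Once the conjugacy $\varphi^{\pm}\circ f = f\circ\varphi^{\pm}$ is in hand, the consequences for homoclinics are immediate. If $\gamma$ solves $\dot{\mathbf{x}}=f(\mathbf{x})$ with $\lim_{t\to\pm\infty}\gamma(t)=p$, then $\varphi^{\pm}\circ\gamma$ also solves the ODE and limits to $\varphi^{\pm}(p)$. Since $\mathcal{L}_0$ lies at the origin (the center of mass, fixed by every rotation), $\varphi^{\pm}\circ\gamma$ is again homoclinic to $\mathcal{L}_0$. For $\mathcal{L}_5$, I would invoke the same rotational symmetry applied to the equilibrium equations: $\varphi^{\pm}$ permutes the equilibrium set, and by the labeling convention of Figure \ref{rotatingframe} we have $\varphi^{\pm}(\mathcal{L}_5)\in\{\mathcal{L}_4,\mathcal{L}_6\}$, with the choice of sign determining which, so $\varphi^{\pm}\circ\gamma$ gives a homoclinic for the corresponding saddle-focus. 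Distinctness of the three rotated orbits at $\mathcal{L}_0$ follows from the fact that $R^{\pm}$ has no nonzero fixed vector in the $xy$-plane: if $\varphi^{\pm}\circ\gamma$ and $\gamma$ traced the same curve, their velocities at any common point would differ by a nontrivial rotation, forcing the image of $\gamma$ into the fixed set $\{\mathcal{L}_0\}$, a contradiction.

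The main obstacle is essentially bookkeeping. The primary-position formulas in Section \ref{sec:RTBP} are written in a form that makes the $120^\circ$-rotational symmetry non-obvious, so some direct calculation is needed to see that they collapse into a rotationally symmetric configuration at $m_j = 1/3$. After that, the rest is a standard equivariance argument for an ODE on $\rr^4$.
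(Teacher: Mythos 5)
Your approach is correct but arrives at the conjugacy identity by a genuinely different route than the paper. The paper also begins by noting that the planar rotation $\eta$ by $\pm 2\pi/3$ cyclically permutes the equal-mass primaries, but then verifies $\varphi \circ f = f \circ \varphi$ by a direct component-by-component computation of the CRFBP vector field, manipulating the sums $\sum_j (x-x_j)/r_j^3$ via the identity $r_i \circ \eta = r_{\pi^{-1}(i)}$. You instead observe that $\Omega$ is \emph{scalar invariant} under the rotation (since both the centrifugal term and the gravitational sum are permuted into themselves with equal masses), differentiate the invariance $\Omega(\tilde R q)=\Omega(q)$ to obtain gradient equivariance $\nabla\Omega(\tilde R q)=\tilde R\,\nabla\Omega(q)$, and combine this with the observation that the Coriolis operator is itself a planar rotation (hence commutes with $\tilde R$). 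This is more structural and shorter than the paper's calculation; it also transparently generalizes to any rotationally equivariant potential, whereas the paper's component check is more elementary and self-contained. Both arguments handle the homoclinic consequences the same way (push forward trajectories, identify $\eta(\mathcal{L}_i)=\mathcal{L}_{\sigma(i)}$). One small point: your sketch of distinctness is a nice addition that the paper's appendix actually omits, but the intermediate claim that ``velocities at a common point would differ by a nontrivial rotation'' is not quite right — two trajectories passing through the same phase point have the \emph{same} velocity $f(\mathbf{x})$. The clean argument is: if $\varphi^\pm\circ\gamma$ and $\gamma$ coincide as orbits, then by uniqueness $\varphi^\pm\circ\gamma(t)=\gamma(t+\tau)$ for some $\tau$; iterating gives $\gamma(t+3\tau)=\gamma(t)$, so a nonconstant homoclinic forces $\tau=0$, whence $\gamma(t)$ lies in the fixed set of $R^\pm$, which is $\{0\}=\{\mathcal{L}_0\}$.
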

The proof of Lemma \ref{lem:CRFBP_symmetry}  is included in Appendix \ref{sec:CRFBP_symmetry}. The significance of this symmetry is the fact that global stable/unstable atlases for the triple Copenhagen problem can be separated into 3 distinct equivalence classes where for $\mathbf{x},\mathbf{y} \in W^*({\mathbf{x}_0})$, the equivalence relation $\mathbf{x} \sim \mathbf{y}$ is satisfied if and only if $\mathbf{x} \in \{\mathbf{y}, \varphi^+(\mathbf{y}), \varphi^-(\mathbf{y})\}$. Thus, each atlas is obtained by advection of only a single representative for each class. In other words, in the equal masses case, we only need to advect $\frac{1}{3}$ of each initial parameterization boundary to obtain the entire atlas. Specifically, we define
\[
D' = \{\mathbf{z} \in D : 0\leq \text{Arg}(z_1) < \theta, \ z_2 = \text{conj}(z_1) \}
\]
and we globalize only $\partial D'$ to obtain a partial atlas, $\mathcal{A}'$. We can then access the full global atlas by applying $\varphi^+,\varphi^-$ to each chart in $\mathcal{A}'$ and we set
\[
\mathcal{A} = \mathcal{A}' \cup \varphi^+ \left(\mathcal{A}'\right)  \cup \varphi^- \left(\mathcal{A}'\right).
\]
The advantage is a $9$-fold increase in computational efficiency for the atlas computation, 
and a $3$-fold improvement in efficiency for the atlas mining scheme. 
Applying the procedure for the triple Copenhagen problems results in the 
connecting orbits illustrated in Figure \ref{fig:connectionsTopDown}.
These results are further described and classified in the next section.

\begin{figure}[!t]
\centering
\includegraphics[width=6in]{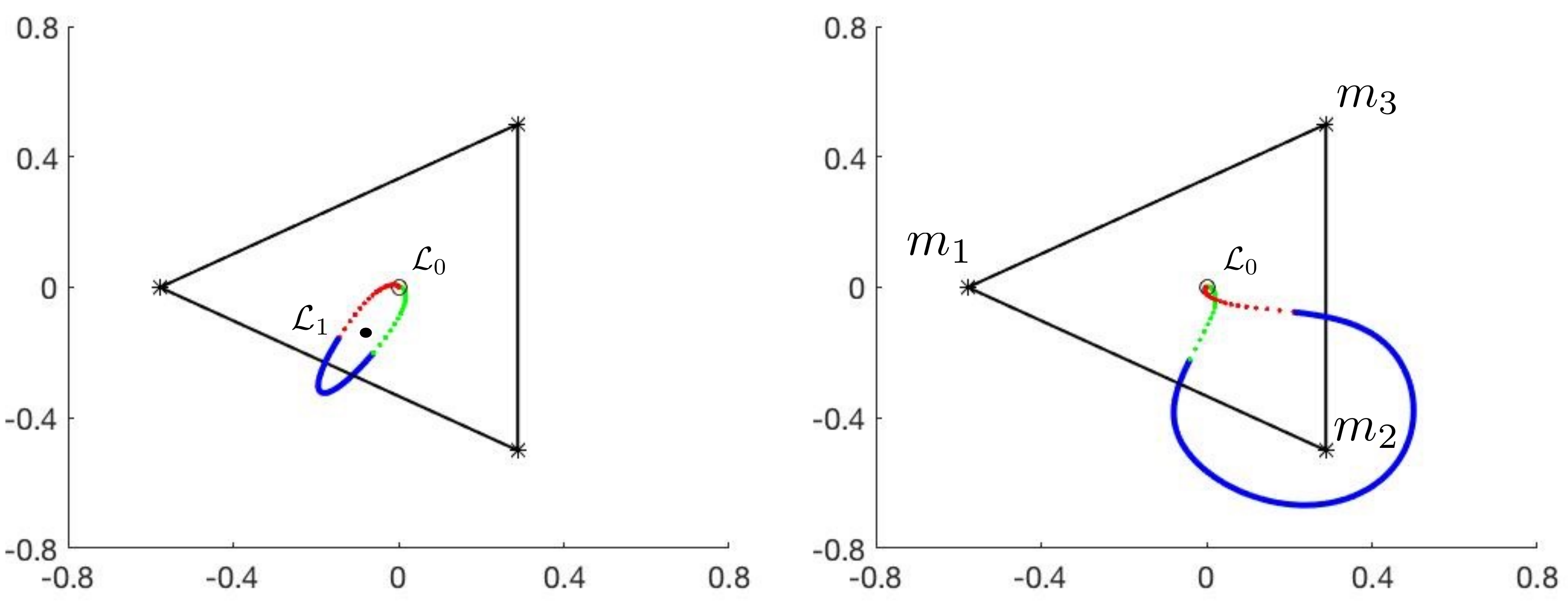}  
\caption{\textbf{Shortest connections at $\mathcal{L}_0$:}  
We distinguish the six shortest connecting orbits at $\mathcal{L}_0$ 
in the triple Copenhagen problem.  Two are shown above. 
The four others are rotations of these by $\pm 120$.
See also the left frames in Figure 20.
The blue portion of the orbit is the segment found by 
solving the boundary value problem.  
The dotted red/green
lines are the asymptotic portions on the parameterized local 
unstable/stable manifolds respectively.  The asymptotic portions 
are recovered using the conjugacy provided by the parameterization method.
We refer to the orbit on the left as $L_{0A}$
and the orbit on the right as $L_{0B}$.  The rotations by $\pm 120$ degrees 
we refer to as $L_{0A^\pm}$ and $L_{0B^\pm}$.
Observe that $L_{0A}$ has winding number $1$ with respect to 
the libration point $\mathcal{L}_1$, while $L_{0A^\pm}$ wind once 
around $\mathcal{L}_{2}$ and $\mathcal{L}_3$ respectively.
$L_{0B}$ on the other hand winds once around $m_2$ while 
$L_{0B^{\pm}}$ wind once around $m_3$ and $m_1$ respectively.
As the next five figures illustrate, the six basic connecting 
orbits organize all the connections we found at $\mathcal{L}_0$.
All references to color refer to the online version.
}
\label{fig:L0_letters}
\end{figure}

\begin{figure}[!h]
\centering
\includegraphics[width=6in]{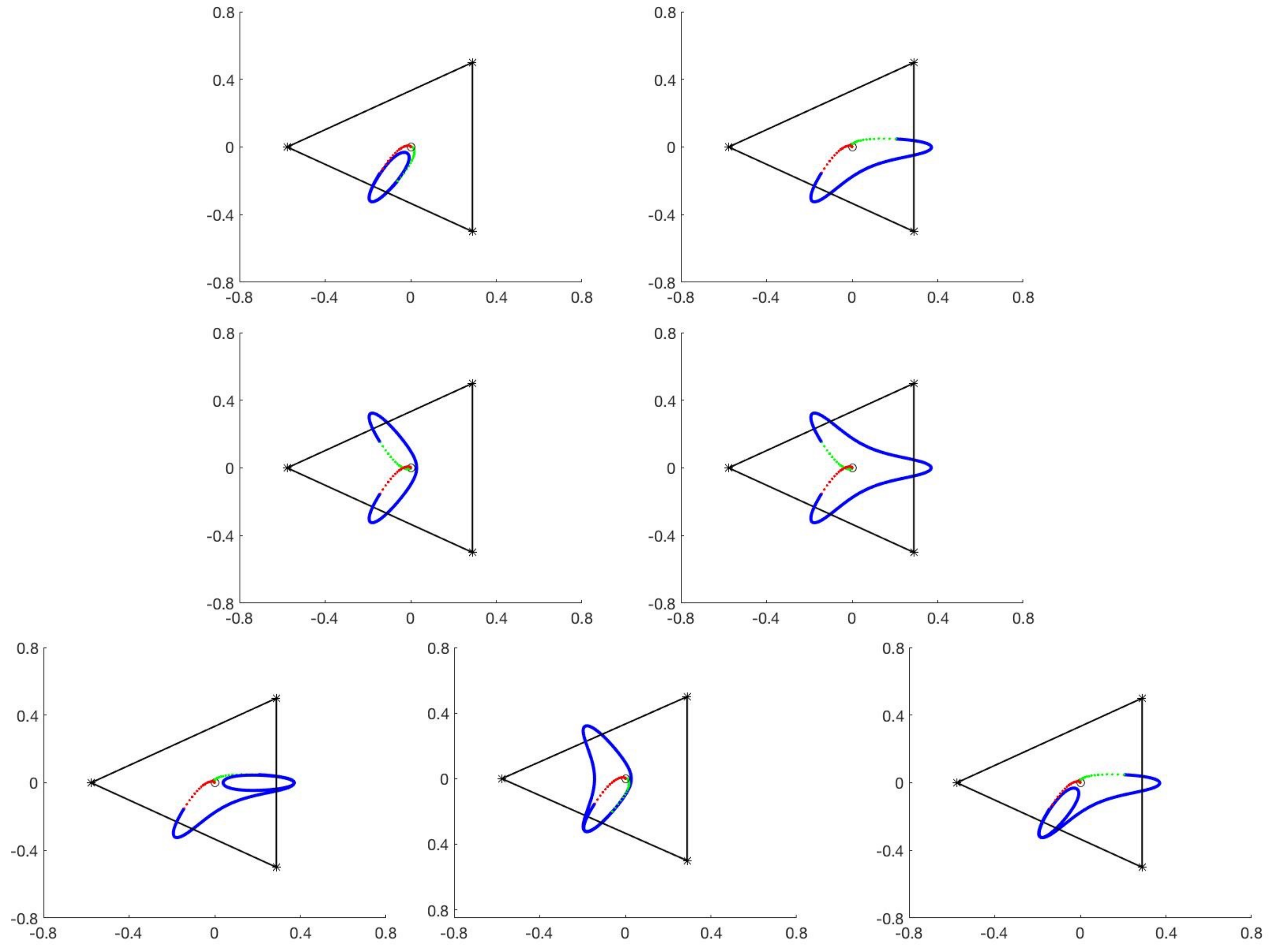}
\caption{\textbf{Homoclinic orbits at $\mathcal{L}_0$ associated with $L_{0A}$:} 
The figure illustrates seven homoclinic connecting orbits all built from 
$L_{0A}$ and its rotations $L_{0A^\pm}$ in the following sense: each of these
orbits has non-trivial winding
number with respect to one or more of the inner libration points, 
but none of these orbits wind around any of the primaries.  That is, these orbits
are ``built from pieces of $L_{0A}$ and its rotations, but have no contribution
from $L_{0B}$ and its rotations. 
(Blue, green and red solid/dotted lines have the same meaning described in 
Figure \ref{fig:L0_letters}). The orbits give the 
following symbol sequences: 
$L_{0A}^2$ -- 6th connection (top left), 
$L_{0A^+} \cdot L_{0A}$ --3rd (top right), 
$L_{0A^-} \cdot L_{0A}$ -- 9th (center left), 
$L_{0A^-} \cdot  L_{0A^+} \cdot  L_{0A}$--12th connection (center right), 
$L_{0A^+}^2 \cdot  L_{0A}$ -- 21st (bottom left), 
$L_{0A} \cdot L_{0A^-} \cdot  L_{0A}$ -- 36th (bottom center), 
 $L_{0A^+} \cdot  L_{0A}^2$ --22nd (bottom right).
In each case rotation of each by $\pm 120$ degrees gives a
distinct connection (not shown).
All references to color refer to the online version.
}
\label{fig:connectionsPage1}
\end{figure}

\subsubsection{Quantitative/qualitative classifications of the homoclinic orbit set
at $\mathcal{L}_{0,5}$}
Suppose $\mathbf{x}_0 \in \mathbb{R}^4$ is an equilibrium solution and
$W^{s,u}_{\mbox{\tiny loc}}(\mathbf{x}_0)$ a local stable/unstable 
manifold.  Let $\gamma$ be an orbit homoclinic to $\mathbf{x}_0$, and suppose 
that $T \in \mathbb{R}$ is the elapsed 
time from when $\gamma$ passes through the boundary of the local unstable 
manifold to when $\gamma$ passes through the boundary of the local stable 
manifold.  Observe that if 
$W^{s}_{\mbox{\tiny loc}}(\mathbf{x}_0) \cap W^{u}_{\mbox{\tiny loc}}(\mathbf{x}_0) = \{\mathbf{x}_0\}$  
and if the vector field is inflowing/outflowing 
on the boundaries of $W^{s,u}_{\mbox{\tiny loc}}(\mathbf{x}_0)$ respectively, then
$T > 0$ is well defined.

When the local parameterizations intersect only at $\mathbf{x}_0$, it makes
sense to talk about the ``shortest'' connection time,''
the ``second shortest'' connection time, and so on.  
This natural ordering on connection times provides a useful observable for classifying
homoclinic connections relative to fixed local stable/unstable manifolds.  
Generically, we expect a one-to-one correspondence between 
connecting orbits and connection times, 
though this expectation will fail in the presence of symmetries
as seen below.

In the CRFBP, when we ``mine'' the stable/unstable atlases for connecting orbits and order
them by connection time we see something interesting.
In each of the cases we studied, the shortest homoclinic orbits appear to 
organize the longer connections.  Informally speaking, we find that
a small number of short homoclinic orbits serve as a sort of  alphabet of ``letters'', and the longer connections can be roughly identified
as ``words'' in this alphabet.

For example, the first 42 homoclinic connecting orbits (up to symmetry) at $\mathcal{L}_0$ 
in the triple Copenhagen problem are classified in Table \ref{tab:L0connectionClassification}. 
These results are obtained by integrating initial local stable/unstable manifolds 
for $\pm 5$ time units subject to the speed constraint, $\dot{x}^2 + \dot{y}^2 \leq 4$.  Our method finds all of the 
connections satisfying these constraints.  The classification is in terms of the 
connection time, the order of appearance, and a geometric description in terms of 
words and letters. 

We give the names
$L_{0A}$ and $L_{0B}$ to the shortest two connections at $\mathcal{L}_0$.  
These orbits are illustrated in Figure \ref{fig:L0_letters},
and have connection times approximately 1.717 and 2.331 respectively.  
Rotating either of these by $\pm 120$ degrees gives another 
connecting orbit with exactly the same shape and connection time.
We refer to these rotations as $L_{0A^{\pm}}$ and $L_{0B^{\pm}}$.
These six shortest connections --$L_{0A}, L_{0B}$ and their 
symmetric counterparts -- organize the rest of the homoclinic behavior seen at 
$\mathcal{L}_0$ as we now describe.

We associate the third shortest connection  
with the word $L_{0A^+} \cdot L_{0A}$ because the 
orbit moves off the unstable manifold appearing to follow 
$L_{0A^+}$, passes near the equilibrium at $\mathcal{L}_0$,
and makes another excursion following $L_{0A}$ before returning 
to the stable manifold.  Similarly we associate to the $5$th longest connecting orbit 
the word $L_{0B}^2$, as this orbit moves off the 
unstable manifold and appears to follow $L_{0B}$, making two loops 
around the second primary before returning to the stable manifold.
Heuristically speaking, $L_{0A}$, $L_{0B}$ and their symmetric counterparts  
comprise a system of ``homoclinic channels'' or simple allowable 
motions and other homoclinic orbits seem to follow in their wake.

Table \ref{tab:L5connectionClassification} records analogous
information for the first 23 connections found at $\mathcal{L}_5$ in the triple 
Copenhagen problem.  In this case there are six basic letters $L_{5A}, L_{5B},
 L_{5D},  L_{5D},  L_{5E},  L_{5F}$.  Words are formed for these letters just as 
 discussed above.   Applying $\pm 120$ degree rotations
 produces connections with the same shapes and connection times at $\mathcal{L}_4$
 and $\mathcal{L}_6$ respectively.  
We stress that this description of the connecting orbits
in terms of words and letters, while intuitively appealing, is based on  
qualitative observations and is subordinate to 
the rigid quantitative classification of the orbits by connection time.

\begin{remark}
Several comments about the results reported in 
Tables \ref{tab:L0connectionClassification} and \ref{tab:L5connectionClassification} are in order.
\begin{itemize}
\item \textbf{Additional symmetries:} Some of the orbits, for example the 27th and 28th shortest orbits 
at $\mathcal{L}_0$ and the 21st and 22nd shortest orbits at $\mathcal{L}_5$ have reported the same connection times to 
three decimal places.  In fact the connection times agree to within numerical errors.  This is because the 
equal mass problem has reversible symmetries that we are not exploiting in our computations. 
Rather these serve as a check on the numerics.
\item \textbf{Connection time versus ordering:}
While the connection times reported in these tables depend on the 
choice of local stable/unstable manifold,   
it  should be remarked that, as long as the parameterization method is used to represent the local 
manifolds as discussed in Section \ref{sec:invariantManifolds}, the ordering of the connections 
does not change.  It is easy to check that the boundary of the parameterized manifolds are
inflowing/outflowing and that the manifolds intersect only at the equilibrium solution.  
Moreover, since the eigenvalues are complex conjugate, the local parameterizations
are unique up to the choice of a single eigenvector scale factor.  
By choosing the unit disk as the domain of the parameterization, the scaling the only 
free parameter in the problem.  Decreasing the scaling by a factor of $\tau > 0$ 
is equivalent to flowing the boundary by the same time $\tau$.
So: rescaling the eigenvectors changes all the times of flight by exactly the same amount, 
hence does not reorder them.  
\item \textbf{Qualitative classification:} the decomposition of the connecting orbits into words
is performed ``by eye'' in the present work.  That is, we simply inspect the connections and 
describe what we see.   We now sketch a method which could be used to formalize our 
qualitative description, and note that the idea is computationally feasible. 
Recall that if $\gamma$ is a 
simple closed rectifiable curve in the plane (with counter clockwise orientation), 
and $z_0 = x_0  + i y_0$ is a point not on $\gamma$, then the 
number of times that $\gamma$ winds around $z_0$ is counted by the integral
\[
\mbox{Ind}_\gamma(z_0) = \frac{1}{2 \pi i} \int_\gamma \frac{1}{z - z_0} \, dz.
\] 

So, observe for example that the curve $L_{0A}$ winds once around 
the libration point $\mathcal{L}_1$, while $L_{0A^+}$ and $L_{0A^-}$ 
each wind once, respectively around $\mathcal{L}_2$ and $\mathcal{L}_3$.  
Similarly the curve $L_{0B}$ winds once around the second primary, while 
$L_{0B^+}$ and $L_{0B^-}$ wind once each around the third and first 
primaries.  
Then for a homoclinic orbit $\gamma$ at $\mathcal{L}_0$ define the integer vector
$(\alpha_1, \alpha_2, \alpha_3, \beta_1, \beta_2, \beta_3) \in \mathbb{Z}^6$, where 
$\alpha_j = \mbox{Ind}_{\gamma}(\mathcal{L}_j)$ for and 
$\beta_j = \mbox{Ind}_{\gamma}(P_j)$ both for $j = 1,2,3$.  (Here $P_j$ are 
the coordinates of the $j$th primary). 
Then $\alpha_1$ counts the contribution of $L_{0A}$ to $\gamma$ while 
$\beta_1$ counts the contribution of $L_{0B}$ and so on. 
This description amounts to an Abelianization 
of the previous notion of words/letters, as the winding vector looses track of the order 
of the letters in the word.   (It is often the case that mechanical calculation of 
topological data is facilitated by passing to an Abeleanization).  
This notion is extended to the homoclinic orbits at $\mathcal{L}_{5}$ in a similar way, 
see Figure \ref{fig:connectionsL5_letters}.
\item \textbf{Blue skies:} the main theorem of Henrard in \cite{MR0365628}, already mentioned in the introduction,
gives that there is a family of periodic orbits accumulating to each of the homoclinic orbits 
found by our procedure.  In some cases we can venture a guess as which families of periodic orbits
accumulate to which homoclinic.  For example Figure \ref{fig:blueSky} illustrates
the orbit $L_{0A}$ and $L_{0A\pm}$ along with the planar Lyapunov families attached to 
the inner libration points $\mathcal{L}_{1,2,3}$.  The results suggest that the planar
families may accumulate at to these homoclinics. 
Similarly, comparing the orbits $L_{5E}$ and $L_{5F}$ in the bottom 
left and right frames of Figure \ref{fig:connectionsL5_letters} with the 
planar Lyapunov families at $\mathcal{L}_{7,9}$ illustrated in the left 
frame of Figure \ref{fig:localManifolds} suggests that these may be 
the families of periodic orbits attached to these homoclinics.
The orbits $L_{0B}$, and $L_{0B^\pm}$, as well as the orbits $L_{5A}$
and $L_{5B}$ must be the limits of families of periodic orbits 
winding around the primary bodies. 
Making a systematic study of the periodic families associated with 
the homoclinic orbits discussed here would make a nice topic for 
a future study.
\end{itemize}
\end{remark}

\begin{figure}[!t]
\centering
\includegraphics[width=6in]{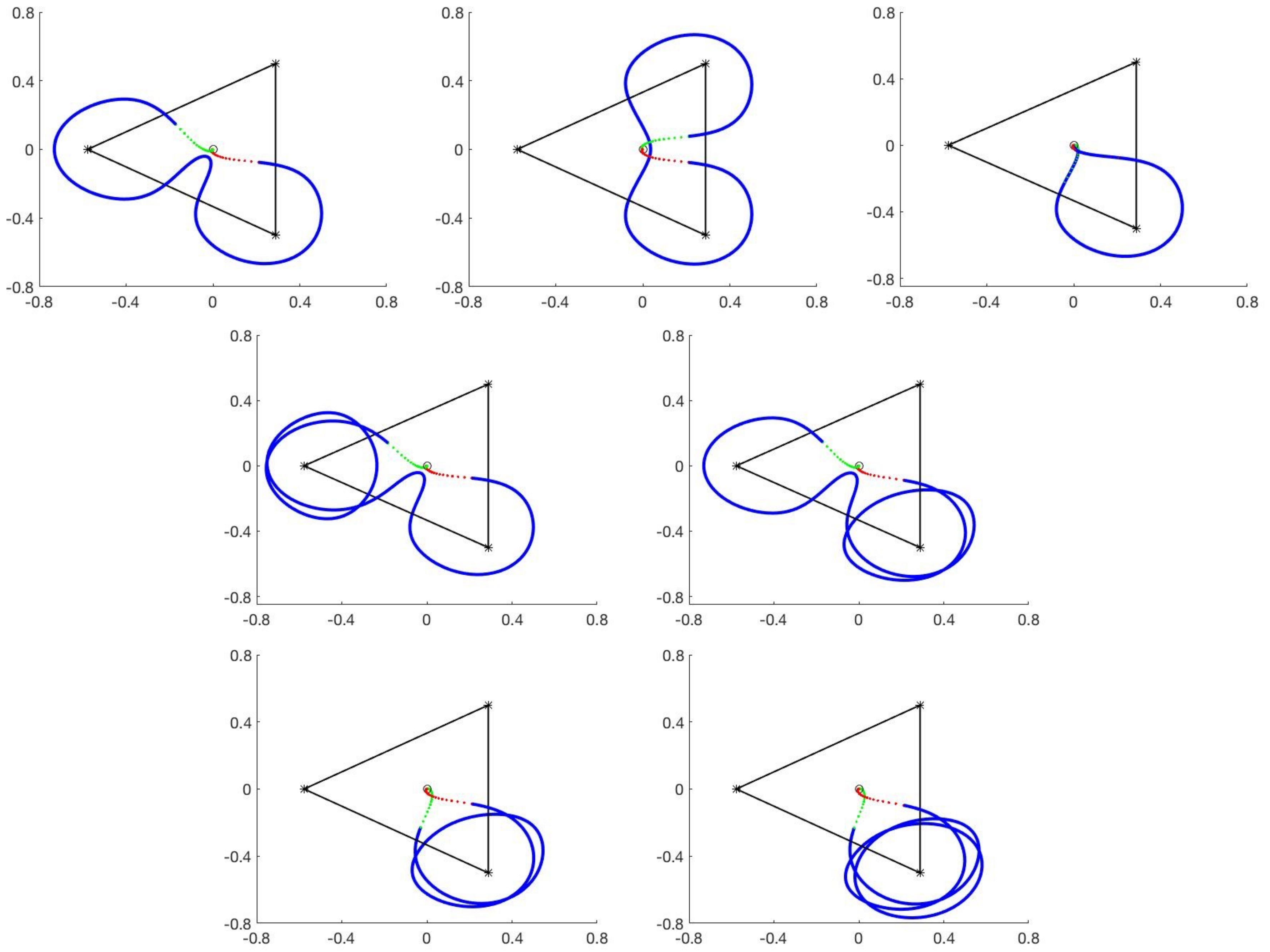}
\caption{\textbf{Homoclinic orbits at $\mathcal{L}_0$ associated with $L_{0B}$}: 
The figure illustrates seven connections built from $L_{0B}$ and its rotations.
So, each of these orbits has non-zero winding about one or more primary 
-- that is $L_{0B}$ behavior -- and none of them have any winding about 
the inner libration points -- that is no $L_{0A}$ behavior. 
(Blue, green and red solid/dotted lines have the same meaning described in 
Figure \ref{fig:L0_letters}). The orbits give the 
following symbol sequences: 
$L_{0B^-} \cdot L_{0B} $ -- 11th connection (top left), 
$L_{0B^+} \cdot L_{0B}$ -- 19th (top center),
$L_{0B}^2$ --29th (top right), 
$L_{0B^-}^2 \cdot L_{0B}$ -- 35th (center left),
$L_{0B^-} \cdot L_{0B}^2$ -- 34th connection (center right),
$L_{0B}^2$ --5th connection (bottom left),
$L_{0B}^3$ --14th (bottom right).
In each case rotation of each by $\pm 120$ degrees gives a
distinct connection (not shown).
Observe that the symbol sequences need not be unique.
For example the top right and bottom left orbits
have the same winding, but different times of flight.
(See also Figure \ref{fig:connectionsDoubling}
and Table \ref{tab:L0connectionClassification}).
All references to color refer to the online version.
}
\label{fig:connectionsPage2}
\end{figure}

\begin{figure}[!t]
\centering
\includegraphics[width=6in]{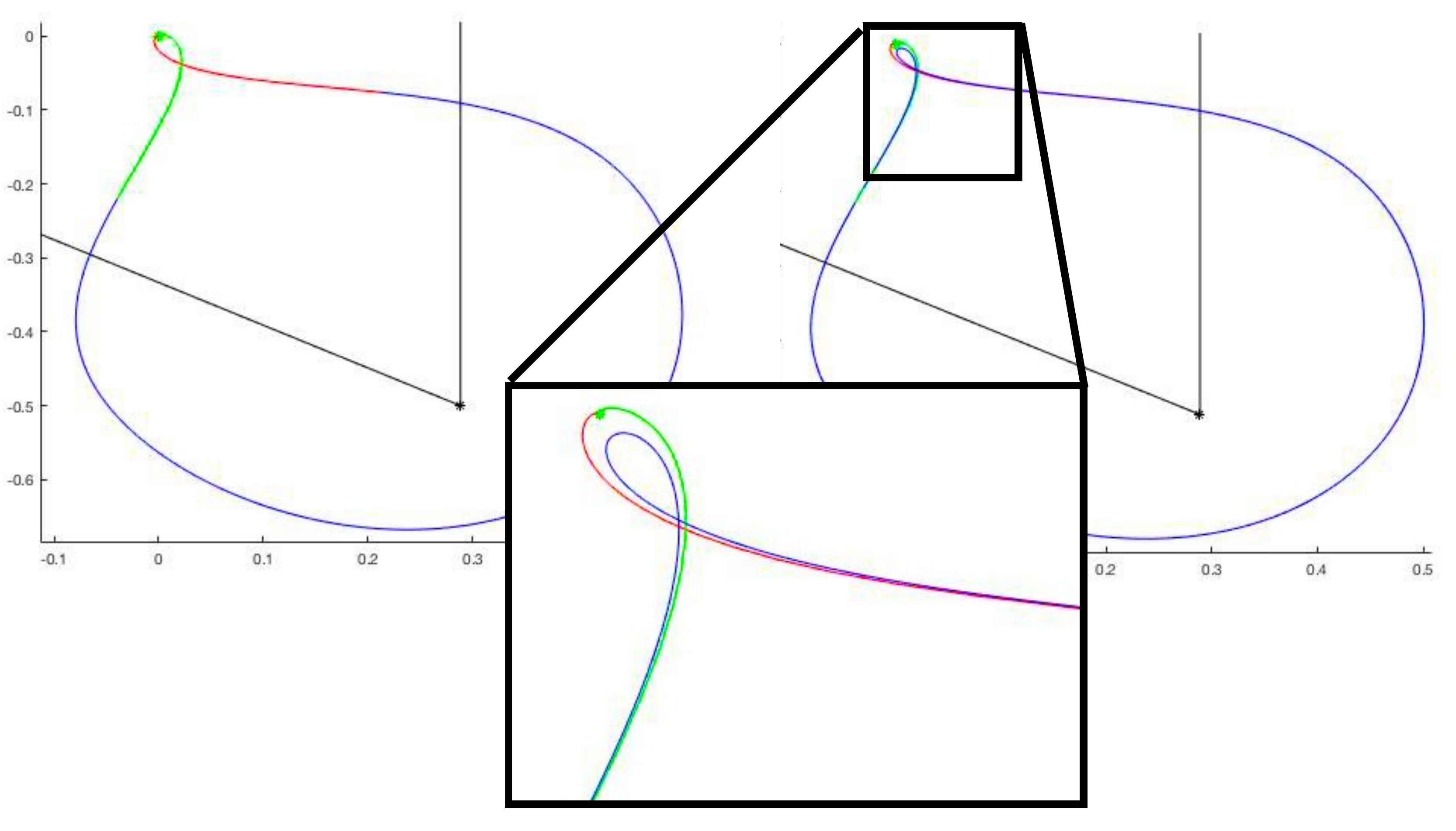}
\caption{\textbf{A close return to $\mathcal{L}_0$:}
While the top right frame in Figure \ref{fig:connectionsPage2}
-- illustrating the $L_{0B}^2$ orbit --
looks like a repeat of $L_{0B}$, further examination 
reveals that the orbits are distinct.  
The $L_{0B}$ and $L_{0B}^2$ orbits
are here shown side-by-side.
The longer $L_{0B}^2$ orbit on the right
follows  $L_{0B}$, makes are ``flyby'' of the libration point, 
then a second excursion, finally returning to the stable
manifold.  A close up of a neighborhood of $\mathcal{L}_{0}$ -- shown in 
the inlay -- illustrates the flyby.  These orbits illustrate the fact that 
two very similar looking orbits can have very different connection times.
Indeed, very simple looking orbits can have long connection times if they 
spend a long time in the neighborhood of a libration point where dynamics move
slow.
}\label{fig:connectionsDoubling}
\end{figure}

\begin{figure}[!t]
\centering
\includegraphics[width=6in]{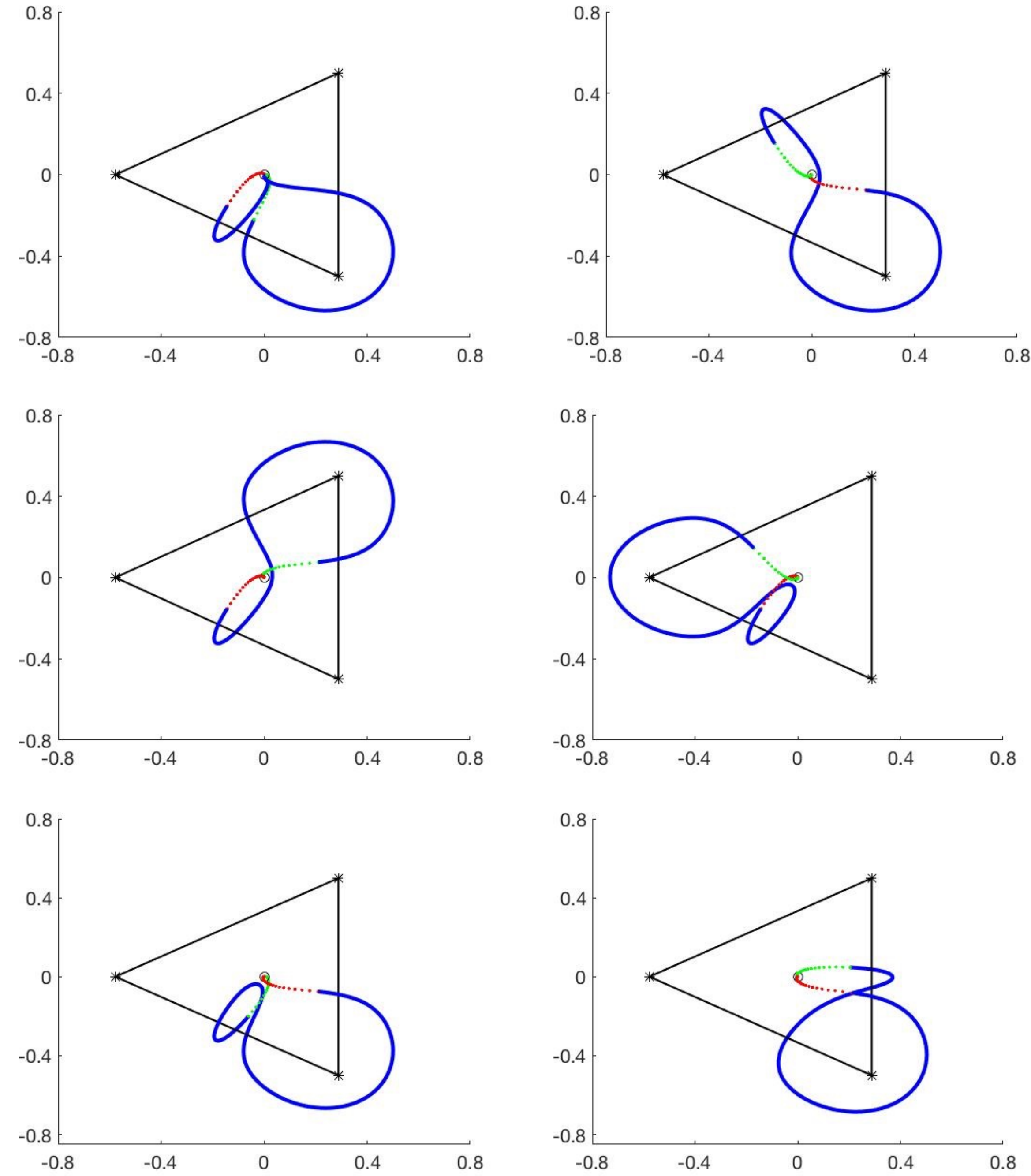}
\caption{\textbf{Homoclinic orbits at $\mathcal{L}_0$ with $AB$ words: } six connections
built from exactly one occurrence of $L_{0A}$ (or a symmetric counterpart) and 
one occurrence of $L_{0B}$ (or one of its symmetric counter parts).
So, each of these orbits has winding number one about exactly one of the 
primaries -- giving one instance of $L_{0B}$ behavior -- and winding number one 
one about exactly one of the inner libration points -- giving one instance of $L_{0A}$ 
behavior.  We refer to these as two letter words.
(Blue, green and red solid/dotted lines have the same meaning described in 
Figure \ref{fig:L0_letters}). We see the 
following symbol sequences: 
$L_{0B} \cdot L_{0A}$ --20th connection (top left), 
$L_{0A^-} \cdot L_{0B}$ -- 18th (top right), 
$L_{0B^+} \cdot L_{0A}$  -- 17th (center left),
$L_{0B^-} \cdot L_{0A}$ -- 8th (center right),
$L_{0A} \cdot L_{0B} $ -- 7th  (bottom left),
$L_{0A^+} \cdot L_{0B}$ -- 4th  (bottom right).
In each case rotation of each by $\pm 120$ degrees gives a
distinct connection (not shown).
All references to color refer to the online version.
}\label{fig:connectionsPage3}
\end{figure}

\begin{figure}[!t]
\centering
\includegraphics[width=6in]{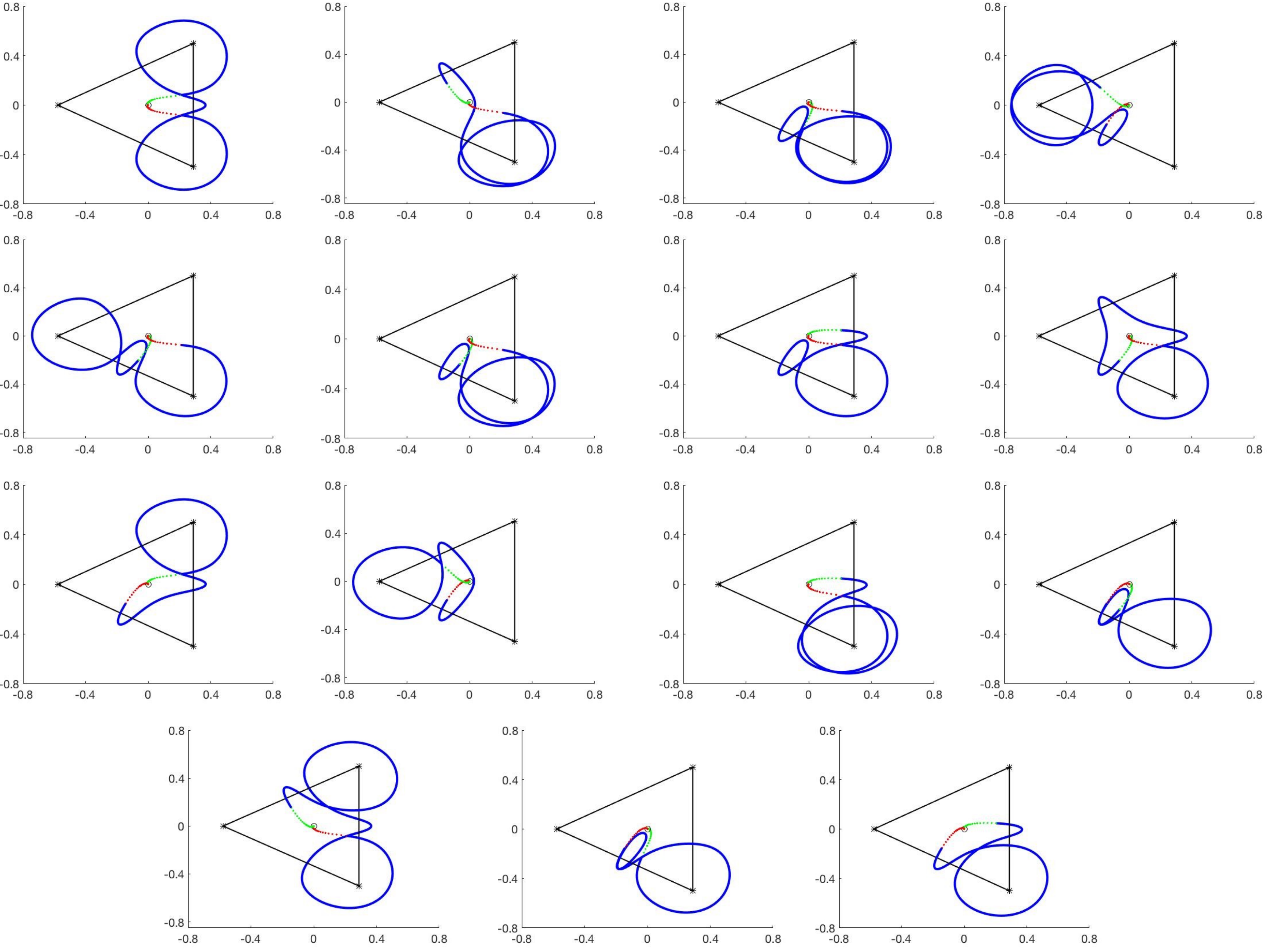}
\caption{\textbf{Fifteen connections at $\mathcal{L}_0$
whose words have three or more symbols:}
orbits with multiple winding about one or more primaries as well as
one or more inner libration point, and hence multiple $L_{0A}$ and $L_{0B}$
behaviors.  We refer to these as three or four letter 
words.  (Blue, green and red solid/dotted lines have the same meaning described in 
Figure \ref{fig:L0_letters}). We see the 
following symbol sequences: 
$L_{0B^+} \cdot L_{0A^+} \cdot L_{0B}$ --16th connection (first row, first column),
$L_{0A^-} \cdot L_{0B}^2$ -- 37th (first row, second column),
$L_{0B} \cdot L_{0A} \cdot L_{0B}$-- 33rd (first row, third column),
$L_{0B^-}^2 \cdot L_{0A}$ -- 26th (first row, fourth column),
$L_{0A} \cdot L_{0B^-} \cdot L_{0B}$ --32nd (second row, first column),
$L_{0A} \cdot L_{0B}^2$ --25th (second row, second column),
$L_{0A^+} \cdot L_{0A} \cdot L_{0B}$ --30th (second row, third column),
$L_{0A} \cdot L_{0A^-} \cdot L_{0A^+} \cdot L_{0B}$ -- 40th (second row, fourth column),
$L_{0B^+} \cdot L_{0A^+} \cdot L_{0A}$ -- 15th (third row, first column),
$L_{0B^-} \cdot L_{0A^-} \cdot L_{0A}$ -- 38th (third row, second column),
$L_{0A^+} \cdot L_{0B}^2$ --13th (third row, third column),
$L_{0A} \cdot L_{0B} \cdot L_{0A} \cdot$ --23rd (third row, fourth column),
$L_{0A^-} \cdot L_{0B}^+  \cdot L_{0A}^+ \cdot L_{0B}$ --39th (fourth row, first column),
$L_{0B} \cdot L_{0A}^2$ -- 24th (fourth row, second column),
$L_{0A}^+ \cdot L_{0B} \cdot L_{0A}$-- 10th (fourth row, third column),
In each case rotation of each by $\pm 120$ degrees gives a
distinct connection (not shown).
}\label{fig:connectionsPage4}
\end{figure}

\begin{figure}[!t]
\centering
\includegraphics[width=6in]{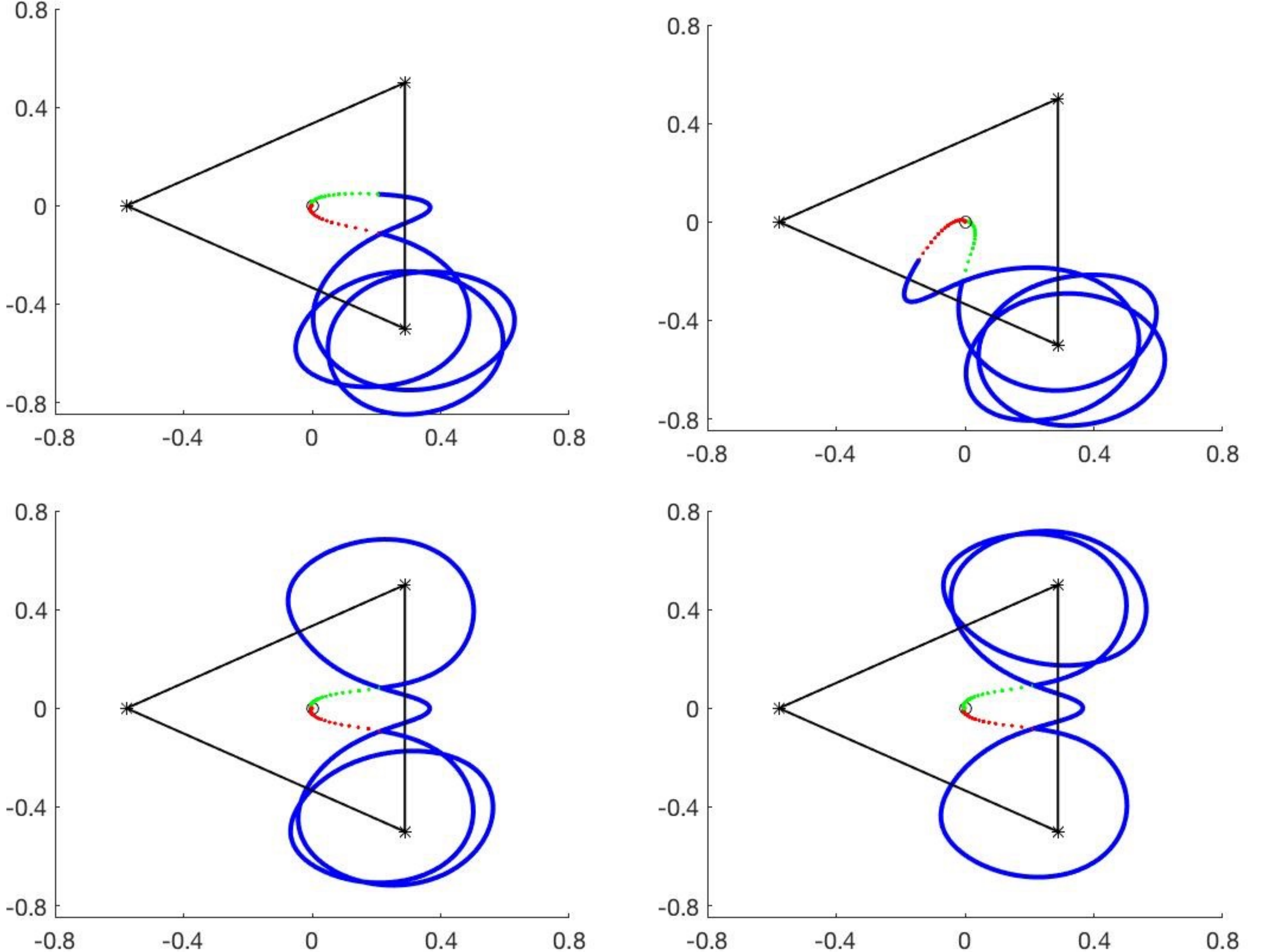}
\caption{\textbf{Longest words the $\mathcal{L}_0$ atlas data:} 
the four most complicated words
in our $\mathcal{L}_0$ search --
orbits whose words contain four letters.
Each has triple winding about the primaries -- either 
winding three about one primary mass or winding two about one followed by 
winding one about another.  Each also has winding one about an inner libration 
point.  It is interesting to note that these are not necessarily the 
longest orbits, in the sense of connection time.  While the bottom left and 
right are the two longest orbits, the top left and right are only 27th and 28th 
respectively.  (Blue, green and red solid/dotted lines have the same meaning described in 
Figure \ref{fig:L0_letters}). We see the 
following symbol sequences: 
$L_{0A}^+ \cdot L_{0B}^3$ --28th connection (top left),
$L_{0B}^3 \cdot L_{0A}$ -- 27th (top right),
$L_{0B}^+ \cdot L_{0A^+} \cdot L_{0B}^2$ -- 41st (bottom left),
$(L_{0B}^+)^2 \cdot L_{0A} \cdot L_{0B}$ 42nd (bottom right).
In each case rotation of each by $\pm 120$ degrees gives a
distinct connection (not shown).
}\label{fig:connectionsPage5}
\end{figure}

\begin{figure}[!h]
\centering
\includegraphics[width=6in]{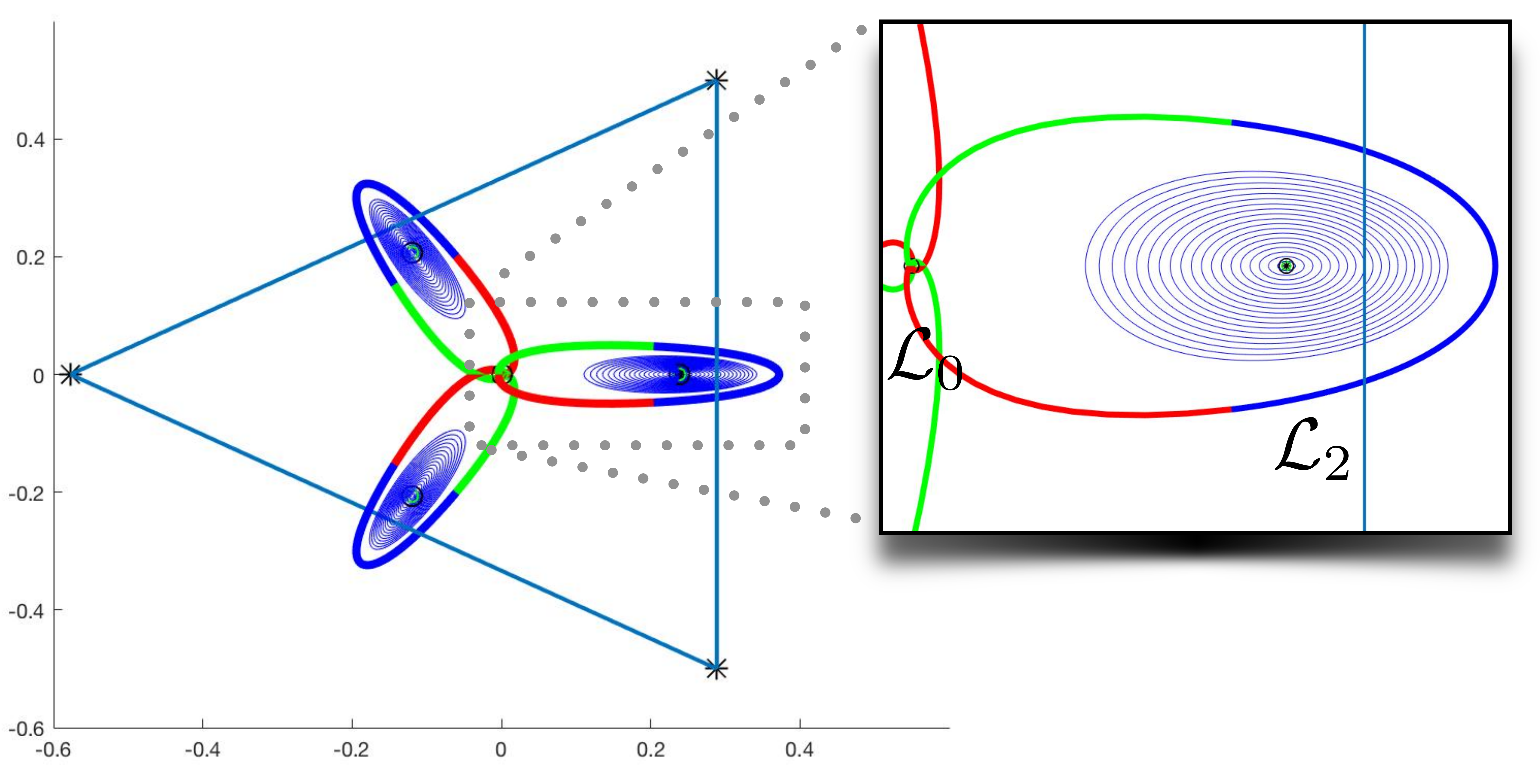}
\caption{\textbf{Blue sky catastrophes at $\mathcal{L}_0$:}
the thick blue lines correspond to the portion of 
the homoclinic orbit computed using the BVP approach. 
The green and red lines correspond to the 
asymptotic portion of the orbit on the parameterized 
local unstable and stable manifolds respectively.
Left: the three shortest homoclinic orbits at $\mathcal{L}_0$.
Note that the orbits are rotations by $120$ degrees of one another.
The figure also includes the planar Lyapunov families of 
orbits about the inner libration points $\mathcal{L}_{1,2,3}$.
These periodic orbits were computed using a center 
manifold reduction, and we have not applied numerical 
continuation to the boundary.  Nevertheless the images
suggest that the planar Lyapunov families may accumulate 
that the $L_{0A}$ homoclinic orbits.
Right: close up on a neighborhood of $\mathcal{L}_0$ and 
$\mathcal{L}_2$.  The homoclinic orbit $L_{0A^+}$ clearly 
has winding number one about $\mathcal{L}_2$.
All reference to color refers to the online version.
}\label{fig:blueSky}
\end{figure}

\begin{figure}[!t]
\centering
\includegraphics[width=5.5in]{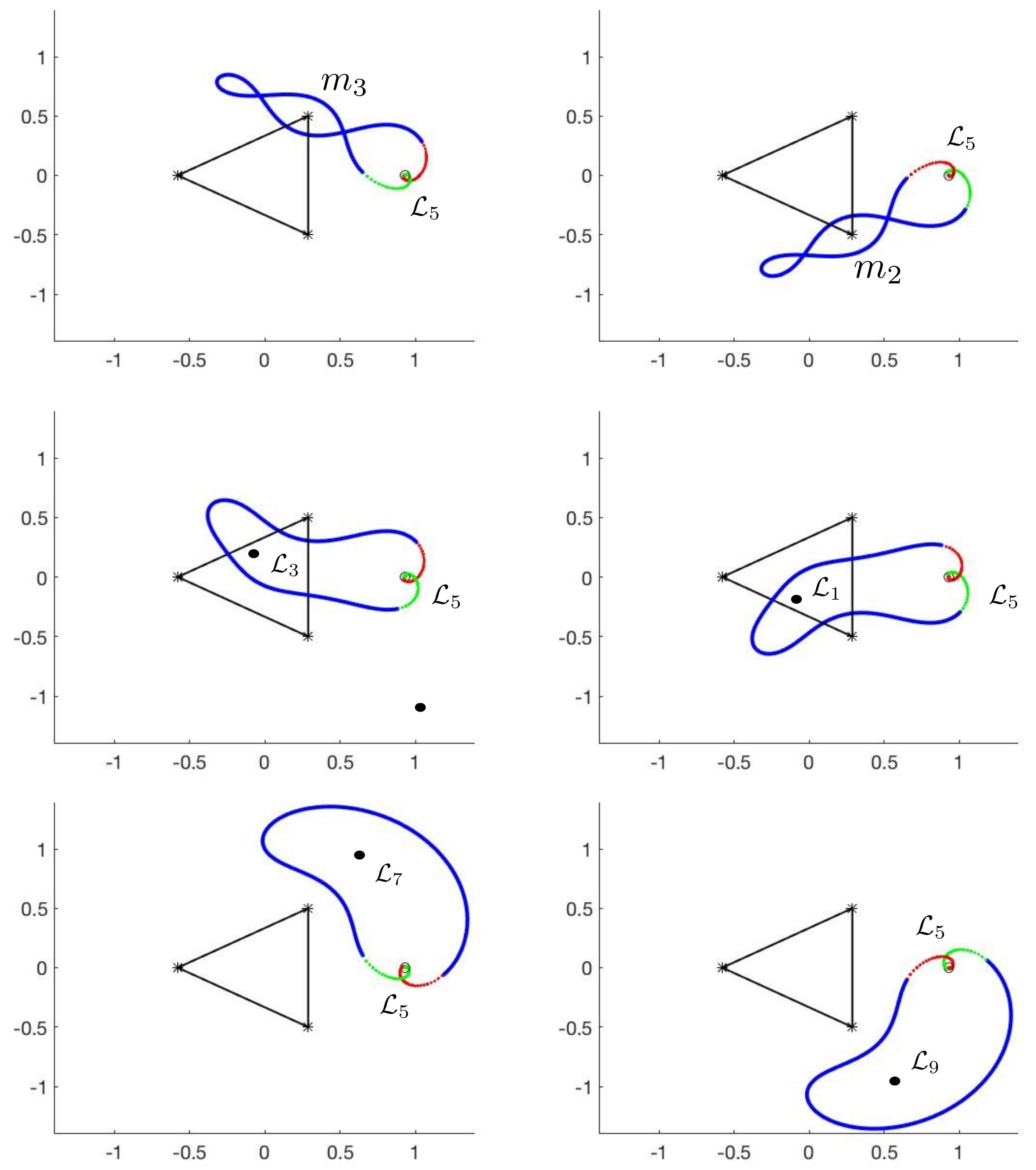}
\caption{
\textbf{Fundamental connections at $\mathcal{L}_5$:}  The six 
shortest connecting orbits at $\mathcal{L}_5$ for the triple Copenhagen
problem.  Shortest connections at $L_{4,6}$ are obtained by 
rotation by $\pm 120$.  These six orbits and their rotations 
organize all the connections we find at $\mathcal{L}_5$,
as illustrated in the next two figures.
(Blue, green and red solid/dotted lines have the same meaning described in 
Figure \ref{fig:L0_letters})
We refer to these orbits as 
$L_{5 A}$, $L_{5B}$ (top left and right),
$L_{5 C}$, $L_{5D}$ (middle left and right),
and $L_{5E}$, $L_{5F}$ (bottom left and right).
Moreover, by rotating the pictures by $\pm 120$ degrees we
obtain orbits which we refer to as $L_{4 ABCDEF}$ and 
$L_{6ABCDEF}$.  Observe that $L_{5A}$ and $L_{5B}$
have winding number 1 with respect to the second and third
primaries.  Moreover they are the only orbits of the six basic words
which wind around any primary.  Similarly $L_{5C}$ and $L_{5D}$
wind once around the inner libration points
$\mathcal{L}_3$ and $\mathcal{L}_1$ once respectively.  
They are the only basic words at $\mathcal{L}_5$ with this property.
Finally, $L_{5E}$ and $L_{5F}$ are distinguished by the fact that they have
winding number one with respect to the outer libration points $\mathcal{L}_7$
and $\mathcal{L}_{9}$ respectively.  
All references to color refer to the online version.  
}\label{fig:connectionsL5_letters}
\end{figure}

\begin{figure}[!t]
\centering
\includegraphics[width=6in]{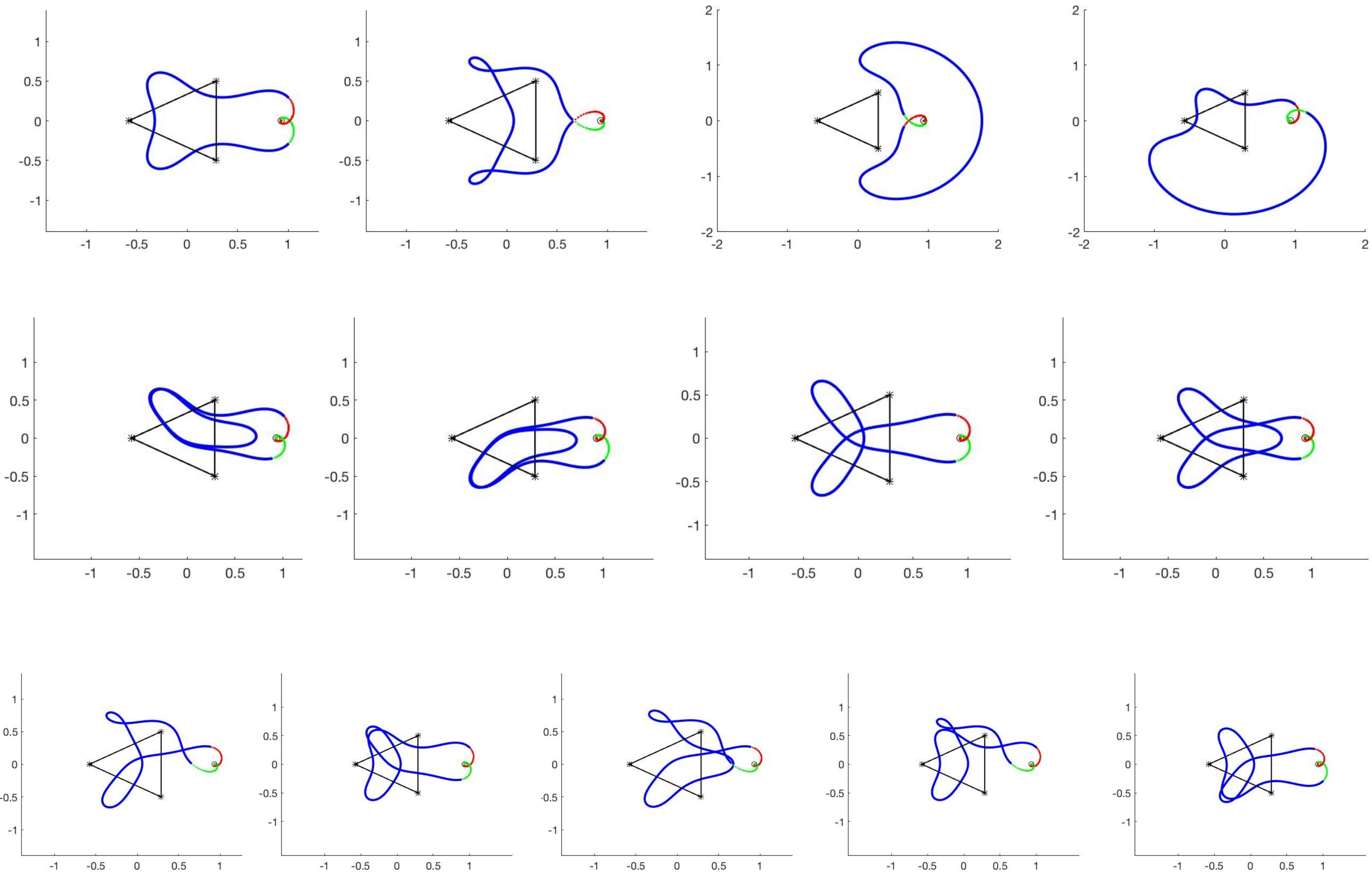}
\caption{\textbf{Homoclinic orbits with two or three letter words at $\mathcal{L}_5$}: 
each of the orbits shadows two or three of the six basic shapes shown in 
Figure \ref{fig:connectionsL5_letters}
(or one of their rotations).  Rotation of each by $\pm 120$ degrees give a 
connection at $L_{4,6}$. 
The symbol sequences for these orbits are
$L_{5D} \cdot L_{5C} $ -- 7th connection (first row, first column),
$L_{5A} \cdot L_{5B}$ -- 8th (first row, second column), 
$L_{5E} \cdot L_{5F} $ -- 11th (first row, third column),
$L_{5F} \cdot L_{4E} \cdot L_{5C} $ --14th (first row, fourth column),
$L_{5C}^2 $ -- 17th (second row, first column),
$L_{5D}^2$ -- 18th (second row, second column), 
$L_{5C}  \cdot L_{5D} $ -- 10th  (second row, third column),
$L_{5C} \cdot L_{5D} $ -- 13th (second row, fourth column),
$L_{5A} \cdot L_{5D} $ --9th (third row, first column),
$L_{5C} \cdot L_{5D} \cdot L_{5C} $ --20th (third row, second column), 
$L_{5A} \cdot L_{5D}$ -- 12th (third row, third column),
$L_{5A} \cdot L_{5D} \cdot L_{5C}$ -- 15th (third row, fourth column),
$L_{5D} \cdot L_{5C} \cdot L_{5D}$-- 19th (third row, fifth column).
(Blue, green and red solid/dotted lines have the same meaning described in 
Figure \ref{fig:L0_letters}).
Again we see that a symbol sequence can appear more than once.
For example 
the frames in the second row third and fourth column
both have $L_{5C}  \cdot L_{5D}$.  However, the orbits are
distinguished by their connection time (See Table \ref{tab:L5connectionClassification}).
All references to color refer to the online version.
}
\label{fig:connectionsL5_words}
\end{figure}

\begin{figure}[!t]
\centering
\includegraphics[width=6in]{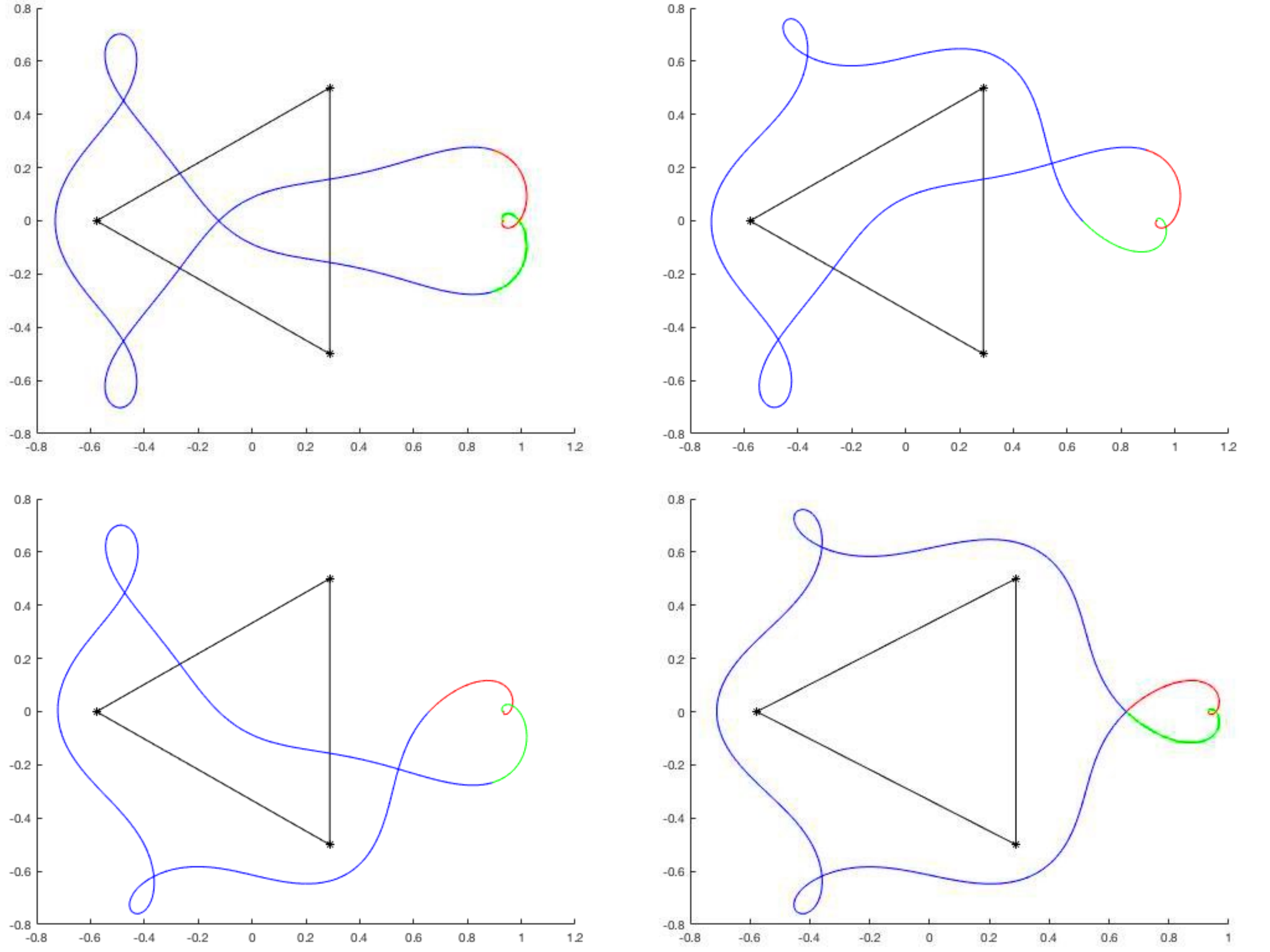}
\caption{\textbf{Orbits at $\mathcal{L}_5$ which wind around the first 
primary}: The orbit on the top left has
nontrivial winding about the largest primary and no winding about the 
smaller primaries.  The orbits in the top right and bottom left frames
wind around the largest primary and one or the other smaller primaries.
Finally the orbit in the bottom right frame winds around all three primaries.
The symbol sequences for these orbits are 
$L_{5C} \cdot L_{6A} \cdot L_{5D}$ --23rd and longest connection (top left),
$L_{5A} \cdot L_{6A} \cdot L_{5D}$ -- 21st (top right),
$L_{5C} \cdot L_{6A} \cdot L_{5B}$ -- 22nd (bottom right)
$L_{5A} \cdot L_{6A} \cdot L_{5B}$ -- 16th connection (bottom right), 
(Blue, green and red solid/dotted lines have the same meaning described in 
Figure \ref{fig:L0_letters}).  Note that while orbit $L_{5A} \cdot L_{6A} \cdot L_{5B}$
is one of the most geometrically complicated in our search, it has only the 
$16$th longest connection time. 
All references to color refer to the online version.
}\label{fig:connectionsL5_long}
\end{figure}

\begin{remark}[Some related work on asymptotic orbits] \label{rem:similarOrbits}
There are interesting similarities between some of the 
orbits discussed above, and some asymptotic orbits already 
discovered in  \cite{papadakisPO_likeUs}.  
The interested reader might for example 
compare the homoclinic orbit on the bottom right frame of our Figure
\ref{fig:connectionsL5_long} with the heteroclinic termination orbit 
illustrated in Figure 5 of \cite{papadakisPO_likeUs}. 
(To make such a comparison one has to ``flip''  Figure 
5 of \cite{papadakisPO_likeUs} $180$
degrees about the $y$-axis as the two papers use different 
normalizations of the four body problem.  Also, their $L_3$ is our 
$\mathcal{L}_5$).
In that study
the heteroclinic is discovered by numerical continuation of the
author's $f_{10}$ family of periodic orbits: 
a family of orbits with winding number one about all three of the 
primary masses.  We note that our homoclinic of Figure
\ref{fig:connectionsL5_long} is similar, but that the 
$\pm 120$ degree rotational symmetry broken.  We conjecture that there are
three families of periodic orbits bifurcating from the $f_{10}$ family after a 
symmetry breaking, and that these families 
terminate on the homoclinic of Figure \ref{fig:connectionsL5_long} (bottom 
right) and its rotation by $\pm 120$ degree counterparts.

Similarly, the heteroclinic orbit illustrated in Figure 4 of
\cite{papadakisPO_likeUs} -- which is the termination of the 
author's $f_5$ family -- is related the pair of homoclinic orbits $L_{E,F}$ 
illustrated in our Figure \ref{fig:connectionsL5_letters}.
To see this, imagine an orbit obtained by combining our $L_{5F}$
with the orbit $L_{4E}$, that is our $L_{5E}$ rotated by $-120$ degrees
so that it is based at $\mathcal{L}_4$.  The resulting union of curves 
has the same shape as the heteroclinic illustrated in 
Figure 4 of \cite{papadakisPO_likeUs}.
This  suggests that the families of periodic orbits which terminate at 
our $L_{E,F}$ could emerge from the planar Lyapunov families 
after symmetry breaking.

In general we note that the homoclinic orbits tend to have less symmetry than the 
heteroclinic, so that studying the periodic orbits terminating at the homoclinics 
is a good way to obtain asymmetric periodic orbits -- even in the symmetric 
versions of the problem.  We also note that changing the mass parameters will 
tend to destroy heteroclinic connections, as the libration points will move into
distinct energy levels.  Homoclinic orbits on the other hand
persist under generic Hamiltonian perturbations of the vector field.
In particular they persist after a small change in the mass ratios,
facilitating numerical continuation as discussed below.    
\end{remark}

\subsection{Numerical continuation of ensembles of connections}
The fact that the homoclinic connecting orbits are formulated as
solutions of boundary value problems makes parameter continuation 
natural.  We give only an outline of our continuation algorithm, as 
numerical continuation of homoclinic orbits for Hamiltonian systems
is described in great detail in the literature.  References are discussed in 
the introduction.  

Begin with an ensemble of connecting orbits 
for a libration point $\mathcal{L}$ at the mass parameters $m_1, m_2, m_3$
(initially we have $m_1 = m_2 = m_3 = 1/3$).  
\begin{itemize}
\item We choose a new parameter set $\bar m_1  = m_1 + \delta_1$, 
$\bar m_3 = m_3 + \delta_3$.  Then we compute $\bar m_2 = 1 - \bar m_1 - \bar m_3$, 
and apply a first order predictor corrector to find the libration point at the
 new parameter values.  We numerically compute the 
 eigenvalues and eigenvectors of the new libration 
 point, and if it remains a saddle-focus (i.e. if there has been no bifurcation) 
 we proceed.  
\item We recompute the local invariant manifolds at the new
parameter set.  A good strategy is to compute the coefficients to order $N_0$
by recursively solving the homological equations.  Initially we take the eigenvector scaling 
from the previous step and rescale if needed.  
For the higher order coefficients we use the coefficients from the 
previous step.  This gives as an initial guess for the Newton or 
 pseudo-Newton method which usually converges very fast.  
 \item The new local parameterizations provide the boundary 
 conditions for the multiple shooting scheme for the homoclinic orbits.
We take the connecting orbits from the previous step as the initial
guesses for the Newton method at the current mass parameters.
If necessary we can apply a first order predictor corrector, but this is often 
unnecessary, due to the fact that the boundary value problem formulated
with the high order parameterizations of the local manifold is very well 
conditioned.   Note that in a given continuation step, the same local 
parameterizations serve 
as the boundary conditions for the entire ensemble of connecting orbits.
This justifies the cost of computing high order representations of the manifolds.  
\item Once we have applied Newton to all the connections in the ensemble
we are ready to take a new step.  If Newton fails to converge for any of the 
connecting orbits we have to decide if we throw the orbit away, or if we recompute
with smaller $\delta_1, \delta_2, \delta_3$. 
\end{itemize}

We also remark that the atlas is not recomputed at the new mass parameter set.
That is, we continue only the connecting orbits -- the intersections of the 
stable unstable manifolds -- not the manifolds themselves.  Continuation of 
ensembles of connections is much cheaper than recomputing the atlas
each time we change parameters.  

Results of several numerical continuations are illustrated in 
Figures \ref{fig:continuationAtL0} and 
\ref{fig:continuationAtL5}.  As we change the masses we break the 
rotational symmetry of the Triple Copenhagen problem and the symmetric
counterparts resolve into distinct connection, no longer obtainable by rotations
of a single representative.
During the numerical continuation we sometimes encounter bifurcations of 
the connecting orbits themselves, which involve no bifurcation of the underlying 
equilibrium.  Figure \ref{fig:homoclinicDoubling} illustrates a common scenario
where a family of homoclinic orbits undergoes a doubling bifurcation.  These 
bifurcations seem very common and we have not made a systematic 
effort to track them.  This would make an interesting topic for a future study.

\begin{figure}[!t]
\centering
\includegraphics[width=6in]{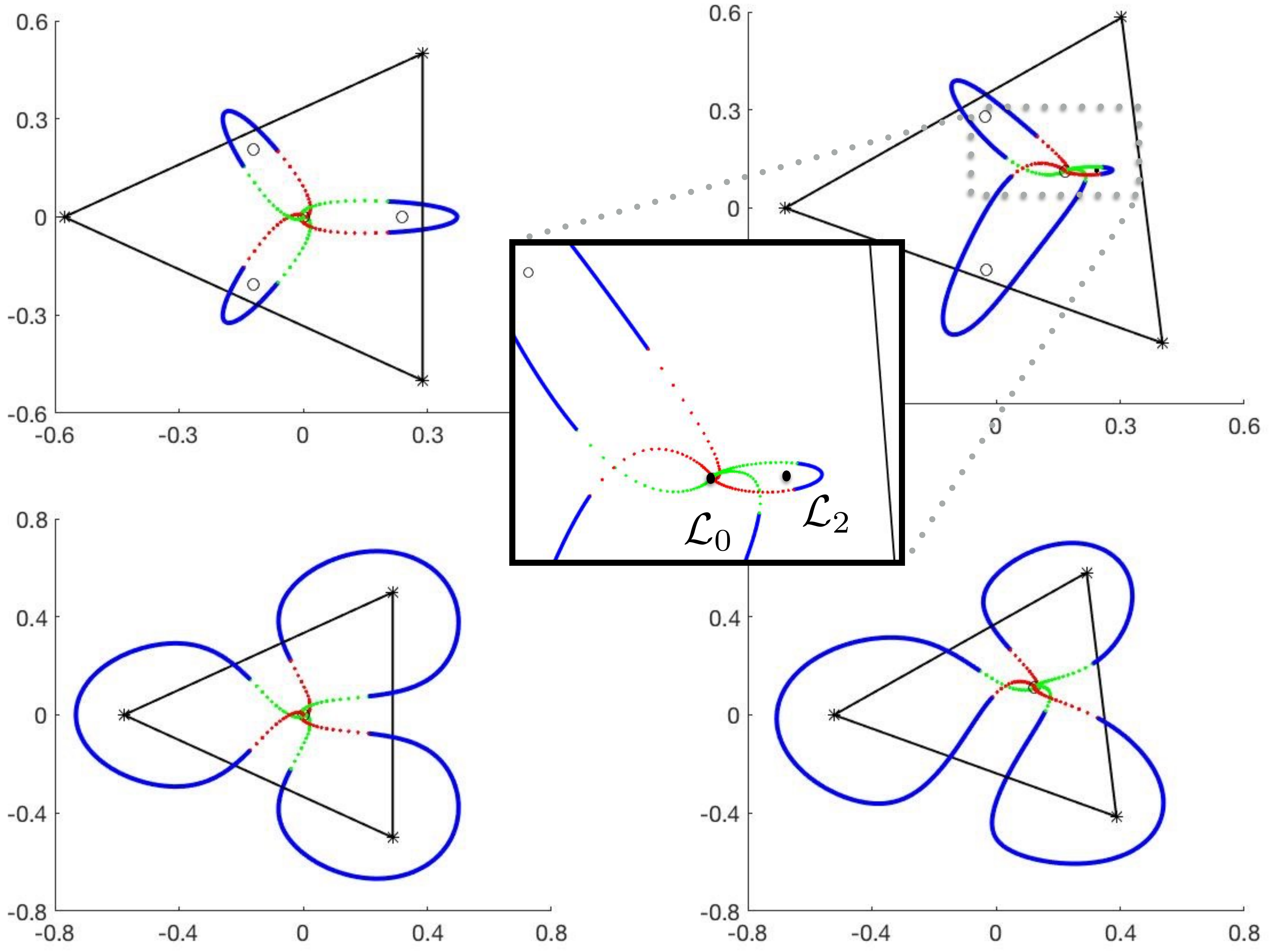}
\caption{\textbf{Continuation at $\mathcal{L}_0$:} of all six fundamental connecting orbits at
$\mathcal{L}_0$ and their symmetric counterparts.  Open circles in the top right frame
depict the locations of the inner libration points $\mathcal{L}_{1,2,3}$.
(Blue, green and red solid/dotted lines have the same meaning described in 
Figure \ref{fig:L0_letters}). Top/bottom left: fundamental connections in the 
triple Copenhagen problem -- equal masses. Top right: final result of numerically continuing
$L_{0A}$ and its symmetric counterparts
along the line in parameter space beginning at $m_1 = m_2 = m_3 = 1/3$
and ending at  $m_1 = 0.415,  m_2 = 0.3425, m_3 = 0.2425$--
close to where $\mathcal{L}_0$ loses  saddle-focus stability near $m_1 \approx 0.42$.
During the continuation $L_{0A^+}$ shrank substantially.  This is due to the 
fact that $L_{0A^+}$ winds around $\mathcal{L}_2$, which collides with $\mathcal{L}_0$
when $m_1 \approx 4.2$.  A close-up of the situation is illustrated in the 
center frame.  We observe that the connections $L_{0A}$ and $L_{0A^-}$ are deformed
much less dramatically.
Bottom right: the result of numerically continuing
$L_{0B}$ and its symmetric counterparts along the line in parameter space beginning at
$m_1 = m_2 = m_3 = 1/3$, and ending at $m_1 = 0.4$, $m_2 = 0.35$, $m_3 = 0.25$. 
These orbits are also deformed
less dramatically, though the loops do seem to decrease in size according to 
the loss of mass in the respective primary, with the largest loop around $m_1$ and 
the smallest around $m_3$.  All references to color refer to the online version.
}\label{fig:continuationAtL0}
\end{figure}

\begin{figure}[!t]
\centering
\includegraphics[width=6in]{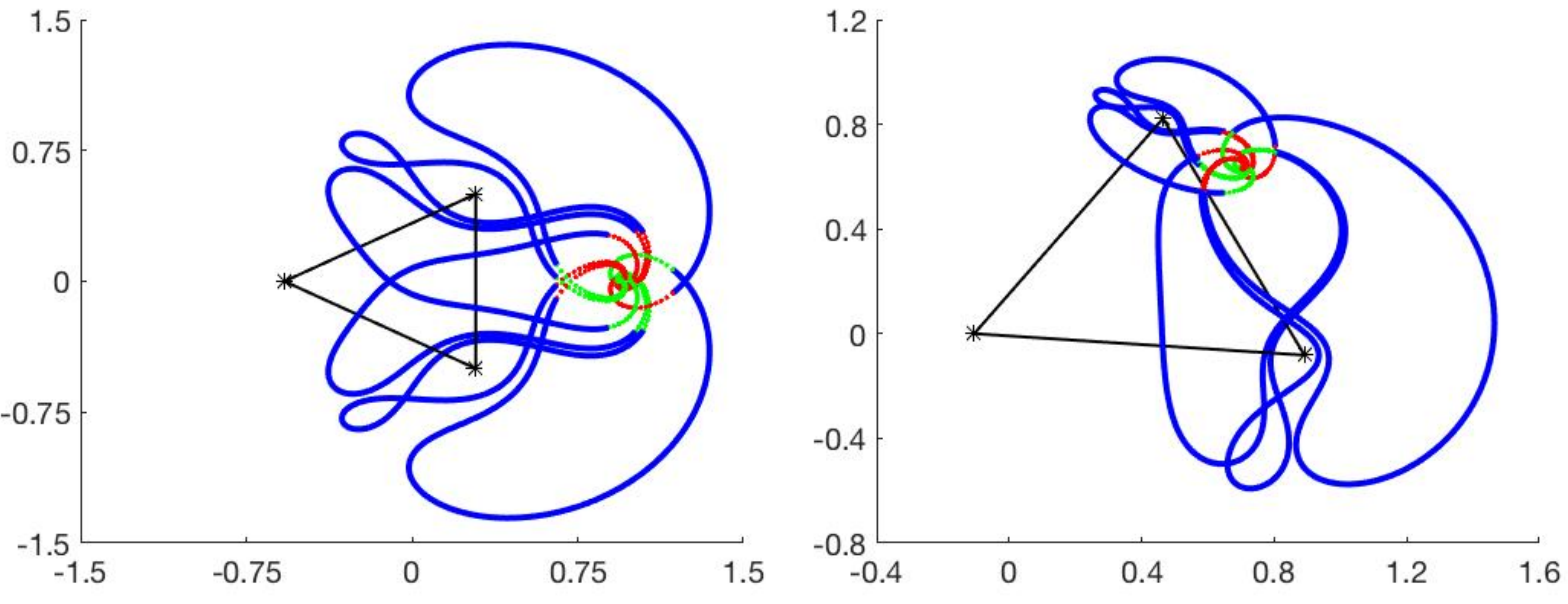}
\caption{\textbf{Continuation at $\mathcal{L}_5$}: continuation of the 
six basic homoclinic motions at $\mathcal{L}_5$, along the line in parameter
space beginning at $m_1 = m_2 = m_3 = 1/3$ and ending at
$m_1 = 0.89, m_2 =  0.1, m_3 =  0.01$.  (Blue, green and red solid/dotted 
lines have the same meaning described in Figure \ref{fig:L0_letters})  
So, starting from the triple Copenhagen problem, we deform until almost ninety 
percent of the mass is in the first primary body -- near the 
loss of saddle-focus stability of $\mathcal{L}_5$.  
Observe that the libration point $\mathcal{L}_5$ moves closer to the 
smallest primary $m_3$, and that the loops contract around the smallest
primary, a similar situation to that discussed in the caption of Figure \ref{fig:continuationAtL0}.
All references to color refer to the online version.
}\label{fig:continuationAtL5}
\end{figure}

\begin{figure}[!h]
\centering
\includegraphics[width=6in]{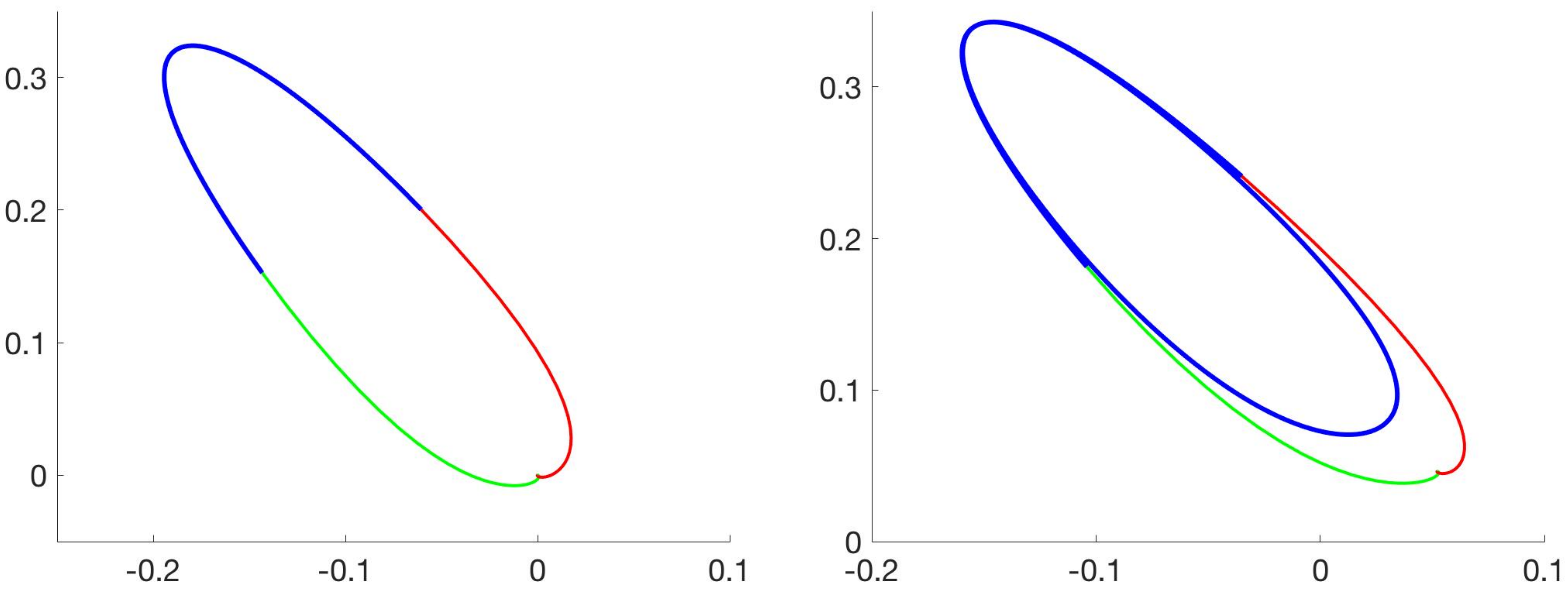}
\caption{\textbf{Illustration of a typical bifurcation:}
On the left is a homoclinic orbit for $\mathcal{L}_0$ in the 
$m_1 = m_2 = m_3 = 1/3$ case.  On the right is a connecting orbit
for the parameter values $m_1 = 0.3617$, $m_2 = 0.34042$ and 
$m_3 = 0.298$, just after a homoclinic doubling bifurcation
of the orbit on the left.  Both orbits persist after the bifurcation, 
that is it seems to be a pitchfork. The new homoclinic 
has a close ``flyby'' of $\mathcal{L}_0$ before making a second excursion 
and finally landing on the stable manifold.  In both frames the solid blue lines
represent the portion of the orbit represented by the boundary value problem, 
while the green and red are portions on the unstable and stable manifolds 
respectively.  All references to color refer to the online version.
}\label{fig:homoclinicDoubling}
\end{figure}


\section{Conclusions}
In this work we implemented a numerical method for computing an atlas for the stable/unstable 
manifold attached to a libration point in the CRFBP.  We consider saddle-focus equilibria,
as in this case topological theorems give rich dynamical structure near a transverse homoclinic. 
We then developed algorithms for searching or ``mining'' the atlas for approximate connections.
After an approximate connection is found we sharpen it using a Newton scheme for an appropriate
boundary value problem.  
The procedure is completely automated, and locates all the homoclinic orbits up to a 
specified integration time.  (To make the calculations less stiff we ignore orbits passing 
too close to the primaries).
The resulting collection of homoclinic orbits is ordered by connection time.
A different choice of local parameterization may yield different connection times,
but the ordering of these connections is universal. 
This last comment requires that the local stable/unstable manifolds are computed using the 
parameterizaiton method.

Our method locates dozens of distinct homoclinic connections and we consider their
qualitative properties in addition to their connection times.  The geometry of the homoclinic orbit set is 
organized by the several shortest connections, in the sense that they form 
a system of channels that other connections appear to follow.   
We decompose the motions of the longer connections into words built from these simple letters,
and discuss briefly how this decomposition could be calculated in an automated way 
using the formulation of the winding number as a complex line integral.  

Finally, we continue some of the orbits found in the equal mass case  
to other non-symmetric mass values using a predictor corrector scheme for 
the boundary value problem.  Rather than recomputing the entire atlas the continuation 
scheme only recomputes the initial parameterization at the new masses, and this 
can be done via a Newton scheme.  

We remark that it would be easy to intersect that atlas data computed here with any
desired surface of section.  We have not used sections in the present work
because (a) we wanted to find all the connections up to a certain integration time 
and a given section may find some orbits and not others, and (b)
projecting to a section may not preserve the ordering of the homoclinics by connection time. 
Moreover,  while the present project focuses on the CRFBP -- a two degree of freedom
Hamiltonian system -- the scheme described here could be extended to higher dimensional 
systems were sections are four or more dimensional and hence less useful for 
visualization purposes.  In such a situation, for example the spatial 
CRFBP, it is desirable to have an automated procedure.

Other interesting topics of future research would be to combine our methods with more sophisticated
continuation and branch following algorithms. 
It would also be nice to return to the 
ideas of Str\"{o}mgren, and examine the ``tubes'' of periodic orbits attached to each of our 
homoclinic connections.  These periodic families would themselves undergo bifurcations
which one could try to follow numerically.  

Another improvement to our method would be to remove the speed constraints
on our manifold computations.  This could be done by regularizing binary collisions.
The idea would be that whenever a chart gets too close to a primary, 
then instead of subdividing we would change to the regularized coordinates where
computations are less stiff.  This idea of using such regularizations 
to improve numerics goes back at least to the work of Thiele.  
This would also provide a natural way for computing collision orbits between $\mathcal{L}_{0, 5}$
and each of the primaries.  A topic we have avoided via our imposed speed constraints.
A modern implementation combined with our approach to computing atlases would be valuable,
and is the subject of ongoing work.

If such advancements let us compute larger and more complete atlases, a very interesting 
question is to see if other ``fundamental'' connecting orbits appear.  For example at $\mathcal{L}_0$
all the connections we find shadow two basic orbits $L_{0A}$, $L_{0B}$ and their symmetric counterparts.  
Is this true of all the connections?  Or is this simply an artifact of the fact that we only consider
connections whose velocity is never too large?  Will performing longer searches yield more 
fundamental letters for the alphabets at $\mathcal{L}_{0,5}$?

Of course with more computing power one could perform the atlas computations at more values of 
the mass parameters, say for a mesh of ten or twenty different points in the 
simplex $m_1 + m_2 + m_3 = 1$.  This would provide a more complete 
picture of the global orbit structure.  Such a project would greatly benefit form a cluster computing 
implementation exploiting the data independence of the computations at different parameter sets, and 
indeed the independence of different portions of the atlas at a given parameter set.  
Numerical continuation could then be applied to ``fill in the gaps'' between the 
mesh points.

\begin{acknowledgements}
The authors would like to sincerely thank two anonymous referees who carefully 
read the submitted version of the manuscript.  Their suggestions 
greatly improved the final version.
The second author was partially supported by NSF grant 
DMS-1813501.
Both authors were partially supported by NSF grant DMS-1700154 
and by  the Alfred P. Sloan Foundation grant G-2016-7320.
\end{acknowledgements}

\clearpage
\appendix

\section{Rotational symmetry for the equal mass case}
\label{sec:CRFBP_symmetry}
Let $m_1 = m_2 = m_3 = 1/3$ and 
$\theta = \frac{2 \pi }{3}$. Define the linear map, $\varphi : \rr^4 \to \rr^4$, by
\[
\varphi(x,\dot{x},y,\dot{y}) = 
\left(
\begin{array}{cccc}
\cos (\theta) & 0 & - \sin(\theta) & 0 \\
0 & \cos (\theta) & 0 & - \sin(\theta) \\
\sin(\theta) &  0 & \cos(\theta)  & 0 \\
0 & \sin(\theta) &  0 & \cos(\theta)  \\
\end{array}
\right)
\left(
\begin{array}{c}
x \\
\dot{x} \\
y \\
\dot{y}
\end{array}
\right)
=(\varphi_1, \varphi_2, \varphi_3, \varphi_4)^T.
\]
Note that $\varphi$ acts as a rotation by $\theta$ in the $(x,y)$ and $(\dot{x},\dot{y})$ coordinate planes independently. Now, suppose that $\mathbf{x}: \rr \to \rr^4$ is a trajectory for $f$, then $\tilde{\mathbf{x}} = \varphi \circ \mathbf{x}$ is also a trajectory for $f$. Moreover, 
if $\mathbf{x} \subset W^{s,u}(\mathcal{L}_i)$ for $i \in \{0,4,5,6\}$, then $\tilde{\mathbf{x}} \subset W^{s,u}(L_{\sigma(i)})$,  where $\sigma$ is the permutation given by $\sigma = (0)(4,5,6)$.
\begin{proof}
	Let $\hat x = (x,  \dot{x},y,\dot{y}) \in \rr^4$ and suppose $\mathbf{x}$ is the trajectory through $\hat x$ satisfying $\mathbf{x}(0) = \hat x$. By definition, $\tilde{\mathbf{x}}(0) = \varphi(\mathbf{x}(0)) = \varphi(\hat x)$, and we note that $\tilde{\mathbf{x}}$ will parameterize a trajectory for $f$ if and only if $\tilde{\mathbf{x}}(t)$ is tangent to $f(\tilde{\mathbf{x}}(t))$ for all $t \in \rr$. Thus, it clearly suffices to prove that $f \circ \varphi = \varphi \circ f$ holds for any $\hat x$ on $\rr^4$. 
	
	With this in mind, define the planar rotation $\eta: \rr^2 \to \rr^2$ by 
	\[
	\eta(x,y) =
	\left(
	\begin{array}{cc}
	\cos (\theta) & - \sin(\theta) \\
	\sin(\theta) & \cos(\theta)  \\
	\end{array}
	\right)
	\left(
	\begin{array}{c}
	x \\
	y 
	\end{array}
	\right)
	= 
	\left(
	\begin{array}{c}
	\eta_1(x,y) \\
	\eta_2(x,y) 
	\end{array}
	\right)
	\]
	Recall that for the symmetric mass case, we have equal masses given by $m_1 = m_2 = m_3 = \frac{1}{3}$. Set $m = \frac{1}{3}$, then the primaries are located at $P_1,P_2,P_3$ given by 
	\[
	P_1 = \left(-\frac{\sqrt{3}}{3}, 0 \right) \quad P_2 = \left(\frac{\sqrt{3}}{6}, -\frac{1}{2} \right) \quad P_3 = \left(\frac{\sqrt{3}}{6}, \frac{1}{2} \right)
	\]
	and note that $\norm{P_1} = \norm{P_2} = \norm{P_3} = \frac{1}{\sqrt{3}}$. Moreover, $P_1,P_2,P_3$ are vertices of an equilateral triangle and a direct computation shows that 
	$\eta$ acts as a cyclic permutation on the primary bodies in configuration space given by the cycle $\pi = (1,2,3)$.  Recalling that $r_i(x,y) = \sqrt{(x-x_i)^2 + (y-y_i)^2} = \norm{(x,y) - P_i}$, it follows from this symmetry that for $i \in \{1,2,3\}$ we have
	\begin{equation}
		\label{eq:r_compose_eta}
	r_i \circ \eta(x,y) = \norm{\eta(x,y) - P_i} = \norm{(x,y) - P_{\pi^{-1}(i)}} = r_{\pi^{-1}(i)}.
	\end{equation}
	Now, we recall that in the symmetric case, the CRFBP vector field is given by
	\[
	f(x,\dot{x},y,\dot{y}) = 
	\left(
	\begin{array}{c}
	\dot{x} \\
	2\dot{y} + x - \frac{1}{3}\sum_{i=1}^{3} \frac{x - x_i}{r_i} \\
	\dot{y} \\
	-2\dot{x} + y - \frac{1}{3}\sum_{i=1}^{3} \frac{y - y_i}{r_i} 
	\end{array}
	\right).
	\]
	which we write in scalar coordinates as $f = \left(f_1,f_2,f_3,f_4\right)$. Similarly, write $\varphi = \left(\varphi_1,\varphi_2,\varphi_3,\varphi_4 \right)$ and we note that $(\varphi_1(\hat x),\varphi_3(\hat x)) = \eta(x,y)$. Now, we check that $f_i \circ \varphi = \varphi_i \circ f$ holds for each $i \in \{1,2,3,4\}$. For $i =1$, we have the direction computation
	\[
	\varphi_1 \circ f \left(\hat x\right) = \dot{x} \cos(\theta) - \dot{y} \sin(\theta) = f_1 \circ \varphi \left(\hat x\right).
	\]
	Now, for $i = 2$ we first compute each expression
	\begin{align*}
	\varphi_2 \circ f \left(\hat x\right) & = \left(2 \dot{y} + x - \frac{1}{3}\sum_{i=1}^{3} \frac{x - x_i}{r_i(x,y)}\right) \cos(\theta) 
	- \left(-2\dot{x} + y - \frac{1}{3} \sum_{i=1}^{3} \frac{y - y_i}{r_i(x,y)}\right) \sin(\theta)   \\
	f_2 \circ \varphi \left(\hat x\right) & = 2(\dot{x} \sin(\theta) + \dot{y} \cos(\theta)) + x \cos(\theta) - y \sin(\theta) - \frac{1}{3} \sum_{i = 1}^{3} \frac{\eta_1(x,y) - x_i}{r_i \circ \eta (x,y)}.
	\end{align*}
	After canceling like terms in each expression, we are left to prove the following equality
	\begin{equation}
	\label{eq:reduced_equality}
	\sum_{i = 1}^{3} \frac{\eta_1(x,y) - x_i}{r_i \circ \eta(x,y)} = \cos(\theta) \sum_{i = 1}^{3} \frac{x - x_i}{r_i(x,y)} - \sin(\theta) \sum_{i=1}^{3} \frac{y - y_i}{r_i(x,y)}.
	\end{equation}
	Applying the result from \eqref{eq:r_compose_eta} to the left side we have 
	\[
	\sum_{i = 1}^{3} \frac{\eta_1(x,y) - x_i}{r_{\pi^{-1}(i)}} = \frac{\eta_1(x,y) - x_1}{r_3(x,y)} + \frac{\eta_1(x,y) - x_2}{r_1(x,y)} + \frac{\eta_1(x,y) - x_3}{r_2(x,y)}
	\]
	so that for each $i \in \{1,2,3\}$, the numerator for $r_i$ is given by $\eta_1(x,y) - x_{\pi(i)}$. Now, we compute the numerators for $r_i(x,y)$ on the right hand side as 
	\[
	\cos(\theta)(x-x_i) - \sin(\theta)(y-y_i) = \eta_1(x,y) - \eta_1(x_i,y_i) = \eta_1(x,y) - x_{\pi(i)}.
	\]
	We conclude that the numerators for each $r_i$ are equal, and therefore,  the equality in \eqref{eq:reduced_equality} holds which proves that $\varphi_2 \circ f = f_2 \circ \varphi$. The proofs for the $i = 3,4$ cases are computationally similar to the corresponding proofs for $i = 1,2$ which concludes the proof that $f \circ \varphi = \varphi \circ f$, or equivalently, $\tilde{\mathbf{x}}$ is a trajectory for $f$. 
	
	To prove the second claim, fix $i \in \{0,4,5,6\}$ and suppose $\mathbf{x}(t) \to L_i$ as $t \to \infty$ implying that  $\mathbf{x} \subset W^s(L_i)$. Let $\tilde{\mathbf{x}} = \varphi(\mathbf{x})$, and note that $L_i$ is an equilibrium solution for $f$ implying that $\mathbf{x}_{2,4}(t) \to 0$. Noting that $\eta$ is a unitary operator, it follows that $\tilde{\mathbf{x}}_{2,4}(t) \to 0$ as well.  
	 Moreover, $\varphi$ is a dynamical conjugacy implying that in configuration space we have  
	\[
	\lim\limits_{t \to \infty} \tilde{\mathbf{x}}_{1,3}(t) = \lim\limits_{t \to \infty} \eta\left(x(t),y(t)\right) = \eta(L_i).
	\]
	Taken together it follows that $\eta(L_i)$ is again an equilibrium solution for $f$. Thus, $\eta$ acts as a permutation on equilibria. A direct computation shows that $\eta(L_i) = L_{\sigma(i)}$ where $\sigma$ is the permutation given by $\sigma = (0)(4,5,6)$. The preceding argument applies equally well to the unstable manifold of each equilibrium with $t \to -\infty$ which completes the proof of the second claim. 
\end{proof}

\section{Power series manipulation, automatic differentiation, and the radial gradient} \label{sec:formalSeries}
Our local invariant manifold computations are based on formal power series manipulations. 
The main technical challenge is to 
compute $f \circ P$ with $P$ an arbitrary power series 
and $f$ the vector field for the CRFBP.  
As usual in gravitational $N$ body problems, the nonlinearity contains 
terms raised to the minus three halves power. 

Consider two formal power series $P, Q \colon \mathbb{C}^2 \to \mathbb{C}$
given by 
\[
P(z_1, z_2) = \sum_{m=0}^\infty \sum_{n=0}^\infty 
a_{m,n} z_1^m z_2^n, 
\quad \quad \mbox{and} \quad \quad 
Q(z_1, z_2) = \sum_{m=0}^\infty \sum_{n=0}^\infty 
b_{m,n} z_1^m z_2^n,
\]
where $a_{m,n}, b_{m,n} \in \mathbb{C}$ for all $(m,n) \in \mathbb{N}^2$.
The collection of all formal power series forms a complex vector space, so that 
for any $\alpha, \beta \in \mathbb{C}$ we have that 
\[
(\alpha P+ \beta Q)(z_1, z_2) = \sum_{m=0}^\infty \sum_{n=0}^\infty
\left(\alpha a_{m,n} + \beta b_{m,n}\right)  z_1^m z_2^n.
\]
The collection becomes an algebra when endowed with 
the Cauchy product 

\begin{equation}
\label{eq:cauchy_product}
(P \cdot Q)(z_1, z_2) = 
 \sum_{m=0}^\infty \sum_{n=0}^\infty
 \left(\sum_{j=0}^m \sum_{k=0}^n
a_{m-j,n-k} b_{j k} \right) \, z_1^m z_2^n.
\end{equation}

We evaluate elementary functions of formal power series 
using a technique called \textit{automatic differentiation} by many authors.  
Suppose for example we are given a formal series 
\[
P(z_1, z_2) = \sum_{m=0}^\infty \sum_{n=0}^\infty
p_{m,n}
z_1^m z_2^n,
\]
with $p_{0,0} \neq 0$.  We seek the formal series coefficients $q_{m,n}$ of the function 
\[
Q(z_1, z_2) = 
\sum_{m=0}^\infty \sum_{n=0}^\infty
q_{m,n}
z_1^m z_2^n = 
P(z_1, z_2)^{\alpha}, \quad \quad \quad \alpha \in \mathbb{R}.
\]
%

Our approach follows the discussion given by Alex Haro in 
\cite{mamotreto}. Consider the 
first order partial differential operator 
\[
\nabla_{\mbox{\tiny rad}} P(z_1, z_2) = \nabla P(z_1, z_2) \left(
\begin{array}{c}
z_1 \\
z_2
\end{array}
\right) = 
z_1 \frac{\partial}{\partial z_1} P(z_1, z_2) + z_2 \frac{\partial}{\partial z_2} P(z_1, z_2),
\]
which is referred to as \textit{the radial gradient} of $P$.
Evaluating on the level of formal power series leads to 
\[
\nabla_{\mbox{\tiny rad}} P(z_1, z_2) 
= \sum_{m=0}^\infty \sum_{n=0}^\infty (m+n) p_{m,n} z_1^m z_2^n.
\]
Observe that 
\begin{align*}
\nabla_{\mbox{\tiny rad}} Q(z_1, z_2) &= 
\nabla  Q(z_1, z_2) \left(
\begin{array}{c}
z_1 \\
z_2
\end{array}
\right) \\
&= \nabla P(z_1, z_2)^\alpha \left(
\begin{array}{c}
z_1 \\
z_2
\end{array}
\right) \\
&= \alpha P(z_1, z_2)^{\alpha - 1} \nabla P(z_1, z_2) \left(
\begin{array}{c}
z_1 \\
z_2
\end{array}
\right).
\end{align*}
Multiplying both sides of the equation by $P$ we obtain
\begin{equation} \label{eq:radialGradInvEq}
P(z_1, z_2) \nabla
 Q(z_1, z_2) \left(
\begin{array}{c}
z_1 \\
z_2
\end{array}
\right) = \alpha Q(z_1, z_2) \nabla
P(z_1, z_2) \left(
\begin{array}{c}
z_1 \\
z_2
\end{array}
\right).
\end{equation}
Here the fractional power is replaced by 
operations involving only differentiation and multiplication.  
This is the virtue of the radial gradient in automatic differentiation schemes.
Plugging the power series expansions into Equation \eqref{eq:radialGradInvEq}
leads to 
\[
\left(  
\sum_{m=0}^\infty \sum_{n=0}^\infty p_{m,n} z_1^m z_2^n
\right)
\left(
\sum_{m=0}^\infty \sum_{n=0}^\infty 	(m+n) q_{m,n} z_1^m z_2^n
\right) =
\]
\[
\left(  
\sum_{m=0}^\infty \sum_{n=0}^\infty \alpha q_{m,n} z_1^m z_2^n
\right)
\left(
\sum_{m=0}^\infty \sum_{n=0}^\infty 	(m+n) p_{m,n} z_1^m z_2^n
\right),
\]
and taking Cauchy products gives 
\[
\sum_{m=0}^\infty \sum_{n=0}^\infty
\sum_{j=0}^m \sum_{k=0}^n  (j+k) p_{m-j, n-k} q_{j,k} z_1^m z_2^n
\]
\[
 =
 \sum_{m=0}^\infty \sum_{n=0}^\infty
\sum_{j=0}^m \sum_{k=0}^n  \alpha (j+k) q_{m-j, n-k} p_{j,k} z_1^m z_2^n.
\]
Match like powers to get
\[
\sum_{j=0}^m \sum_{k=0}^n  (j+k) p_{m-j, n-k} q_{j,k} = 
\sum_{j=0}^m \sum_{k=0}^n  \alpha (j+k) q_{m-j, n-k} p_{j,k},
\]
or 
\[
(m+n) p_{0,0} q_{m,n} + \sum_{j=0}^m \sum_{k=0}^n  \hat{\delta}_{j,k}^{m,n} (j+k) p_{m-j, n-k} q_{j,k}
\]
\[
= \alpha (m+n) q_{0,0} p_{m,n} + \sum_{j=0}^m \sum_{k=0}^n \hat{\delta}_{j,k}^{m,n} \alpha (j+k) q_{m-j, n-k} p_{j,k},
\]
for $m + n \geq 1$.  Here 
\[
\hat{\delta}_{j,k}^{m,n} := \begin{cases}
0 & \mbox{if } j = m   \mbox{ and } k = n \\
0 & \mbox{if } j = 0  \mbox{ and } k = 0 \\
1 & \mbox{otherwise}
\end{cases}.
\]
The $\hat \delta$ appears to remind us that terms of 
order $(m,n)$ are extracted from the sum.   
Isolating $q_{m,n}$ gives 
\begin{equation} \label{eq:recForAlphaPower}
q_{m,n} = \alpha p_{0,0}^{\alpha-1} p_{m,n} + \frac{1}{(m+n) p_{0,0}} 
\sum_{j=0}^m \sum_{k=0}^n \hat{\delta}_{j,k}^{m,n} (j+k)
\left(
\alpha q_{m-j, n-k} p_{j,k} - p_{m-j, n-k} q_{j,k}
\right), 
\end{equation}
for $m + n \geq 1$.  Note that $q_{0,0} = p_{0,0}^{\alpha} \neq 0$ by 
hypothesis, so that the coefficients $q_{m,n}$ are formally well defined to all orders.    
Using the recursion given in Equation \eqref{eq:recForAlphaPower}
we can compute the formal series coefficients for $Q$ for the cost of a 
Cauchy product.  
This allows us to compute power series representations for the nonlinear terms in 
$f(P)$ and $Df(P)$ in the CRFBP.  
Another approach which converts the CRFB field to a higher dimensional polynomial 
filed in discussed in \cite{shaneAndJay}.

\section{Conflict of interest statement}
The authors of this manuscript certify that they have no affiliations with or involvement in any
organization or entity with any financial interest (such as honoraria; educational grants,
 participation in speaker's bureaus,
membership, employment, consultancies, stock ownership, 
or other equity interest, and expert testimony or patent-licensing
arrangements), or non-financial interest 
(such as personal or professional relationships, affiliations, knowledge or beliefs) in
the subject matter or materials discussed in this manuscript.


\end{document}